\newtheorem{thm}{Theorem}[section]
\newtheorem{lma}[thm]{Lemma}
\newtheorem{cor}[thm]{Corollary}
\newtheorem{clm}{Claim}
\renewcommand{\bar}{\overline}
\renewcommand{\tilde}{\widetilde}
\theoremstyle{remark}
\newtheorem{rmk}[thm]{Remark}
\theoremstyle{definition}
\newtheorem{dfn}[thm]{Definition}
\newtheorem{exm}[thm]{Example}
\newcommand{\mb}[1]{\mathbf{#1}}
\newcommand{\CC}{\mb{C}}
\newcommand{\PP}{\mb{P}}
\newcommand{\QQ}{\mb{Q}}
\newcommand{\RR}{\mb{R}}
\newcommand{\ZZ}{\mb{Z}}
\newcommand{\cp}[1]{\CC\PP^{#1}}
\newcommand{\rt}{\bm{\mu}}
\newcommand{\TO}[3]{
#1\xrightarrow{#2} #3
}
\newcommand{\eE}{\mathcal{E}}
\newcommand{\orb}[1]{\skew{3}\hat{#1}}
\mathchardef\ordinarycolon\mathcode`\:
\newcommand*{\da@rightarrow}{\mathchar"0\hexnumber@\symAMSa 4B }
\newcommand*{\da@leftarrow}{\mathchar"0\hexnumber@\symAMSa 4C }
\newcommand*{\xdashrightarrow}[2][]{%
  \mathrel{%
    \mathpalette{\da@xarrow{#1}{#2}{}\da@rightarrow{\,}{}}{}%
  }%
}
\newcommand{\xdashleftarrow}[2][]{%
  \mathrel{%
    \mathpalette{\da@xarrow{#1}{#2}\da@leftarrow{}{}{\,}}{}%
  }%
}
\newcommand*{\da@xarrow}[7]{%
  \sbox0{$\ifx#7\scriptstyle\scriptscriptstyle\else\scriptstyle\fi#5#1#6\m@th$}%
  \sbox2{$\ifx#7\scriptstyle\scriptscriptstyle\else\scriptstyle\fi#5#2#6\m@th$}%
  \sbox4{$#7\dabar@\m@th$}%
  \dimen@=\wd0 %
  \ifdim\wd2 >\dimen@
    \dimen@=\wd2 %
  \fi
  \count@=2 %
  \def\da@bars{\dabar@\dabar@}%
  \@whiledim\count@\wd4<\dimen@\do{%
    \advance\count@\@ne
    \expandafter\def\expandafter\da@bars\expandafter{%
      \da@bars
      \dabar@ 
    }%
  }%
  \mathrel{#3}%
  \mathrel{%
    \mathop{\da@bars}\limits
    \ifx\\#1\\%
    \else
      _{\copy0}%
    \fi
    \ifx\\#2\\%
    \else
      ^{\copy2}%
    \fi
  }%
  \mathrel{#4}%
}
\title[Markov numbers and Lagrangian cell complexes]{Markov numbers and Lagrangian cell complexes\\ in the complex projective plane}
\author{Jonathan David Evans}
\address{Jonathan Evans\\ Department of Mathematics\\ University College London\\ Gower Street\\ London\\ WC1E 6BT\\ United Kingdom}
\email{j.d.evans@ucl.ac.uk}
\author{Ivan Smith}
\address{Ivan Smith\\ Centre for Mathematical Sciences\\ University of Cambridge\\ Wilberforce Road\\ CB3 0WB\\ United Kingdom.}
\email{is200@cam.ac.uk}
\dedicatory{\begin{center}To the European Union, with sincere regret.\end{center}}
\begin{document}

\begin{abstract}
  We study Lagrangian embeddings of a class of two-dim\-en\-s\-ion\-al cell complexes $L_{p,q}$ into the complex projective plane. These cell complexes, which we call {\em pinwheels}, arise naturally in algebraic geometry as vanishing cycles for quotient singularities of type $\frac{1}{p^2}(pq-1,1)$ (Wahl singularities). We show that if a pinwheel admits a Lagrangian embedding into $\cp{2}$ then $p$ is a Markov number and we completely characterise $q$. We also show that a collection of Lagrangian pinwheels $L_{p_i,q_i}$, $i=1,\ldots,N$, cannot be made disjoint unless $N\leq 3$ and the $p_i$ form part of a Markov triple. These results are the symplectic analogue of a theorem of Hacking and Prokhorov, which classifies complex surfaces with quotient singularities admitting a $\QQ$-Gorenstein smoothing whose general fibre is $\cp{2}$.
\end{abstract}

\maketitle

\section{Introduction}

\begin{dfn}
  Let $p_1,p_2,p_3$ be positive integers. The triple $(p_1,p_2,p_3)$ is a {\em Markov triple} if
  \begin{equation}\label{eq:markov}
    p_1^2+p_2^2+p_3^2=3p_1p_2p_3.
  \end{equation}
\end{dfn}

A theorem of Hacking and Prokhorov \cite{HackingProkhorovDP,HackingProkhorov} asserts that if $X$ is a projective algebraic surface with quotient singularities which admits a $\QQ$-Gorenstein smoothing to the complex projective plane $\cp{2}$, then $X$ is obtained by partially smoothing a weighted projective plane $\CC\PP(p_1^2,p_2^2,p_3^2)$ for a Markov triple $(p_1,p_2,p_3)$. In particular, $X$ has at most $3$ singular points (this was also known earlier to Manetti \cite{Manetti}). Koll\'ar \cite{Kollar} has asked to what extent such theorems, which put constraints on orbifold degenerations of algebraic surfaces, are purely topological. More precisely, he conjectures that any compact smooth 4-manifold $M$ with $H_1(M;\ZZ)=0$ and $H_2(M;\ZZ)=\ZZ$ can have at most five boundary components with finite but nontrivial fundamental group. The relation between Koll\'ar's conjecture and degenerations is that such a 4-manifold could be obtained by excising neighbourhoods of the singularities in a degeneration of a homology $\cp{2}$, so this conjecture would provide a topological upper bound on the number of orbifold singularities. This conjecture is motivated by the orbifold Bogomolov-Miyaoka-Yau inequality (which provides the upper bound on the number of singularities, but relies on some nontrivial algebraic geometry).

In this paper, we show that the Hacking-Prokhorov theorem has a purely symplectic analogue (Theorem \ref{thm:markov} below). This can be interpreted as giving constraints on the topology and displaceability of Lagrangian embeddings of certain simple cell complexes in $\cp{2}$. We do not know if this can be further weakened to use only methods of differential topology as Koll\'ar conjectures.

An interesting feature of the algebraic surfaces above is that the cyclic quotient singularities $\frac{1}{p^2}(1,pq-1)$ which appear have vanishing Milnor number; the local smoothing of the singularity is a rational homology ball $B_{p,q}$.  From a symplectic perspective, $B_{p,q}$ is naturally a Stein domain, and retracts to a Lagrangian skeleton $L_{p,q}$ which we call a \emph{pinwheel}, homotopy equivalent to the Moore space $M(\ZZ/p, 1)$.  (In the simplest example $B_{2,1} \cong T^*\RR\PP^2$ and $L_{2,1} \cong \RR\PP^2$.)  One can formulate potential symplectic counterparts to (strengthenings of) the Hacking-Prokhorov theorem  by asking for constraints on four-dimensional symplectic orbifolds which can be smoothed to $\PP^2$, or by asking about symplectic embeddings $\amalg_i \, B_{p_i,q_i} \subset \PP^2$, or embeddings of the corresponding Lagrangian skeleta. Symplectic embedding questions for rational balls were first considered by Khodorovskiy \cite{KhodorovskiyThesis,KhodorovskiyBounds,KhodorovskiyQBU}; see also \cite{LM}. 

\begin{thm}\label{thm:markov}
  Let $N$ be a positive integer and $B_{p_i,q_i}\subset\cp{2}$, $i=1,\ldots,N$, be a collection of symplectic embeddings having pairwise disjoint images. Equivalently, let $L_{p_i,q_i}\subset\cp{2}$ be a collection of pairwise disjoint Lagrangian pinwheels. Then $N\leq 3$ (Corollary \ref{cor:fourball}). Moreover:
  \begin{enumerate}
  \item[A.] (Theorem \ref{thm:oneball}) If $N=1$ then $p_1$ belongs to a Markov triple $(p_1,b,c)$. Moreover, $q_1=\pm 3b/c\mod p_1$.
  \item[B.] (Theorem \ref{thm:twoball}) If $N=2$ then $p_1$ and $p_2$ belong to a Markov triple $(p_1,p_2,c)$. Moreover, $q_1=\pm 3p_2/c\mod p_1$ and $q_2=\pm 3p_1/c\mod p_2$.
  \item[C.] (Theorem \ref{thm:threeball}) If $N=3$ then $(p_1,p_2,p_3)$ is a Markov triple. Moreover, $q_i=\pm 3p_j/p_k\mod p_i$ where $i,j,k$ is a permutation of $1,2,3$.
  \end{enumerate}
\end{thm}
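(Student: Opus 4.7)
The plan is to mirror the Hacking-Prokhorov classification of $\QQ$-Gorenstein degenerations of $\cp{2}$ in symplectic geometry, by degenerating $\cp{2}$ to a symplectic orbifold and running pseudoholomorphic curve analysis on the result. Given pairwise disjoint symplectic embeddings $\amalg_i B_{p_i,q_i}\subset\cp{2}$, I would first collapse each rational ball to its Wahl singularity, producing a symplectic orbifold $(X,\omega_X)$ with $N$ cyclic quotient singularities of type $\frac{1}{p_i^2}(1,p_iq_i-1)$ whose simultaneous $\QQ$-Gorenstein smoothing returns the original $\cp{2}$. Equivalently, one can work directly with the symplectic cobordism $W=\cp{2}\setminus\amalg_i \mathrm{int}(B_{p_i,q_i})$ whose concave ends are the lens spaces $L(p_i^2,p_iq_i-1)$, and cap off with the symplectic fillings realising the Milnor fibre smoothings.

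The principal analytical input is the 2-parameter family of pseudoholomorphic lines in $\cp{2}$: embedded rational curves of symplectic area $[\omega]$ and self-intersection $+1$. Stretching the neck along each lens-space boundary and invoking SFT-type compactness, each line in the family degenerates into a holomorphic building, with top-level components in $W$ asymptotic to Reeb orbits on $\amalg_i L(p_i^2,p_iq_i-1)$ and bottom-level components in the symplectic fillings. Positivity of intersection and automatic regularity in dimension four severely restrict the combinatorial type of the resulting building. Since the homology classes of the asymptotic orbits lie in $H_1(L(p_i^2,p_iq_i-1);\ZZ)=\ZZ/p_i^2$, bookkeeping of multiplicities, rotation numbers and symplectic areas yields an arithmetic identity which, once constrained by the ambient hyperplane class and by the fact that the line must pass through a generic pair of points, reduces precisely to the Markov equation at each appearing triple of singular points.

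For the bound $N\leq 3$, I would combine a Betti-number count on $X$ (Wahl singularities contribute nothing to $H_2$, so $b_2(X)=b_2(\cp{2})=1$) with positivity of the orbifold Euler characteristic $\chi^{\mathrm{orb}}(X)=3-\sum_i(1-1/p_i^2)$ and the exponential growth of the Markov numbers, ruling out $N\geq 4$. To pin down the $q_i$ congruences, I would use the explicit contact geometry of $L(p_i^2,p_iq_i-1)$ together with the monodromy of the Milnor fibration for the Wahl singularity: the rotation number of a pseudoholomorphic line wrapping once around a singular point determines $q_i$ modulo $p_i$ up to the ambiguity $\pm 3p_j/p_k$, and matching against the Markov weighting at the appearing triple pins down the sign ambiguity.

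The central obstacle is the SFT compactness and transversality analysis together with the classification of possible degenerations of holomorphic lines through orbifold points; ruling out accidental contributions from multiply-covered components, or from a single line straddling more than one singularity, will require delicate index calculations and careful use of automatic regularity. A secondary obstacle is the explicit rotation-number computation on the lens-space ends producing the signed $q_i$ formula, which depends on matching conventions between the symplectic and algebraic descriptions of the Wahl singularity.
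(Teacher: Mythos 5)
Your high-level strategy — collapse the rational balls to Wahl singularities, stretch the neck along the lens-space boundaries, and analyse SFT limits of the 2-parameter family of lines — is exactly the approach the paper takes. However, there are two genuine gaps that prevent the outline from closing.

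\textbf{The bound $N\leq 3$.} You propose deducing $N\leq 3$ from positivity of the orbifold Euler characteristic $\chi^{\mathrm{orb}}(\orb{X}) = 3 - \sum_i\bigl(1-1/p_i^2\bigr)$. But there is no a priori reason, in the symplectic category, that a symplectic orbifold obtained by collapsing rational homology balls in $\cp{2}$ should satisfy $\chi^{\mathrm{orb}}>0$. That inequality is precisely the orbifold Bogomolov--Miyaoka--Yau inequality, which is a nontrivial theorem of algebraic (or K\"ahler) geometry, and it is exactly the tool the paper's introduction identifies as \emph{the} obstacle to making the Hacking--Prokhorov theorem purely topological (this is the content of Koll\'ar's conjecture cited there). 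Invoking it here is circular. The paper's actual argument for $N\leq 3$ (Corollary~\ref{cor:fourball}) is entirely different and elementary: Theorem~\ref{thm:threeball} shows that any three of the $p_i$ form a Markov triple, so $N\geq 4$ would force the Markov tree to contain a copy of $K_4$, contradicting the fact that it is a tree.

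\textbf{Producing the Markov equation.} You write that ``bookkeeping of multiplicities, rotation numbers and symplectic areas yields an arithmetic identity which \ldots reduces precisely to the Markov equation.'' This bookkeeping is the heart of the proof and it is not indicated how it works. The paper's mechanism is W.\ Chen's orbifold adjunction formula \eqref{eq:adj}, applied together with his orbifold virtual-dimension formula \eqref{eq:vdim}. The adjunction formula is first used (Lemma~\ref{lma:Z12}) to show that a low-degree orbifold limit curve meets the singular set in at most two orbifold points, which is needed to rule out complicated degenerations — a step not addressed in your outline. Then, for a one-dimensional family of $|Z|=1$ curves, the local branching data $(Q,R_1)$ of the lift to the uniformising cover are constrained by a positivity-of-intersections argument (Lemma~\ref{lma:dsq}) to satisfy $D^2/\Delta^2 = (p^2/d_z)^2 QR_1/p^2$ and $\gcd(Q,R_1)=1$, so that $Q$ and $R_1$ are perfect squares $b^2,c^2$. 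Substituting into orbifold adjunction gives $d_z + b^2 + c^2 = 3p_1 bc$, and — crucially — a theorem of Rosenberger on Markov-type Diophantine equations $x^2+y^2+tz^2=3txyz$ then forces $d_z=p_1^2$, yielding the Markov equation. Neither the orbifold adjunction formula nor the Rosenberger input appears in your proposal, and without them the ``arithmetic identity'' cannot be derived. (For the $q_i$ congruences, the paper then feeds the local weights of the branching data into the virtual-dimension formula; ``rotation number of a pseudoholomorphic line'' is a reasonable gloss, but the actual computation uses the relation $R_1 \equiv Q(\pm p q - 1)\bmod |H_z|$ from the equivariance condition, Remark~\ref{rmk:equiv}.)
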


\begin{cor} We have the following consequences for Lagrangian embeddings.
  \begin{enumerate}
  \item[A.] If $p$ is not a Markov number, or if $p$ is Markov but $q\neq \pm 3b/c$ for any $b,c<p$ with $p^2+b^2+c^2=3pbc$, then there is no Lagrangian embedding of $L_{p,q}$ into $\cp{2}$. In particular, if $q^2\neq -9\mod p$ then there is no Lagrangian embedding of $L_{p,q}$ into $\cp{2}$.
  \item[B.] If $p$ and $p'$ are Markov numbers but do not form part of a Markov triple, and $L_{p,q}$ and $L_{p',q'}$ are Lagrangian pinwheels in $\cp{2}$ then they cannot be disjoined from one another by a Hamiltonian isotopy.
  \end{enumerate}
\end{cor}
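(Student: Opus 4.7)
The plan is to derive both parts of the corollary directly from Theorem \ref{thm:markov}, together with a short modular-arithmetic computation for the ``in particular'' clause of Part A.

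For Part A, suppose $L_{p,q}$ admits a Lagrangian embedding into $\cp{2}$. Applying Theorem \ref{thm:markov}.A with $N=1$, and invoking the equivalence stated in the theorem between Lagrangian pinwheel embeddings and symplectic embeddings of $B_{p,q}$, forces $p$ to belong to a Markov triple $(p,b,c)$ with $q\equiv\pm 3b/c\pmod{p}$. Taking the contrapositive yields the first sentence of Part A. For the ``in particular'' clause, I would use the classical fact that the entries of any Markov triple are pairwise coprime (an elementary descent argument using Hurwitz's theorem that $x^2+y^2+z^2=kxyz$ has positive integer solutions only for $k\in\{1,3\}$), so that $c$ is invertible modulo $p$. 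Reducing the Markov equation $p^2+b^2+c^2=3pbc$ modulo $p$ gives $b^2\equiv -c^2\pmod{p}$, hence $(b/c)^2\equiv -1\pmod{p}$. Any admissible $q\equiv\pm 3b/c\pmod{p}$ therefore satisfies $q^2\equiv -9\pmod{p}$, and the contrapositive completes the proof.

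For Part B, suppose $L_{p,q}$ and $L_{p',q'}$ could be disjoined by a Hamiltonian isotopy. After applying that isotopy we would obtain two disjoint Lagrangian pinwheels in $\cp{2}$, and Theorem \ref{thm:markov}.B (the case $N=2$) would force $p$ and $p'$ to lie in a common Markov triple $(p,p',c)$, directly contradicting the hypothesis.

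The corollary is essentially a direct reformulation of Theorem \ref{thm:markov}, so there is no substantial obstacle to overcome here: the entire weight of the argument lies in Theorem \ref{thm:markov} itself, supplemented only by the brief number-theoretic identity $(b/c)^2\equiv -1\pmod{p}$ derived above.
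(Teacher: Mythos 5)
Your proof is correct and takes essentially the same approach as the paper: both parts follow directly from Theorem \ref{thm:markov} A and B by contraposition. The only difference is that you derive the identity $q^2\equiv -9 \pmod{p}$ from scratch (via pairwise coprimality of Markov triple entries and reduction of $p^2+b^2+c^2=3pbc$ modulo $p$), whereas the paper simply cites this standard fact from Cassels.
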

\begin{proof}
  This is immediate from Theorem \ref{thm:markov} A and B. The only nontrivial observation is that if $q=\pm 3b/c$ for some Markov triple $(p,b,c)$ then $q^2=-9\mod p$ {\cite[Chapter I.3, Equation (6)]{Cassels}}.
\end{proof}

\begin{rmk}
  The first few Markov triples are:
  \begin{gather*}(1,1,1),\ (1,1,2),\ (1,2,5),\ (1,5,13),\ (2,5,29),\\ (1,13,34),\ (5,13,194),\ (5,29,433),\ (2,29,169),\ldots\end{gather*}
  A number appearing in a Markov triple is called a {\em Markov number}; see \cite{OEIS} for more Markov numbers. Markov numbers share some arithmetic properties. For example, since $p_1^2+p_2^2=(3p_1p_2-p_3)p_3$, $p_3$ is a factor of a sum of squares; therefore all odd prime factors of a Markov number are congruent to $1$ modulo $4$. In particular, Theorem \ref{thm:markov} implies that $B_{3,q}$, $B_{7,q}$, $B_{11,q}\ldots$ never embed symplectically in $\cp{2}$.
\end{rmk}

\begin{rmk}
  For every Markov triple $(p_1,p_2,p_3)$ there exist three disjoint Lagrangian pinwheels $L_{p_i,q_i}\subset\cp{2}$, which are the vanishing cycles of the $\QQ$-Gorenstein degeneration to $\CC\PP(p_1^2,p_2^2,p_3^2)$ so the result is sharp. This gives many pairs of Lagrangian pinwheels which cannot be displaced from one another. Since these Lagrangians are not even immersed submanifolds it is not clear how to prove this nondisplaceability result using classical Floer theory.
\end{rmk}

\begin{rmk}
  The sign ambiguity in $q_i=\pm 3p_j/p_k\mod p_i$ is not really an ambiguity at all, as $B_{p,q}$ and $B_{p,p-q}$ are symplectomorphic; see Remark \ref{rmk:signamb}.
\end{rmk}

\begin{rmk}
  The equivalence of the statements about Lagrangian pinwheels and symplectic rational homology balls follows from a neighbourhood theorem due to Khodorovskiy, see Section \ref{sct:khodorovskiy} below.
\end{rmk}

\begin{rmk}
  The exotic monotone Lagrangian tori discovered by Vianna \cite{Vianna1,Vianna} are also in bijection with Markov triples; they are constructed by taking the barycentric torus in a toric weighted projective plane $\CC\PP(p_1^2,p_2^2,p_3^2)$ and transporting it to a nearby smooth fibre in the $\QQ$-Gorenstein degeneration (as such, they are disjoint from the pinwheel vanishing cycles). Indeed, the neck-stretching arguments used to constrain the superpotential in Vianna's paper \cite{Vianna} were important inspiration for the current paper.
\end{rmk}

\begin{rmk}
  It would be interesting to investigate this relationship further for Del Pezzo surfaces with $b_2>1$. On the one hand, Vianna's work \cite{ViannaDP} gives a plethora of almost toric structures corresponding to solutions of Diophantine equations, where there are visible embedded pinwheels; on the other hand, the literature on the related problem of classifying helices on del Pezzo surfaces suggests that $\cp{2}$ and its relation to Markov triples is a special case \cite{KarpovNogin}.
\end{rmk}

The idea of the proof is as follows. Given a rational ball $B_{p,q} \subset \cp{2}$, one can stretch the neck (in the sense of symplectic field theory) around $\partial B_{p,q}$ and study the limits of holomorphic lines in $\cp{2}$. Those limits compactify to give holomorphic curves in orbifold degenerations of $\cp{2}$, and our results are essentially applications of Weimin Chen's orbifold adjunction formula to these curves, in the spirit of W.~Chen's own work \cite{ChenOrbiAdj,ChenPseudo}, together with a number-theoretic result on Markov-type Diophantine equations due to Rosenberger \cite{Rosenberger79}. 

\subsection*{Outline}

In Section \ref{sct:pin}, we review some basic topological facts about Lagrangian pinwheels $L_{p,q}$ and about the symplectic rational homology balls $B_{p,q}$.

In Section \ref{sct:orbi}, we explain how one can replace a symplectic rational homology ball $B_{p,q}\subset X$ by an orbifold singularity. We also review W. Chen's theory of holomorphic curves in orbifolds, including the adjunction formula and the formula for the virtual dimension of the moduli space of orbifold curves. These are our main tools in what follows.

Section \ref{sct:orbicurve} is the bulk of the proof. In Section \ref{sct:lowdeg}, we prove that if a symplectic orbifold $\orb{X}$ arises from $X=\cp{2}$ by collapsing $N$ symplectically embedded rational homology balls $B_{p_i,q_i}$, $i=1,\ldots,N$, then there are strong constraints on the orbifold curves $\orb{C}$ of low degree in $\orb{X}$. In particular, if the degree is less than $\Delta:=\prod_{i=1}^Np_i$ then we show that if $Z\subset\orb{C}$ is the set of orbifold points then $1\leq |Z|\leq 2$. In Section \ref{sct:sftanalysis}, we show that (a) there are always curves with $|Z|=1$ and positive virtual dimension, and (b) if $\orb{X}$ has multiple singularities, then there are also curves with $|Z|=2$ which meet particular pairs of orbifold points.

The $|Z|=1$ curves allow us to prove Theorem \ref{thm:markov} A. We will show that the adjunction formula for such curves becomes the Markov equation, where the numbers $b$ and $c$ have the following geometric interpretation: in a local lift of the curve to $\CC^2$ in a neighbourhood of the orbifold point $\CC^2/\Gamma$, the link of the orbifold point of the curve is a $(b^2,c^2)$-torus knot.

The $|Z|=2$ curves allow us to prove Theorem \ref{thm:markov} B. We will show that these curves are suborbifolds, and again the adjunction formula becomes the Markov equation.

Theorem \ref{thm:markov} C and the fact that $N\leq 3$ (Corollary \ref{cor:fourball}) follow readily from A and B and elementary properties of Markov triples.

\section{Pinwheels and rational homology balls}\label{sct:pin}

\subsection{Lagrangian pinwheels and Khodorovskiy neighbourhoods}\label{sct:khodorovskiy}

\begin{dfn}[Pinwheel]
  Let $D$ denote the unit disc in $\CC$ and let $\sim_m$ denote the equivalence relation on $D$ which identifies $z$ and $z'$ if $z,z'\in\partial D$ and $(z/z')^m=1$. The quotient space $D/{\sim_m}$ is a CW-complex which we call the {\em pinwheel} $P_m$. The image of $\partial D$ in $P_m$ is called the {\em core circle}. See Figure \ref{fig:core} for an illustration of a neighbourhood of an arc in the core circle in $P_5$.
\end{dfn}
\begin{rmk}
  The pinwheel $P_m$ is a Moore space $M(\ZZ/(m),1)$, that is its reduced integral homology groups are
  \[\tilde{H}_k(P_m,\ZZ)=\begin{cases}
  \ZZ/(m)&\mbox{ if }k=1\\
  0&\mbox{ otherwise.}
  \end{cases}\]
\end{rmk}

\begin{dfn}[Lagrangian pinwheel; see {\cite[Definition 3.1]{KhodorovskiyQBU}}]
  Let $p$ be a positive integer and $0<q<p$ an integer coprime to $p$. A Lagrangian $(p,q)$-pinwheel in a symplectic manifold $(X,\omega)$ is a smooth Lagrangian immersion $f\colon D\looparrowright X$ such that:
  \begin{itemize}
  \item $f|_{D\setminus\partial D}$ is an embedding;
  \item $f(x)=f(y)$ if and only if $x,y\in\partial D$ with $x\sim_py$ (in other words, $f$ factors through a continuous embedding $P_p\to X$);
  \item if $f(x)=f(y)$ then $f_*(T_xD)\neq f_*(T_yD)$.
  \end{itemize}
  The number $q$ can be characterised as follows. Let $\theta$ be the angle coordinate on $D\setminus\{0\}$ and let $\Lambda\to D\setminus\{0\}$ be the $S^1$-bundle whose fibre over $x$ is the space of Lagrangian 2-planes in $T_{f(x)}X$ which contain $f_*(\partial_\theta)$. Let $S=f(\partial D)$ be the core of the pinwheel. Since $f(x)=f(y)$ if $x\sim_py$ for $x,y\in\partial D$, the bundle $\Lambda|_{\partial D}$ restricted to the boundary of the disc is the pullback of a bundle $\Lambda'\to S$ under the $p$-fold covering map $\partial D\to S$. Pick a trivialisation of $\Lambda'$ and extend it to a trivialisation of $\Lambda$. We have a section $\sigma\colon\partial D\to\Lambda$ defined by $\sigma(x)=f_*(T_xD)$; this section has a winding number $q$ relative to the chosen trivialisation. If we change the trivialisation of $\Lambda'$ and extend this to a new trivialisation of $\Lambda$ then we change $q$ by a multiple of $p$. Hence the residue modulo $p$ of the winding number is well-defined. See Lemma \ref{lma:firstchernclass} for a related characterisation of $q$.
\end{dfn}

\begin{figure}[htb]
  \begin{center}
    \includegraphics[width=200pt]{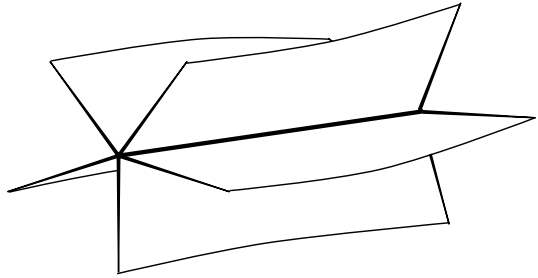}
  \end{center}
  \caption{A neighbourhood of an arc in the core circle in the pinwheel $P_5$.}
  \label{fig:core}
\end{figure}

\begin{dfn}
  Let $n$ be a positive integer and let $\rt_n=\{\zeta\in\CC\ :\ \zeta^n=1\}$ denote the group of complex $n$th roots of unity. Given integer weights $m_1,\ldots,m_k$, we define the quotient singularity
  \[\frac{1}{n}(m_1,\ldots,m_k)\]
  to be the singularity at $0$ of the quotient of $\CC^k$ by the action of $\rt_n$ with weights $m_1,\ldots,m_k$:
  \[(z_1,\ldots,z_k)\mapsto (\zeta^{m_1}z_1,\ldots,\zeta^{m_k}z_k).\]
\end{dfn}

\begin{exm}
  Let $p,q$ be coprime integers with $1\leq q<p$. Let $\Gamma_{p,q}$ be the action of $\rt_{p^2}$ with weights $(1,pq-1)$. The surface $\CC^2/\Gamma_{p,q}$ has a singularity of type $\tfrac{1}{p^2}(1,pq-1)$ at the origin. Consider the 3-fold $\CC^3/\rt_p$ of type $\frac{1}{p}(1,-1,q)$. We can embed the surface $\CC^2/\Gamma_{p,q}$ into this 3-fold as the subvariety $\{xy=z^p\}$. This allows us to find a $\QQ$-Gorenstein smoothing
  \[\mathcal{S}_{p,q}=\{xy=z^p+t\}\subset\CC^3/\rt_p\times\CC_t\]
  of $\CC^2/\Gamma_{p,q}$. Pick $t\neq 0$ and let $S_{p,q}$ be the fibre of this $\QQ$-Gorenstein smoothing over $t$. This is the Milnor fibre of $\CC^2/\Gamma_{p,q}$. Recall {\cite[Example 5.9.1]{Wahl}} that the singularity has vanishing Milnor number, so $S_{p,q}$ is a rational homology ball.

The variety $S_{p,q}$ contains a Lagrangian $(p,q)$-pinwheel $L_{p,q}$. To see this, we use a particular presentation of $S_{p,q}$ as a quotient of a Lefschetz fibration, described in \cite{LM}. Consider the $A_{p-1}$-Milnor fibre
  \[A_{p-1}=\{(x,y,z)\ :\ z^p+2xy=1\}\]
  and take its quotient by the (free) $\rt_p$-action where $\zeta\in\rt_p$ acts by
  \[\zeta\cdot(x,y,z)=(\zeta x,\zeta^{-1}y,\zeta^qz).\]
  The Lefschetz fibration $\pi\colon A_{p-1}\to\CC$, $\pi(x,y,z)=z$, is a conic fibration with $p$ nodal fibres. Let $\gamma_1,\ldots,\gamma_p$ be straight line segments connecting the critical values to the origin. The union of the vanishing thimbles over these paths is the universal cover of $P_p$ and descends to give a Lagrangian $(p,q)$-pinwheel $L_{p,q}$ in the quotient.

  The variety $S_{p,q}$ admits a Liouville form for which it is the symplectic completion of a compact Stein domain $B_{p,q}$ and for which $L_{p,q}$ is the Lagrangian skeleton in the sense of \cite{Biran}. We will not distinguish between different possible choices of subdomain $B_{p,q}$ since any two share a common Liouville retract and we are most interested in the pinwheel itself.
\end{exm}

\begin{exm}
  By a local-to-global theorem of Hacking and Prokhorov {\cite[Proposition 3.1]{HackingProkhorov}}, there is a $\QQ$-Gorenstein smoothing of any Fano surface with log canonical singularities, provided the singularities can locally be smoothed. They show that if $(p_1,p_2,p_3)$ is a Markov triple then the smoothing of $\mathbf{CP}(p_1^2,p_2^2,p_3^2)$ is a Fano surface with $K^2=9$ and hence biholomorphic to $\cp{2}$. Here is one way to see this, using the fact that $K^2$ is constant in $\QQ$-Gorenstein families.
  
  Suppose that $(p_1,p_2,p_3)$ is a Markov triple and let $p'_3=3p_1p_2-p_3$. The triple $(p_1,p_2,p'_3)$ is again a Markov triple and we call the transition $(p_1,p_2,p_3)\to(p_1,p_2,p'_3)$ a {\em mutation}. The $\QQ$-Gorenstein deformation
  \[\{z_0z_1-(1-t)z_2^{p'_3}-tz_3^{p_3}=0\}\subset\CC\PP(p_1^2,p_2^2,p_3,p'_3)\]
  connects $\CC\PP(p_1^2,p_2^2,p_3^2)$ at $t=0$ to $\CC\PP(p_1^2,p_2^2,(p'_3)^2)$ at $t=1$, so these both have the same value of $K^2$. Any Markov triple $(p_1,p_2,p_3)$ can be related to $(1,1,1)$ by a sequence of mutations, and $\mathbf{CP}(1,1,1)=\cp{2}$ has $K^2=9$. Therefore the $\QQ$-Gorenstein smoothing of $\mathbf{CP}(p_1^2,p_2^2,p_3^2)$ is $\cp{2}$ for any Markov triple $(p_1,p_2,p_3)$.

  The weighted projective space $\CC\PP(p_1^2,p_2^2,p_3^2)$ is a toric orbifold with (up to) three orbifold singularities
  \[\frac{1}{p_i^2}(1,p_iq_i-1),\qquad i=1,2,3.\]
  For each Markov number $p$, we therefore obtain a Lagrangian pinwheel $L_{p,q}\subset\cp{2}$ which is the vanishing cycle of the $\tfrac{1}{p^2}(1,pq-1)$ singularity.

  For the orbifold $\CC\PP(p_1^2,p_2^2,p_3^2)$, the numbers $q_1,q_2,q_3$ are determined by the equations
  \[q_i=\pm 3p_j/p_k\mod p_i\]
  where $i,j,k$ is a permutation of $1,2,3$. One could read this from the moment polytope for $\CC\PP(p_1^2,p_2^2,p_3^2)$, but it also follows by reducing (modulo $p_i$) the virtual dimension formula for the components of the toric (orbifold) divisor. In the literature on Markov numbers, the numbers $q_i$ are therefore related to the {\em characteristic numbers} $x_i$ of the Markov triple by $q_i=\pm 3x_i\mod p_i$ (see {\cite[II.3, Equation (5)]{Cassels}} or {\cite[p.28]{Aigner13}}). In particular, $q_i^2=-9\mod p_i$.
\end{exm}
\begin{rmk}
  The singularity $\CC^2/\Gamma_{p,q}$ is toric and its moment polytope is the cone in $\RR^2$ with edges $\RR_{\geq 0}(1,0)$ and $\RR_{\geq 0}(pq-1,p^2)$. This can be modified by a nodal trade at the singularity to give an almost toric fibration on the smoothing $B_{p,q}$: the polytope is the same but there is now a singular torus fibre at $(q,p)$ and a branch cut in the affine base connecting $(0,0)$ to $(q,p)$ (see Figure \ref{fig:almosttoric1}). The Lagrangian pinwheel $L_{p,q}$ is a ``visible surface'' in the sense of Symington {\cite[Section 7.1]{Symington}} with respect to this almost toric fibration: it is a vanishing thimble for the singular fibre, living over the vanishing path which is the branch cut. The core circle lives over the origin.
\end{rmk}

\begin{figure}[htb]
  \begin{center}
    \labellist
    \pinlabel $(q,p)$ at 120 80
    \pinlabel $(pq-1,p^2)$ at 90 250
    \endlabellist
    \includegraphics[width=100px]{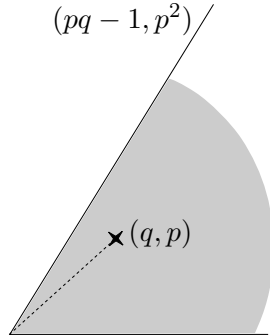}
  \end{center}
  \caption{An almost toric picture  of $B_{p,q}$ as a smoothing of a $\CC^2/\Gamma_{p,q}$. The star marks the singular fibre at $(q,p)$. The Lagrangian pinwheel $L_{p,q}$ lives over the (dashed) branch cut along $(q,p)$.}
  \label{fig:almosttoric1}
\end{figure}

\begin{rmk}\label{rmk:signamb}
  Note that $q$ is only determined up to sign modulo $p$ (which accounts for the $\pm$ signs everywhere): the rational homology balls $B_{p,q}$ and $B_{p,p-q}$ are symplectomorphic. To see this, note that the almost toric fibration for $B_{p,q}$ and for $B_{p,p-q}$ are related by an element of $GL(2,\ZZ)$ with negative determinant, which lifts to a symplectomorphism.
\end{rmk}
\begin{rmk}
  More globally, the moment polytope for the torus action on the weighted projective space $\CC\PP(p_1^2,p_2^2,p_3^2)$ is a triangle with vertices in $\ZZ^2$ whose sides have affine lengths $p_1^2,p_2^2,p_3^2$. This can be modified via nodal trades at each singularity to give an almost toric picture of $\cp{2}$; see {\cite[Corollary 2.5]{Vianna}}. The Lagrangian pinwheels $L_{p_i,q_i}$ are ``visible surfaces'' with respect to this almost toric fibration living over the branch cuts. If $(p_1,p_2,p_3)$ and $(p'_1,p_2,p_3)$ are related by a mutation then we can see both the pinwheels $L_{p_1,q_1}$ and $L_{p'_1,q'_1}$ simultaneously in the same almost toric fibration living over two halves of the same affine line (this is because mutation corresponds to the geometric operation of ``transferring the cut'' described in {\cite[Section 2]{Vianna}}). In particular, we see that $L_{p_1,q_1}$ and $L_{p'_1,q'_1}$ intersect precisely once transversely at an interior point of each pinwheel disc (see Figure \ref{fig:almosttoric2}). It follows from Theorem \ref{thm:markov} B that this intersection point cannot be removed by a Hamiltonian isotopy.
\end{rmk}

\begin{figure}[htb]
  \begin{center}
    \labellist
    \pinlabel ${L_{p_1,q_1}}$ at 44 18
    \pinlabel ${L_{p_2,q_2}}$ at 100 115
    \pinlabel ${L_{p'_1,q'_1}}$ at 80 179
    \pinlabel ${L_{p_3,q_3}}$ at 41 207
    \endlabellist
    \includegraphics{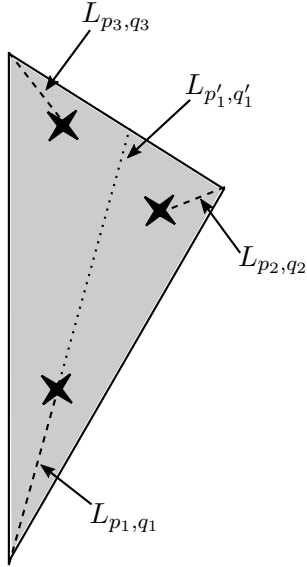}
  \end{center}
  \caption{An almost toric picture of $\cp{2}$ as a smoothing of a weighted projective space $\CC\PP(p_1^2,p_2^2,p_3^2)$; there are three singular fibres, marked by stars, with three (dashed) branch cuts. The Lagrangian pinwheels $L_{p_i,q_i}$ live over the branch cuts. The Lagrangian pinwheel $L_{p'_1,q'_1}$ is also depicted, living over the dotted line which is the continuation of one of the branch cuts. It intersects $L_{p_1,q_1}$ once transversely.}
  \label{fig:almosttoric2}
\end{figure}

\begin{dfn}
  Following Khodorovskiy \cite{KhodorovskiyQBU}, we say that a Lagrangian pinwheel $f\colon D\to X$ is {\em good} if it admits a neighbourhood $U$ such that $(U,f(D))$ is symplectomorphic to $(B_{p,q},L_{p,q})$. We call such a neighbourhood a {\em Khodorovskiy neighbourhood}. Khodorovskiy proves {\cite[Lemmas 3.3, 3.4]{KhodorovskiyQBU}} that in a neighbourhood of any Lagrangian $(p,q)$-pinwheel $L$ there is a good Lagrangian $(p,q)$-pinwheel $\mathcal{C}^0$-close to $L$ which agrees with $L$ away from a neighbourhood of its core circle.
\end{dfn}

We are interested in nondisplaceability questions about pinwheels, and start our argument by assuming we have a collection of pairwise disjoint Lagrangian pinwheels. If this is the case, then Khodorovskiy's result shows that we have a collection of pairwise disjoint good Lagrangian pinwheels and hence a collection of disjoint Khodorovskiy neighbourhoods. So, without loss of generality, we can assume that all our pinwheels are good.

\subsection{Reeb dynamics on the boundary}\label{sct:contgeom}

The boundary $\Sigma_{p,q}=\partial B_{p,q}$ is a hypersurface of contact-type diffeomorphic to the lens space $L(p^2,pq-1)$. By Gray's stability theorem, this contact structure is contactomorphic to the structure defined on $S^3/\Gamma_{p,q}\subset\CC^2/\Gamma_{p,q}$ by the contact form $\lambda=x_1dy_1+x_2dy_2$ (where $z_j=x_j+iy_j$).

The Reeb flow is given by
\[[z_1,z_2]\mapsto[e^{it}z_1,e^{it}z_2].\]
Let $g:=\gcd(p^2,pq-2)$ (where $g=p^2$ if $pq-2=0$). The Reeb orbits on $S^3$ are the fibres of the Hopf map $S^3\to S^2$. The Reeb orbits on $\Sigma_{p,q}$ give $\Sigma_{p,q}$ the structure of a Seifert-fibred space over an orbifold $S^2$.
\begin{lma}
  The generic point of the orbifold has stabiliser isomorphic to $\ZZ/(g)$.
\end{lma}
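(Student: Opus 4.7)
My plan is to lift the computation to the universal cover $S^3$. The Reeb flow on $\Sigma_{p,q}$ is the descent of the diagonal $S^1$-action $(z_1,z_2)\mapsto(e^{it}z_1,e^{it}z_2)$ on $S^3\subset\CC^2$, which commutes with the $\Gamma_{p,q}$-action $(z_1,z_2)\mapsto(\zeta z_1,\zeta^{pq-1}z_2)$ and therefore descends to an $S^1$-action on $\Sigma_{p,q}$ whose orbits are precisely the Reeb orbits. The stabiliser of a point of the base orbifold is then the stabiliser of the corresponding $S^1$-orbit on $\Sigma_{p,q}$.

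I would then compute this stabiliser at a generic point, namely one whose representative $(z_1,z_2)\in S^3$ has both coordinates nonzero. An element $e^{it}\in S^1$ fixes $[z_1,z_2]$ if and only if $(e^{it}z_1,e^{it}z_2)=(\zeta z_1,\zeta^{pq-1}z_2)$ for some $\zeta\in\rt_{p^2}$. Since $z_1,z_2\neq 0$, this forces $e^{it}=\zeta$ and $e^{it}=\zeta^{pq-1}$ simultaneously, and therefore $\zeta^{pq-2}=1$. The subgroup of $\zeta\in\rt_{p^2}$ satisfying $\zeta^{pq-2}=1$ is precisely $\rt_g$ with $g=\gcd(p^2,pq-2)$, and the correspondence $\zeta\mapsto e^{it}=\zeta$ identifies $\rt_g$ with the stabiliser in $S^1$, yielding the required isomorphism with $\ZZ/(g)$.

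The argument is almost formal once the diagonal lift is in place and there is no genuine difficulty; the only small caveat is the degenerate case $pq=2$ (which occurs only at $(p,q)=(2,1)$), where $\zeta^{pq-2}=1$ is satisfied automatically by every $\zeta\in\rt_{p^2}$ and the entire group stabilises, matching the stated convention $g=p^2$.
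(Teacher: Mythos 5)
Your proof is correct and follows essentially the same route as the paper: lift to $S^3$, impose the condition that the diagonal flow coincides with a $\Gamma_{p,q}$-element, and reduce to $\zeta^{pq-2}=1$ (the paper writes the same congruence as $k(pq-2)\equiv 0\pmod{p^2}$), giving $\rt_g\cong\ZZ/(g)$. The only cosmetic difference is that you express the stabiliser as the $S^1$-isotropy at a point of $\Sigma_{p,q}$ while the paper expresses it as the subgroup $\{\zeta^{p^2/g}\}\subset\Gamma_{p,q}$ preserving the Hopf circle; these are canonically identified since $\Gamma_{p,q}$ acts freely on $S^3$.
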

\begin{proof}
  Let $(x_0,y_0)$ be a point with $x_0\neq 0$ and $y_0\neq 0$. The Reeb orbit through this point is parametrised by $(e^{it}x_0,e^{it}y_0)$; suppose that this orbit passes through $(e^{2\pi ik/p^2}x_0,e^{2\pi ik(pq-1)/p^2}y_0)$. Then $k=k(pq-1)\mod p^2$, so $k(pq-2)=0\mod p^2$. This means that if $g=\gcd(p^2,pq-2)$ then $p^2/g$ divides $k$. The stabiliser is therefore the group of elements $\zeta^{p^2/g}$ where $\zeta\in\rt_{p^2}$. This group is isomorphic to $\rt_g$.
\end{proof}

If $p=2$, $q=1$ then $g=p^2$ and every point is generic; otherwise there are two orbifold points with stabiliser $\ZZ/(p^2)$. The singular points correspond to the Reeb orbits $z_1=0$ and $z_2=0$; we call these {\em exceptional Reeb orbits}. The generic orbit in $S^3$ is preserved by the subgroup
\[\ker(\ZZ/(p^2)\to\ZZ/(p^2/g)).\]

\begin{rmk}\label{rmk:g124}
Note that if $\gcd(p,q)=1$ then
\[g=\gcd(p^2,pq-2)=\begin{cases}
1&\mbox{ if }p=1,3\mod 4\\
2&\mbox{ if }p=0\mod 4\\
4&\mbox{ if }p=2\mod 4.
\end{cases}\]
To see this, observe that any prime divisor $\ell$ of $\gcd(p^2,pq-2)$ also divides $\gcd(p,pq-2)$, so $\ell$ divides $2$ and we see $g$ is a power of 2. If $p=2m$ then $\gcd(p^2,pq-2)=\gcd(4m^2,2(mq-1))=2\gcd(2m^2,mq-1)$. If $m$ is even then $mq-1$ is odd so $\gcd(2m^2,mq-1)=1$ (remember we are only interested in factors of 2). If $m$ is odd then $mq-1$ is even and $\gcd(2m^2,mq-1)=2$.
\end{rmk}

\subsection{Topology of rational homology balls}

The rational ball $B_{p,q}$ is homotopy equivalent to the pinwheel $L_{p,q}$ which is a Moore space $M(\ZZ/(p), 1)$. We have $H_1(\Sigma_{p,q};\ZZ)=\ZZ/(p^2)$ and $H_1(B_{p,q};\ZZ)\cong\ZZ/(p)$; the map $H_1(\Sigma_{p,q};\ZZ)\to H_1(B_{p,q};\ZZ)$ induced by inclusion is reduction modulo $p$. We may take the exceptional Reeb orbit $z_1=0$ as a generator $e$ for $H_1(\Sigma_{p,q};\ZZ)$. The generic orbit is homologous to $p^2e/g$.

Note that $H^2(B_{p,q};\ZZ)\cong\ZZ/(p)$.

\begin{lma}\label{lma:firstchernclass}
  The first Chern class $c_1(B_{p,q})\in H^2(B_{p,q};\ZZ)\cong\ZZ/(p)$ is primitive.
\end{lma}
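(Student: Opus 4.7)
The plan is to restrict $c_1(B_{p,q})$ to $\Sigma_{p,q}$ and identify it with the first Chern class of the contact distribution, which can be computed directly from the description of Section \ref{sct:contgeom}.

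The long exact sequence of the pair $(B_{p,q}, \Sigma_{p,q})$ in cohomology, combined with Lefschetz duality $H^k(B_{p,q},\Sigma_{p,q};\ZZ) \cong H_{4-k}(B_{p,q};\ZZ)$, yields a short exact sequence
\[0 \to H^2(B_{p,q};\ZZ) \to H^2(\Sigma_{p,q};\ZZ) \to H_1(B_{p,q};\ZZ) \to 0,\]
that is $0 \to \ZZ/p \to \ZZ/p^2 \to \ZZ/p \to 0$, with the right-hand map dual to the reduction-mod-$p$ map on $H_1$ recorded in the preceding paragraph. The restriction therefore identifies $H^2(B_{p,q};\ZZ)$ with the unique index-$p$ subgroup of $\ZZ/p^2$, and $c_1(B_{p,q})$ is primitive in $\ZZ/p$ iff its image in $\ZZ/p^2$ has order exactly $p$.

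On the boundary, the outward Liouville vector field together with the Reeb vector field span a trivial complex line subbundle of $TB_{p,q}|_{\Sigma_{p,q}}$, and the complementary complex line is the contact distribution $\xi$. Hence $c_1(B_{p,q})|_{\Sigma_{p,q}} = c_1(\xi)$, and it suffices to show that $c_1(\xi)$ has order exactly $p$ in $\ZZ/p^2 \cong H^2(\Sigma_{p,q};\ZZ)$.

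Using the presentation $\Sigma_{p,q} = S^3/\rt_{p^2}$ with weights $(1, pq-1)$, the standard contact distribution $\xi_0 \subset TS^3 \subset \CC^2$ is globally trivialised by the complex section $s(z_1, z_2) = (-\bar z_2, \bar z_1)$, which is Hermitian-orthogonal to the position vector. A brief calculation of the linearised action shows that $d\zeta \cdot s = \zeta^{pq}\, s$ for each $\zeta \in \rt_{p^2}$, so $\xi$ is the line bundle on $\Sigma_{p,q}$ associated to the character $\zeta \mapsto \zeta^{pq}$, giving $c_1(\xi) = pq \in \ZZ/p^2$. Since $\gcd(p,q)=1$, the element $pq$ has order exactly $p$, proving primitivity. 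The only step requiring care is the character computation: with consistent sign conventions it determines $c_1(\xi)$ exactly rather than only up to units, which is precisely what the primitivity question demands.
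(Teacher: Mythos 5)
Your proof is correct and takes a genuinely different route from the paper's. The paper retracts $B_{p,q}$ onto the pinwheel $L_{p,q}$, covers $L_{p,q}$ by a neighbourhood of the core circle and the open $2$-cell, and computes the clutching function of $TB_{p,q}$ explicitly from Khodorovskiy's local Lagrangian immersion model; the determinant of that clutching matrix winds $q$ times, giving $c_1(B_{p,q})=q\in\ZZ/(p)$. You instead restrict $c_1$ to the boundary lens space, use Lefschetz duality to identify $H^2(B_{p,q};\ZZ)\to H^2(\Sigma_{p,q};\ZZ)$ with the inclusion of the unique index-$p$ subgroup of $\ZZ/(p^2)$, note $c_1(B_{p,q})|_{\Sigma_{p,q}}=c_1(\xi)$ because the Liouville and Reeb directions span a trivial complex line, and compute $c_1(\xi)$ by descending the trivialising section $s(z_1,z_2)=(-\bar z_2,\bar z_1)$ of $\xi_0\subset TS^3$ through the free $\rt_{p^2}$-action of weight $(1,pq-1)$. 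The equivariance $d\zeta\cdot s=\zeta^{pq}\,(s\circ\zeta)$ checks out, so $\xi$ is the flat line bundle of the character $\zeta\mapsto\zeta^{pq}$, with $c_1(\xi)=pq\in\ZZ/(p^2)$ of order exactly $p$, which is what primitivity requires. Both arguments are sound: the paper's is more hands-on and additionally recovers the value $q$ itself (tying in with the winding-number characterisation of $q$ in the pinwheel definition), at the cost of an explicit matrix computation in the local model; yours is cleaner and more functorial, and connects naturally with the Reeb-dynamics description of Section~\ref{sct:contgeom}. One small point: your closing remark about needing the character computation ``exactly rather than only up to units'' is overcautious---primitivity depends only on the order of $c_1(\xi)$ in $\ZZ/(p^2)$, and that order is insensitive to multiplication by a unit or to the overall sign convention.
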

\begin{proof}
  Since $B_{p,q}$ retracts onto $L_{p,q}$, we may restrict $TB_{p,q}$ to $L_{p,q}$. We can trivialise $TB_{p,q}$ over a neighbourhood $U$ of the core circle and over the interior $V$ of the 2-cell; the clutching function for the bundle is then given by a map $S^1\to \mathrm{Sp}(4,\RR)$, where $S^1$ is a deformation retract of $U\cap V$. The first Chern class is simply the winding number of this clutching function. This is only defined modulo $p$: we can change the trivialisation over $U$ by a map from the core circle to $\mathrm{Sp}(4,\RR)$. This changes the winding number by a multiple of $p$.

  We can compute the winding number explicitly via Khodorovskiy's local model {\cite[Theorem 3.2]{KhodorovskiyQBU}}. Let $(x_1,x_2,x_3,x_4)$ be local coordinates on a neighbourhood $S^1\times\RR^3$ of the core circle in $B_{p,q}$, with symplectic form $dx_1\wedge dx_2+dx_3\wedge dx_4$. Let $(r,\theta)$ be polar coordinates on the 2-cell in the pinwheel. The standard model for the Lagrangian immersion of the punctured 2-cell into $S^1\times\RR^3$ is
  \[f(r,\theta)=(p\theta,q(1-r)^2/2p,(1-r)\cos(q\theta),-(1-r)\sin(q\theta)).\]
  Then we have
  \begin{align*}
    f_*\partial_r&=(0,\frac{q(r-1)}{p},-\cos(q\theta),\sin(q\theta))\\
    f_*\partial_\theta&=(p,0,-q(1-r)\sin(q\theta),-q(1-r)\cos(q\theta)).
  \end{align*}
  Let $m=|f_*\partial_\theta|=p|f_*\partial_r|=\sqrt{p^2+q^2(1-r)^2}$, let $J$ be the standard complex structure $J\partial_{x_1}=\partial_{x_2}$, $J\partial_{x_3}=\partial_{x_4}$, and let $M_{r,\theta}$ be the matrix whose columns are
  \[\frac{p}{m}f_*\partial_r,\ \frac{p}{m}Jf_*\partial_r,\ \frac{1}{m}f_*\partial_\theta,\ \frac{1}{m}Jf_*\partial_\theta,\]
  that is
  \[M_{r,\theta}=\left(\begin{array}{cc}
                         \frac{iq(1-r)}{m}& \frac{p}{m}\\
                         -\frac{p}{m}e^{-iq\theta}& -\frac{q(1-r)}{m}ie^{-iq\theta}
                       \end{array}\right).\]
  This is a unitary matrix whose inverse sends $(1,0,0,0)$ to the normalised vector in the $f_*\partial_r$-direction and $(0,0,1,0)$ to the normalised vector in the $f_*\partial_\theta$-direction. Therefore $M^{-1}_{1,\theta}$ is the clutching function for the bundle $TB_{p,q}$ described in the first paragraph. This has determinant $e^{iq\theta}$, which has winding number $q$ as $\theta$ runs around the circle. Since $\mathrm{Sp}(4,\RR)\simeq U(2)$ and $\det_*\colon\pi_1(U(2))\to \pi_1(U(1))$ is an isomorphism, this tells us that the first Chern class of $TB_{p,q}$ is $q\in\ZZ/(p)$, which is primitive modulo $p$.     
\end{proof}

\subsection{Extrinsic topology of Lagrangian pinwheels}

Suppose that $(X,\omega)$ is a symplectic 4-manifold with $H_1(X;\ZZ)=0$, $H_2(X;\ZZ)=\ZZ$, and $B_i\subset X$, $i=1,\ldots,N$, is a collection of pairwise disjointly embedded symplectic rational homology balls $B_i\cong B_{p_i,q_i}$. Let $\Sigma_i$ denote $\partial B_i$, let $B=\coprod_{i=1}^NB_i$, $\Sigma=\coprod_{i=1}^N\Sigma_i$ and $V=X\setminus B$. Write $\jmath\colon V\to X$ for the inclusion map. Let $\Delta=\prod_{i=1}^Np_i$ and let $L_i$ denote the Lagrangian pinwheel in $B_i$.

\begin{lma}
The numbers $p_i$ are pairwise coprime.
\end{lma}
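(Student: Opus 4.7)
The plan is to use the Mayer--Vietoris sequence for the decomposition $X = V \cup \nu(B)$ (with $V \cap \nu(B) \simeq \Sigma$), run in both integer and $\ZZ/\ell$ coefficients, to first show $H_1(V;\ZZ) = 0$ and then read off pairwise coprimality.

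First I extract from the integer Mayer--Vietoris sequence
\[
0 \to H_2(V) \to H_2(X) \xrightarrow{\delta} H_1(\Sigma) \to H_1(V) \oplus H_1(B) \to H_1(X) = 0
\]
that $\delta(\xi)$ lies in $\bigoplus_i p_i\ZZ/p_i^2$, since the composition with the reduction map $H_1(\Sigma_i) \to H_1(B_i)$ vanishes. Writing $\delta(\xi) = (p_1\beta_1,\dots,p_N\beta_N)$, the order $m$ of $\delta(\xi)$ divides $\operatorname{lcm}(p_1,\dots,p_N)$, and counting cardinalities in the exact sequence gives $|H_1(V;\ZZ)| = (\prod_i p_i)/m$. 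Thus it suffices to show $H_1(V;\ZZ) = 0$: this forces $m = \prod p_i$, and combining with $m \mid \operatorname{lcm}(p_i) \mid \prod p_i$ yields $\operatorname{lcm}(p_i) = \prod p_i$, i.e.\ pairwise coprimality.

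To kill $H_1(V;\ZZ)$, I fix a prime $\ell$ and pass to $\ZZ/\ell$ coefficients. By naturality of the connecting homomorphism, $\delta_{\ZZ/\ell}(\bar\xi)$ is the mod-$\ell$ reduction of $\delta_\ZZ(\xi)$; this vanishes factor-by-factor, since if $\ell\mid p_i$ then $p_i\beta_i \equiv 0 \pmod\ell$, while if $\ell\nmid p_i$ then $H_1(\Sigma_i;\ZZ/\ell) = 0$. A parallel check shows that the restriction $H_1(\Sigma_i;\ZZ/\ell) \to H_1(B_i;\ZZ/\ell)$ is an isomorphism in either case (identity $\ZZ/\ell \to \ZZ/\ell$ when $\ell\mid p_i$, $0 \to 0$ otherwise), so $H_1(\Sigma;\ZZ/\ell) \to H_1(B;\ZZ/\ell)$ is a global isomorphism.

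Combining these with $H_1(X;\ZZ/\ell) = 0$, Mayer--Vietoris shows that $H_1(\Sigma;\ZZ/\ell) \to H_1(V;\ZZ/\ell) \oplus H_1(B;\ZZ/\ell)$ is both injective (because $\delta_{\ZZ/\ell} = 0$) and surjective, hence bijective; equating cardinalities with the isomorphism onto the second summand forces $H_1(V;\ZZ/\ell) = 0$. Since rational Mayer--Vietoris gives $b_1(V) = 0$, $H_1(V;\ZZ)$ is a finite torsion group, and vanishing of its $\ell$-torsion at every prime yields $H_1(V;\ZZ) = 0$. The only delicate step I expect is the naturality of the Mayer--Vietoris connecting map under coefficient reduction, which is standard; everything else is elementary exact-sequence bookkeeping.
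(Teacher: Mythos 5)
Your proof is correct, and it takes a genuinely different route from the paper's. The paper's argument is intersection-theoretic and uses the pinwheels directly: the class $[L_i]\in H_2(B_i;\ZZ/(d))$ has $[L_i]^2\neq 0 \bmod d$, so if $d$ divided both $p_i$ and $p_j$ then $[L_i]$ and $[L_j]$ would be nonzero in the one-dimensional group $H_2(X;\ZZ/(d))$, forcing $[L_i]\cdot[L_j]\neq 0$ and contradicting disjointness. Your argument instead never touches the pinwheels; it is a pure Mayer--Vietoris bookkeeping exercise with $\ZZ$, $\QQ$ and $\ZZ/\ell$ coefficients, proving $H_1(V;\ZZ)=0$ unconditionally and then extracting coprimality from the cardinality count $|H_1(V;\ZZ)|=\bigl(\prod_i p_i\bigr)/m$ together with $m\mid\operatorname{lcm}(p_i)$.

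What each buys: the paper's proof is shorter and more geometric, but it reads off the self-intersection of the pinwheel as an input. Your proof is longer and purely algebro-topological, requiring only the homology of lens spaces and rational homology balls, and it has the side effect of establishing $H_1(V;\ZZ)=0$ directly. That is interesting because the paper proves that statement \emph{after} this lemma (as part of Lemma 2.13), using the coprimality to identify $H_1(\Sigma;\ZZ)\cong\ZZ/(\Delta^2)$, whereas your proof reverses the logical order. Two small points you leave implicit but which are easy to supply: you use $H_1(V;\ZZ/\ell)\cong H_1(V;\ZZ)\otimes\ZZ/\ell$, which requires $V$ to be connected (this follows from the low-degree part of the same Mayer--Vietoris sequence, since $\widetilde H_0(X)=H_1(X)=0$), and you use finiteness of $H_1(V;\ZZ)$, which follows from $b_1(V)=0$ together with finite generation (as $V$ is a compact manifold with boundary). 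The naturality of the Mayer--Vietoris connecting map under $\ZZ\to\ZZ/\ell$ is, as you say, standard: the coefficient reduction gives a map of the short exact sequences of chain complexes underlying Mayer--Vietoris, and the snake lemma is functorial.
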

\begin{proof}
  Suppose that $d$ is a prime divisor of $p_i$. The pinwheel $L_i$ defines a nontrivial class in $H_2(B_i;\ZZ/(d))$ which satisfies $[L_i]^2\neq 0\mod d$. If $d\neq 1$ were a common prime divisor of $p_i$ and $p_j$ then $L_i$ and $L_j$ would define nontrivial classes in $H_2(X;\ZZ/(d))$. Since $H_2(X;\ZZ/(d))=\ZZ/(d)$ and the intersection pairing is nontrivial, this implies that $[L_i]\cdot [L_j]\neq 0$, contradicting the fact that $L_i$ and $L_j$ are disjoint.
\end{proof}

\begin{lma}
  If $c_1(X)$ is divisible by $d$ in $H^2(X;\ZZ)$ and $B_{p,q}$ is symplectically embedded in $X$ then $\gcd(p,d)=1$.
\end{lma}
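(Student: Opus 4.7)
The plan is to invoke naturality of the first Chern class together with Lemma \ref{lma:firstchernclass}. The inclusion $\iota\colon B_{p,q}\hookrightarrow X$ gives a pullback map $\iota^*\colon H^2(X;\ZZ)\to H^2(B_{p,q};\ZZ)\cong \ZZ/(p)$, and since $TX|_{B_{p,q}}=TB_{p,q}$ as symplectic vector bundles, one has $\iota^*c_1(X)=c_1(B_{p,q})$.

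If I write $c_1(X)=d\cdot\alpha$ for some $\alpha\in H^2(X;\ZZ)$, then applying $\iota^*$ gives $c_1(B_{p,q})=d\cdot\iota^*\alpha$ in $\ZZ/(p)$. By Lemma \ref{lma:firstchernclass}, the class $c_1(B_{p,q})$ is a generator of $\ZZ/(p)$. But any element of the form $d\cdot\beta\in\ZZ/(p)$ lies in the subgroup generated by $d\bmod p$, which has order $p/\gcd(p,d)$. So for $c_1(B_{p,q})$ to generate all of $\ZZ/(p)$, we must have $\gcd(p,d)=1$.

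There is no real obstacle here; the only thing to check carefully is that $c_1$ is natural under the embedding, which holds because a symplectic embedding identifies $TX|_{B_{p,q}}$ with $TB_{p,q}$ as complex (or symplectic) vector bundles up to homotopy, and $c_1$ depends only on the homotopy class of such a structure. The heavy lifting has already been done in Lemma \ref{lma:firstchernclass} by the explicit computation showing that $c_1(B_{p,q})$ is represented by $q$ modulo $p$, which is a unit since $\gcd(p,q)=1$.
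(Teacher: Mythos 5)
Your proof is correct and follows essentially the same route as the paper: pull back $c_1(X)$ along the inclusion, note that divisibility by $d$ in $H^2(X;\ZZ)$ forces $c_1(B_{p,q})$ to lie in the subgroup of $\ZZ/(p)$ of order $p/\gcd(p,d)$, and conclude from the primitivity established in Lemma \ref{lma:firstchernclass}. No issues.
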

\begin{proof}
  Let $\iota\colon B_{p,q}\to X$ be the inclusion map. Since this is a symplectic map, $\iota^*c_1(X)=c_1(B_{p,q})\in H^2(B_{p,q};\ZZ)\cong\ZZ/(p)$. This implies $c_1(B_{p,q})$ is a multiple of $d$ in $\ZZ/(p)$, so has order at most $p/\gcd(p,d)$. Because $c_1(B_{p,q})$ is primitive, we deduce that $\gcd(p,d)=1$.
\end{proof}

\begin{lma}\label{lma:homology}
  We have:
  \begin{enumerate}
  \item[(a)] $H_1(V;\ZZ)=0$,
  \item[(b)] $\jmath_*\colon H_2(V;\ZZ)\to H_2(X;\ZZ)=\ZZ$ is the inclusion of the ideal $(\Delta)$.
  \item[(c)] $H^2(V;\ZZ)\cong H_2(V,\Sigma;\ZZ)\cong\ZZ\oplus T$ where
    \[T=\ZZ/(\Delta/m)\]
    for some divisor $m$ of $\Delta$.
  \item[(d)] We define $\eE\in H_2(V,\Sigma;\QQ)$ to be the unique element such that $m\eE$ is a generator for the lattice $H_2(V,\Sigma;\ZZ)/T\subset H_2(V,\Sigma;\QQ)$. The map
    \[\jmath^*\colon H^2(X;\QQ)\to H^2(V;\QQ),\]
    (or, Poincar\'e-dually, $\jmath_!\colon H_2(X;\QQ)\to H_2(V,\Sigma;\QQ)$) sends a generator $H\in H^2(X;\ZZ)$ to $\Delta\eE\in H_2(V,\Sigma;\QQ)$.
  \end{enumerate}
\end{lma}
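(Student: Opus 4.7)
My plan is to derive parts (a)--(d) by chasing the long exact sequences of the pairs $(X,V)$ and $(X,B)$, combined with excision and Lefschetz duality. The first step is to record that, by Lefschetz duality on each rational homology ball $B_i$ (with $H_1(B_i;\ZZ)\cong\ZZ/p_i$ and $H_2(B_i;\ZZ)=0$), the only nonzero low-degree relative groups are $H_2(B_i,\Sigma_i;\ZZ)\cong\ZZ/p_i$ and $H_4(B_i,\Sigma_i;\ZZ)\cong\ZZ$.

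For (a) and (b), the long exact sequence of $(X,V)$ with excision $H_\ast(X,V)\cong\bigoplus_iH_\ast(B_i,\Sigma_i)$ gives
\[
0\to H_2(V)\to H_2(X)=\ZZ\xrightarrow{r}\bigoplus_i\ZZ/p_i\to H_1(V)\to 0.
\]
The main point is surjectivity of $r$. Each coordinate $r_i$ is Poincar\'e-dual to the restriction $\iota_i^\ast\colon H^2(X)\to H^2(B_i)\cong\ZZ/p_i$. By Lemma~\ref{lma:firstchernclass} the class $c_1(B_i)$ is primitive, and by the preceding lemma the divisibility $\lambda$ of $c_1(X)=\lambda H$ satisfies $\gcd(\lambda,p_i)=1$; together these force $\iota_i^\ast H$ to be primitive, so each $r_i$ is surjective. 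Pairwise coprimality of the $p_i$ combined with the Chinese Remainder Theorem upgrades this to surjectivity of $r$, yielding $H_1(V)=0$ and $H_2(V)=\ker r=\Delta\ZZ$.

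For (c), Lefschetz duality identifies $H^2(V;\ZZ)\cong H_2(V,\Sigma;\ZZ)$, and the long exact sequence of $(X,B)$ (after excision $H_\ast(X,B)\cong H_\ast(V,\Sigma)$) yields the short exact sequence
\[
0\to\ZZ\xrightarrow{\jmath_!}H_2(V,\Sigma)\to\bigoplus_i\ZZ/p_i\cong\ZZ/\Delta\to 0.
\]
Extensions of $\ZZ/\Delta$ by $\ZZ$ are classified by $\OP{Ext}^1(\ZZ/\Delta,\ZZ)\cong\ZZ/\Delta$; a Smith normal form computation shows such an extension with class $k$ is isomorphic to $\ZZ\oplus\ZZ/\gcd(k,\Delta)$, which proves (c) with $T=\ZZ/(\Delta/m)$ for $m=\Delta/\gcd(k,\Delta)$. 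For (d), the same Smith-form analysis pins down the image $\jmath_!(H)$ as $(m,t)\in\ZZ\oplus T$ (since the cokernel has order $m\cdot|T|=\Delta$); rationally the torsion vanishes, leaving a rational multiple of the lattice generator $u=m\eE$, and the formula $\jmath^\ast H=\Delta\eE$ follows after unwinding the normalisation of $\eE$ and the relation $m\cdot|T|=\Delta$.

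The main obstacle is the bookkeeping in (d): one must carefully track the Poincar\'e duality identifications (with their signs), pass correctly from the integral extension $\ZZ\oplus T$ to its rationalisation, and apply the conventions for $\eE$ so as not to pick up a spurious factor of $m$ or $|T|$. The extension-theoretic computations underlying (c) and (d) are elementary but easy to miscount, so the delicate part is verifying that the image of $H$ has the stated free part $m$ and not some other divisor of $\Delta$.
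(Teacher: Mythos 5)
Your route through (a) and (b) is different from the paper's but valid. The paper applies Mayer--Vietoris to $X=B\cup_\Sigma V$, using that $H_1(\Sigma)\to H_1(B)$ is the reduction $\ZZ/\Delta^2\to\ZZ/\Delta$, followed by a purely group-theoretic argument: a surjection from a cyclic group onto $\ZZ/\Delta\oplus H_1(V)$ forces $H_1(V)=0$. You instead use the long exact sequence of $(X,V)$ and prove that $H_2(X)\to\bigoplus_iH_2(B_i,\Sigma_i)\cong\bigoplus_i\ZZ/p_i$ is onto by invoking primitivity of $c_1(B_i)$ (Lemma~\ref{lma:firstchernclass}) and $\gcd(\lambda,p_i)=1$ where $c_1(X)=\lambda H$. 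Both arguments work; the paper's is purely topological, while yours uses symplectic input that is in any case already established. For (c), your homology exact sequence of $(X,B)$ and the paper's cohomology exact sequence of the cofibre $X/V$ give the same short exact sequence under Lefschetz duality, and the Smith-normal-form analysis of the extension is the same.

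Part (d) has a genuine gap, and it is exactly the normalisation you flagged as delicate. With your convention $T\cong\ZZ/\gcd(k,\Delta)$ and $m:=\Delta/\gcd(k,\Delta)$, you correctly find that the free part of $\jmath_!(H)$ is $m$, and you take $u:=m\eE$ to be the lattice generator. But then rationally $\jmath_!(H)=mu=m^2\eE$, which equals $\Delta\eE$ only when $m^2=\Delta$, i.e.\ $|T|^2=\Delta$. That is false in general: for $N=1$, $p_1=2$ and the standard $\rp{2}\subset\cp{2}$, the complement $V$ retracts onto a conic, so $T=0$ while $\Delta=2\neq 1=|T|^2$. The root cause is a mismatch between the lemma's statement and the paper's own proof: the proof sets $T\cong\ZZ/(m)$, i.e.\ $m=|T|$, in which case the free part of $\jmath_!(H)$ is $\Delta/m$ and $\jmath_!(H)=(\Delta/m)\cdot(m\eE)=\Delta\eE$ as desired; the statement's ``$T=\ZZ/(\Delta/m)$'' inverts this convention. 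If you insist on $m=\Delta/|T|$ as you did, the lattice generator must be taken to be $(\Delta/m)\eE=|T|\eE$, not $m\eE$. As written, your concluding claim that ``$\jmath^*H=\Delta\eE$ follows after unwinding'' is not supported by the computations that precede it.
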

\begin{proof}
The Mayer-Vietoris sequence gives
\begin{gather*}0\to H_2(B;\ZZ)\oplus H_2(V;\ZZ)\to H_2(X;\ZZ)\to\\
  \qquad \to H_1(\Sigma;\ZZ)\to H_1(B;\ZZ)\oplus H_1(V;\ZZ)\to 0\end{gather*}
We have $H_2(B;\ZZ)=0$ and $H_1(\Sigma;\ZZ)=\TO{\ZZ/(\Delta^2)}{\mod\Delta}{\ZZ/(\Delta)}=H_1(B;\ZZ)$.

(a) Suppose that $H_1(V;\ZZ)\neq 0$. The existence of a surjective map of abelian groups $\ZZ/(\Delta^2)\to \ZZ/(\Delta)\oplus H_1(V;\ZZ)$ tells us that $\ZZ/(\Delta)\oplus H_1(V;\ZZ)$ is cyclic. Therefore, by the classification of abelian groups, $H_1(V;\ZZ)$ is cyclic of order coprime to $\Delta$. But there is no surjective map $\ZZ/(\Delta^2)\to\ZZ/(m)$ unless $m$ divides $\Delta^2$.

(b) Part (a) implies that the Mayer-Vietoris sequence breaks up as
\[0\to H_2(V;\ZZ)\to H_2(X;\ZZ)=\ZZ\to\ZZ/(\Delta)\to 0,\]
which implies (b).

(c) Poincar\'e-Lefschetz duality tells us that $H^2(V;\ZZ)\cong H_2(V,\Sigma;\ZZ)$. The space $X/V$ is the wedge $\bigvee_{i=1}^N(B_i/\Sigma_i)$, and Lefschetz duality further implies that $H^i(B_i / \Sigma_i) \cong H_{4-i}(B_i)$ for $i>0$, so
\[H^2(B_i/\Sigma_i;\ZZ)=0,\quad H^3(B_i/\Sigma_i;\ZZ)=\ZZ/(p_i).\]
The long exact sequence in relative cohomology:
\[H^2(X/V;\ZZ)\to H^2(X;\ZZ)\to H^2(V;\ZZ)\to H^3(X/V;\ZZ)\to H^3(X;\ZZ)\]
becomes
\[0\to \ZZ\stackrel{\jmath^*}{\to} H^2(V;\ZZ)\to \ZZ/(\Delta)\to 0.\]
Therefore, $H_2(V,\Sigma;\ZZ)\cong\ZZ\oplus T$ where $T$ is torsion, and $T$ embeds as a subgroup of $\ZZ/(\Delta)$, so $T\cong \ZZ/(m)$ for some $m$ dividing $\Delta$ and $\jmath^*H=(\Delta/m,\ell)$ for some $\ell\in T$.

(d) From part (c), over $\QQ$, we have $\jmath^*H=(\Delta/m)m\eE=\Delta\eE$.
\end{proof}

\section{Orbifolds}\label{sct:orbi}

\subsection{Almost complex structures}

  Let $(X,\omega)$ be a symplectic manifold and $B_i\subset X$, $i=1,\ldots,N$, be a collection of pairwise disjoint symplectic embeddings of rational homology balls $B_i\cong B_{p_i,q_i}$. Let $\Sigma_i$ denote the boundary $\partial B_i$ and let $\Sigma=\bigcup_{i=1}^N\Sigma_i$. Let $V=X\setminus \coprod_{i=1}^NB_i$.

A neighbourhood $U_i$ of $\Sigma_i\subset X$ is symplectomorphic to a neighbourhood of $\orb{\Sigma}_i=S^3/\Gamma_{p_i,q_i}\subset\CC^2/\Gamma_{p_i,q_i}$. For each $i$, we fix a symplectic embedding $\phi_i\colon U_i\to \CC^2/\Gamma_{p_i,q_i}$ sending $\Sigma_i$ to $\orb{\Sigma}_i$. Let $\orb{B}_i$ denote the compact component of $\left(\CC^2/\Gamma_{p_i,q_i}\right)\setminus\orb{\Sigma}_i$. Define the symplectic orbifold $(\orb{X},\orb{\omega})$ by
\[\orb{X}=V\cup_{\phi_i(\Sigma_i)\cong\orb{\Sigma}_i}\coprod_{i=1}^N\orb{B}_i.\]
This orbifold has a singularity $\orb{x}_i\in\orb{B}_i$ of type $\frac{1}{p_i^2}(1,p_iq_i-1)$ for each $i=1,\ldots,N$.

In what follows, let $(\omega_i,I_i)$ denote the standard K\"ahler structure on $\CC^2/\Gamma_{p_i,q_i}$. Let $\mathcal{J}_{\orb{X}}$ denote the space of compatible almost complex structures on $\orb{X}$ which agree with $I_i$ on $\orb{B}_i$ and let $\mathcal{J}_\Sigma$ denote the space of compatible almost complex structures on $X$ which agree with $\phi_i^*I_i$ on $U_i$.

The following lemmas are proved by Vianna:

\begin{lma}[{\cite[Example 3.3]{Vianna}}]\label{lma:adjusted}
  Any $J\in\mathcal{J}_\Sigma$ is an {\em adjusted} almost complex structure in the sense of {\cite[Section 2.2]{BEHWZ}}. In particular, the almost complex structure $J|_V$ extends in a canonical way to an almost complex structure $\overline{J|_V}$ on the symplectic completion $(\bar{V},\overline{\omega|_V})$.
\end{lma}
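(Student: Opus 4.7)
The content of the lemma is two-fold: that any $J\in\mathcal{J}_\Sigma$ satisfies the local ``adjusted'' condition of Bourgeois--Eliashberg--Hofer--Wysocki--Zehnder near $\Sigma$, and that such a $J$ then extends canonically to the symplectic completion $(\bar V,\overline{\omega|_V})$. I would prove both points by reducing to a direct check in the local model $\CC^2/\Gamma_{p_i,q_i}$.

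For the adjustedness, I would first recall from \cite{BEHWZ} what is required: in a tubular neighbourhood $\Sigma_i\times(-\epsilon,\epsilon)_t$ with $\omega = d(e^t\lambda_i)$, an adjusted $J$ must preserve the contact distribution $\xi_i=\ker\lambda_i$, send $J\xi_i=\xi_i$ with $\omega|_{\xi_i}(\cdot,J\cdot)$ positive, and map the outward Liouville direction to the Reeb vector field $R_i$. By assumption $J|_{U_i}=\phi_i^*I_i$, so it suffices to verify the conditions for the standard K\"ahler structure $(\omega_i,I_i)$ on a neighbourhood of $\orb{\Sigma}_i\subset\CC^2/\Gamma_{p_i,q_i}$. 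Upstairs in $\CC^2$, the Liouville vector field for $\lambda=x_1dy_1+x_2dy_2$ is the radial field $\tfrac12\sum(x_j\partial_{x_j}+y_j\partial_{y_j})$, and applying $I_{\mathrm{std}}$ produces exactly $\tfrac12\sum(x_j\partial_{y_j}-y_j\partial_{x_j})$, which is (up to the positive rescaling $e^t$) the Reeb field of $\lambda$ on the unit sphere. Compatibility with the contact distribution is the standard fact that $I_{\mathrm{std}}$ restricts to a complex structure on $\xi_{\mathrm{std}}$ taming the symplectic form there. Since $\lambda$, $\omega_i$, $I_i$ and the radial vector field are all $\Gamma_{p_i,q_i}$-equivariant, these statements descend to $\CC^2/\Gamma_{p_i,q_i}$, which is what we need.

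For the completion statement, I would invoke the general construction of \cite[Section 2.2]{BEHWZ}: given an adjusted pair $(\omega,J)$ on a collar of $\Sigma_i$, attach the positive symplectisation $(\Sigma_i\times[0,\infty),d(e^s\lambda_i))$ to $V$ along $\Sigma_i$ (the attaching is well-defined because the flow of the Liouville vector field already identifies the collar with a piece of the symplectisation), and extend $J$ by translation-invariance in $s$, i.e.\ $\bar J\partial_s = R_i$ and $\bar J|_{\xi_i}=J|_{\xi_i}$ on each slice. The resulting $\overline{J|_V}$ is smooth across the seam precisely because $J$ was adjusted there, and it is canonical because both $\lambda_i$ and the translation extension are fixed by the choice of local model $\phi_i$.

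The only real content is therefore an explicit identification of the Reeb field and Liouville field in the model $\CC^2/\Gamma_{p_i,q_i}$; the rest is a formal application of the SFT completion machinery. The main obstacle, such as it is, is purely notational: keeping track that the contact form $\lambda_i$ appearing in Section \ref{sct:contgeom} is indeed the restriction of $x_1dy_1+x_2dy_2$ and that the chosen $\phi_i$ is a \emph{symplectic} identification, so that the Liouville vector field on the $X$-side matches the radial field on the orbifold side. Once this is set up, the check above is immediate, matching Vianna's argument in \cite[Example 3.3]{Vianna}.
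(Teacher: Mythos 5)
The paper gives no proof of this lemma: it is cited directly from Vianna's work (his Example 3.3), so there is no in-house argument to compare yours against. Your outline — reduce to checking that the standard K\"ahler structure $I_i$ on $\CC^2/\Gamma_{p_i,q_i}$ is cylindrical near the link, verify this upstairs in $\CC^2$ by equivariance, then apply the BEHWZ completion construction — is exactly the natural strategy and, I believe, what Vianna does.

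However, there is a factual error in the middle of your check that you should correct. The Liouville vector field for the primitive $\lambda = x_1\,dy_1 + x_2\,dy_2$ is \emph{not} the radial field; solving $\iota_Z\omega = \lambda$ with $\omega = dx_1\wedge dy_1 + dx_2\wedge dy_2$ gives $Z = x_1\partial_{x_1} + x_2\partial_{x_2}$. The half-radial field $\tfrac12\sum(x_j\partial_{x_j} + y_j\partial_{y_j})$ is instead the Liouville field for the \emph{radial} primitive $\lambda_{\mathrm{rad}} = \tfrac12\sum(x_j\,dy_j - y_j\,dx_j)$. This matters because $I_{\mathrm{std}}$ applied to $x_1\partial_{x_1}+x_2\partial_{x_2}$ gives $x_1\partial_{y_1}+x_2\partial_{y_2}$, which is not proportional to the Hopf field, so the standard complex structure is not cylindrical for the Liouville structure associated to $x_1\,dy_1+x_2\,dy_2$. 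Your subsequent computation — $I_{\mathrm{std}}$ sends the half-radial field to $\tfrac12\sum(x_j\partial_{y_j} - y_j\partial_{x_j})$, a positive multiple of the Hopf field — is correct, but it is the check for $\lambda_{\mathrm{rad}}$, not for the $\lambda$ appearing in Section \ref{sct:contgeom}. To make the argument watertight you should state explicitly that you are taking the Liouville structure on a collar of $\orb{\Sigma}_i$ to be the one induced by the radial primitive (equivalently, by the radial rescaling of $\CC^2$); the two primitives differ by an exact term so define the same symplectic form, and the induced contact forms on the link are isotopic, which is exactly the Gray-stability point the paper already signals. With that correction, the rest of your argument — descent by $\Gamma_{p_i,q_i}$-equivariance and the formal gluing of the positive symplectisation — goes through as written.
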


\begin{lma}[{\cite[Claim 3.1]{Vianna}}]\label{lma:endkaehler}
  The noncompact end of $\left(\overline{V},\overline{J|_V}\right)$ is isomorphic as a K\"ahler manifold to a neighbourhood of the negative end of $\coprod_{i=1}^N\left(\CC^2/\Gamma_{p_i,q_i}\right)\setminus\{(0,0)\}$.
\end{lma}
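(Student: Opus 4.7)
The plan is to extend the local Kähler identification $\phi_i$ on $U_i$ to the full cylindrical end of $\overline{V}$, exploiting the fact that the standard Kähler structure on $\CC^2/\Gamma_{p_i,q_i}$ is conical. First I would note that the radial vector field $Z_i=\tfrac{1}{2}\sum_j(x_j\partial_{x_j}+y_j\partial_{y_j})$ is Liouville for $\omega_i$ and preserves $I_i$; its flow $\psi_i^t$ therefore gives a Kähler isomorphism between a punctured neighbourhood of $(0,0)$ and the Liouville cylinder $\orb{\Sigma}_i\times(-\infty,0]$ equipped with its standard cylindrical Kähler structure $(d(e^s\alpha_i),J_{\mathrm{cyl}})$, where $\alpha_i$ is the contact form induced on $\orb{\Sigma}_i$ by the standard primitive of $\omega_i$.

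By construction, $\overline{V}$ attaches a cylindrical end $\Sigma_i\times[0,\infty)$ to $V$ along each $\Sigma_i$, carrying the symplectic form $d(e^t\alpha_i)$, and by Lemma \ref{lma:adjusted} the extension $\overline{J|_V}$ on this end is the SFT-cylindrical continuation of $J|_{\Sigma_i}$. Since $\phi_i$ is a symplectomorphism intertwining $J|_{U_i}$ with $I_i$, and the Liouville vector field is uniquely determined by a symplectic form together with its primitive, $\phi_i$ pushes the Liouville field of $V$ near $\Sigma_i$ to $Z_i$. Propagating by the respective Liouville flows therefore produces a symplectomorphism $\Phi_i\colon\Sigma_i\times[0,\infty)\to(\CC^2/\Gamma_{p_i,q_i})\setminus\{(0,0)\}$ given by $\Phi_i(x,t)=\psi_i^{-t}(\phi_i(x))$, whose image is a punctured neighbourhood of the origin.

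The actual content of the lemma is that $\Phi_i$ is also a biholomorphism, and the one step that requires care is checking that $\overline{J|_V}$ matches $\Phi_i^*I_i$ on the interior of the end. This reduces to the observation that $I_i$ is itself already SFT-cylindrical in a neighbourhood of $\orb{\Sigma}_i$: by $\psi_i^t$-invariance it sends $\partial_s$ to the Reeb vector field of $\alpha_i$ and preserves $\ker\alpha_i$. Since an adjusted almost complex structure on a symplectisation end is uniquely determined by its restriction to the contact distribution, $\overline{J|_V}$ and $\Phi_i^*I_i$ are both such extensions of the same contact data on $\Sigma_i\cong\orb{\Sigma}_i$ and must therefore coincide, producing the desired Kähler identification of each end of $\overline{V}$ with the negative (near-origin) end of the corresponding punctured $\CC^2/\Gamma_{p_i,q_i}$.
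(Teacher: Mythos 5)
The paper does not actually prove this lemma; it simply cites \cite[Claim 3.1]{Vianna}, so there is no in-paper argument to compare against. Your proposal is the natural and essentially correct argument: the standard conical K\"ahler structure on $\CC^2/\Gamma_{p_i,q_i}$ is invariant under the Liouville (radial) flow because scaling is holomorphic, $J$ is required to equal $\phi_i^*I_i$ on the collar $U_i$, and the cylindrical completion $\overline{J|_V}$ is by definition the unique translation-invariant extension of $J|_{\Sigma}$, which therefore has to agree with $I_i$. Two small points worth being careful about: the assertion that $I_i$ sends $\partial_s$ exactly to the Reeb vector field (rather than a constant positive multiple of it) only holds once the contact hypersurface $\orb{\Sigma}_i$ and contact form have been normalised appropriately, a bookkeeping choice that can always be made; and the claim that $\phi_i$ carries the Liouville field on $V$ near $\Sigma_i$ to $Z_i$ is not automatic for an arbitrary symplectomorphism but is part of the standard contact-neighbourhood normal form and can be built into the choice of $\phi_i$. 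Neither affects the substance of the argument.
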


This implies:

\begin{lma}\label{lma:jhat}
  Given an almost complex structure $J\in\mathcal{J}_\Sigma$ there is a unique almost complex structure $\orb{J}\in\mathcal{J}_{\orb{X}}$ such that $\left(\orb{X}\setminus\{\orb{x}_i\}_{i=1}^N,\orb{\omega},\orb{J}\right)$ is isomorphic to $\left(\overline{V},\overline{\omega|_V},\overline{J|_V}\right)$.
\end{lma}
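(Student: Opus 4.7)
\textbf{Proof plan for Lemma \ref{lma:jhat}.}
The plan is to build $\orb J$ by one-point compactifying each end of the Stein completion $\bar V$. First I would invoke Lemma \ref{lma:endkaehler}: a neighbourhood of infinity in each cylindrical end of $(\bar V,\overline{\omega|_V},\overline{J|_V})$ is K\"ahler-isomorphic to a punctured neighbourhood of $0\in\CC^2/\Gamma_{p_i,q_i}$. Using this identification, I would add the point at infinity in each end to form a topological orbifold $Y$, equipped with a symplectic form and almost complex structure that restrict to the standard K\"ahler $(\omega_i,I_i)$ near each added point and agree with $\overline{J|_V}$ elsewhere. In particular, the extension across $\orb x_i$ is automatic: the structure is literally $I_i$ in a neighbourhood of the new point.

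Next I would identify $Y$ with $\orb X$ as symplectic orbifolds. Both are assembled from $V$ by gluing in a closed neighbourhood of $\orb x_i$ in $\CC^2/\Gamma_{p_i,q_i}$ along $\Sigma_i\cong \orb\Sigma_i$. In $\orb X$ the gluing is by $\phi_i$; in $Y$ it is by the end identification of Lemma \ref{lma:endkaehler}. These identifications agree on an open collar of $\Sigma_i$, because $J$ was chosen to equal $\phi_i^* I_i$ on $U_i$, so the construction of the cylindrical end by Liouville flow of $\phi_i^* \lambda_{std}$ fits together with $\phi_i$ to extend it biholomorphically out to infinity; composing with the end identification yields precisely $\phi_i$ up to a K\"ahler automorphism of a neighbourhood of $\orb x_i$. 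Pushing $J_Y$ through this identification defines $\orb J$ on $\orb X$; it lies in $\mathcal{J}_{\orb X}$ by construction, and tautologically gives the required isomorphism on the complement of the singular points.

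Uniqueness is easy: if $\orb J_1,\orb J_2\in\mathcal{J}_{\orb X}$ both realise the isomorphism with $(\bar V,\overline{J|_V})$ on $\orb X\setminus\{\orb x_i\}$, they agree on this open dense set, and by definition of $\mathcal{J}_{\orb X}$ they both equal $I_i$ in $\orb B_i$, so they agree globally.

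The only step with any content is the compatibility of the Lemma \ref{lma:endkaehler} end identification with the original embedding $\phi_i$, which is really just unwinding definitions: the Liouville completion of $V$ near $\Sigma_i$ and the Liouville completion of $\orb B_i\setminus\{\orb x_i\}$ near $\orb\Sigma_i$ are canonically isomorphic extensions of the symplectomorphism $\phi_i$ between their collars, and both match the standard conical K\"ahler model on the punctured orbifold ball. Once this is spelled out, the proof is essentially a book-keeping exercise.
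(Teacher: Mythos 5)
Your proposal is correct and follows precisely the route the paper has in mind: the paper itself offers no separate proof, simply stating that Lemma \ref{lma:endkaehler} ``implies'' Lemma \ref{lma:jhat}, and your one-point compactification of the cylindrical ends using the K\"ahler identification of Lemma \ref{lma:endkaehler} is exactly the intended construction. You have usefully spelled out the compatibility of the end identification with $\phi_i$ and the (essentially definitional) uniqueness, which the paper leaves to the reader.
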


The following lemma is also implicit in \cite{Vianna}. See {\cite[Definition 2.1.3]{ChenRuan}} for the definition of an orbifold holomorphic curve.

\begin{lma}\label{lma:orbifoldcompactification}
  Let $(X,\omega,J)$ and $(\orb{X},\orb{\omega},\orb{J})$ be as above. Let $S$ be a punctured Riemann surface and $u\colon S\to\bar{V}$ be a proper finite-energy punctured $\overline{J|_V}$-holomorphic curve. There is a compact orbifold Riemann surface $\orb{S}$ and an orbifold $\orb{J}$-holomorphic map $\orb{u}\colon\orb{S}\to\orb{X}$ which extends $u$ (where we identify $\bar{V}$ with $\orb{X}\setminus\{\orb{x}_i\}_{i=1}^N$). Conversely, suppose that $\orb{u}\colon\orb{S}\to\orb{X}$ is an orbifold $\orb{J}$-holomorphic map from an orbifold Riemann surface $\orb{S}$ to $\orb{X}$ and let $S=\orb{u}^{-1}\left(\bar{V}\right)$. Then $u:=\orb{u}|_S\colon S\to\bar{V}$ is a finite-energy punctured holomorphic curve.
\end{lma}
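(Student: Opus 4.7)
The converse direction is immediate and I would dispose of it first. Given an orbifold holomorphic $\orb u\colon\orb S\to\orb X$, let $S:=\orb u^{-1}(\bar V)=\orb S\setminus\orb u^{-1}\{\orb x_1,\ldots,\orb x_N\}$, which is the complement of finitely many orbifold points in the compact orbifold $\orb S$. The restriction $u=\orb u|_S$ is a smooth pseudoholomorphic map into the honest manifold $\bar V$. Its Hofer energy coincides, via the identification of Lemma \ref{lma:endkaehler}, with the $\orb\omega$-area of $\orb u$, which is finite by compactness of $\orb S$.

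For the forward direction the content is purely local at each puncture, and I plan to proceed in three steps. First, by Lemma \ref{lma:adjusted} the almost complex structure is adjusted, so the SFT asymptotic analysis of \cite{BEHWZ} applies: on a punctured disc neighbourhood $D^*\subset S$ of a puncture $p$, the map $u$ converges exponentially in the cylindrical-end coordinates to a Reeb orbit $\gamma$ on some $\orb\Sigma_i$. Second, invoke Lemma \ref{lma:endkaehler} to replace the cylindrical end with a punctured neighbourhood of the orbifold point $\orb x_i\in\CC^2/\Gamma_{p_i,q_i}$, obtaining a finite-energy holomorphic map $u\colon D^*\to(\CC^2/\Gamma_{p_i,q_i})\setminus\{0\}$ whose image tends to $\orb x_i$.

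Third, remove the singularity by lifting through the orbifold cover. Small loops around $p$ map under $u$ to loops in $(\CC^2/\Gamma_{p_i,q_i})\setminus\{0\}$ representing an element $\bm g\in\Gamma_{p_i,q_i}$ determined by $\gamma$; let $N:=|\bm g|$, a divisor of $p_i^2$. Passing to the cyclic $N$-fold cover $w\mapsto w^N=z$ of $D^*$, the map $u$ admits a holomorphic lift $\tilde u\colon D^*\to\CC^2\setminus\{0\}$ of finite energy $N\cdot E(u|_{D^*})$. The classical removable singularity theorem then extends $\tilde u$ to $\tilde u\colon D\to\CC^2$ with $\tilde u(0)=0$. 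Declaring $p$ to be an orbifold point of $\orb S$ with isotropy $\ZZ/N$ and local chart $w\mapsto z=w^N$, the pair consisting of $\tilde u$ together with the inclusion $\ZZ/N\hookrightarrow\Gamma_{p_i,q_i}$ generated by $\bm g$ is precisely the data of an orbifold holomorphic map at $p$ in the sense of \cite[Definition 2.1.3]{ChenRuan}. Performing this construction at each puncture and gluing with $u$ on $S\subset\orb S$ yields the required $\orb u\colon\orb S\to\orb X$.

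The main obstacle, as I see it, is bookkeeping: I need the asymptotic formula of \cite{BEHWZ} for $u$ at $p$ to match the $S^1$-action on $\CC^2/\Gamma_{p_i,q_i}$ whose orbits are the Reeb flow, in a precise enough form to conclude that the covering $D^*\to D^*$ chosen above really trivialises the monodromy of $u$ and that the resulting $\tilde u$ is genuinely equivariant for the $\ZZ/N$-actions on source and target. Everything else — removal of singularities, finiteness of the energy, compactness of $\orb S$ — is then automatic, and the orbifold curve $\orb u$ is canonically determined by $u$.
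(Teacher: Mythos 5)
Your proposal is correct and follows essentially the same route as the paper: identify the negative ends of $\bar V$ with punctured neighbourhoods of the orbifold points via Lemma \ref{lma:endkaehler}, lift $u$ near each puncture to the uniformising cover over a finite cyclic cover of the punctured disc chosen to kill the monodromy in $\Gamma_{p_i,q_i}$, apply removal of singularities, and glue in the orbifold discs. The monodromy/equivariance bookkeeping you flag as a potential concern is exactly what the paper handles (tersely) by noting that $\tilde u_j$ is still asymptotic to a Reeb cylinder and that the deck group action extends over the filled-in disc, so there is no genuine gap.
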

\begin{proof}
  Consider a neighbourhood $N$ of a component of the negative end of $\overline{V}$, isomorphic to a neighbourhood of the negative end of $\left(\CC^2/\Gamma_{p_i,q_i}\right)\setminus\{0\}$. Let $C=u^{-1}(N)\subset S$. The map $u$ is asymptotic to a collection of Reeb cylinders, so, for sufficiently small $N$, $C=\bigcup_jC_j$ is a collection of annuli $C_j\cong D^2\setminus\{0\}$ in $S$. If we pass to the uniformising cover $\CC^2\to\CC^2/\Gamma_{p_i,q_i}$ then the map $u|_{C_j}$ lifts to a map $\tilde{u}_j\colon\tilde{C}_j\to\CC^2\setminus\{0\}$ defined on a finite covering space $\tilde{C}_j$ which is also an annulus. Let $\gamma_j$ denote the action of the deck group of this cover and let $\tilde{D}_j=\tilde{C}_j\cup\{0\}$ be a disc compactifying $\tilde{C}_j$; the deck group $\gamma_j$ continues to act on $\tilde{D}_j$ fixing the origin. The map $\tilde{u}_j$ is also asymptotic to a Reeb cylinder and hence extends continuously to a map $\orb{u}_j\colon D_j\to\CC^2$. By the removal of singularities theorem, $\orb{u}_j$ is holomorphic. Therefore we compactify $S$ by attaching the (orbifold) discs $D_j=\tilde{D}_j/\gamma_j$ along the annuli $C_j$. We use the lifts $\orb{u}_j$ to define an orbifold extension of $u$ over the orbifold discs $D_j$.

  For the converse, since $\orb{S}$ is compact, the curve $\orb{u}$ has finite symplectic area; hence $u$ has finite area and is a finite energy curve.
\end{proof}

\subsection{Homology classes of orbifold curves}\label{sct:homology}

A punctured finite energy curve $u\colon S\to \bar{V}$ defines a relative homology class $[u]\in H_2(V,\Sigma;\ZZ)$, and we can identify this relative homology group with $H_2(\orb{X};\ZZ)$ as $\orb{X}$ is homeomorphic to the quotient $V/\Sigma$. For a 4-dimensional orbifold $\orb{X}$, W.~Chen \cite{ChenOrbiAdj} defines an intersection pairing on the rational homology $H_2(\orb{X};\QQ)$ which coincides with the usual intersection pairing on the subset $V\subset\orb{X}$. In particular, this means that if $C\in H_2(X;\QQ)$ then $C^2=(\jmath_!C)^2$, where $\jmath_!\colon H_2(X;\QQ)\to H_2(V,\Sigma;\QQ)$ is induced by the inclusion map $\jmath\colon V\to X$.

\begin{rmk}
  An orbifold curve $C$ defines a class in $H_2(V,\Sigma;\ZZ)/T$, which can therefore be written as $[C]=D\eE$ for some $D\in m\ZZ$, where $m\in\ZZ$ is the integer from Lemma \ref{lma:homology}.
\end{rmk}

\begin{lma}
  Let $\eE\in H_2(V,\Sigma;\QQ)$ be the element introduced in Lemma \ref{lma:homology}. With respect to the intersection pairing, $\eE^2=1/\Delta^2$.
\end{lma}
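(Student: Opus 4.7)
The plan is to derive the identity by pulling back the self-intersection of a generator of $H_2(X;\ZZ)$ through $\jmath_!$ and invoking the compatibility of W.~Chen's orbifold intersection pairing with the ordinary one: the paragraph preceding the statement asserts that $C^2=(\jmath_! C)^2$ for any $C\in H_2(X;\QQ)$, and this is the only substantive fact I need.

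First I would fix a generator $h\in H_2(X;\ZZ)$ Poincar\'e dual to the generator $H\in H^2(X;\ZZ)$. In the setting of Theorem \ref{thm:markov} the ambient manifold is $X=\cp{2}$, so $h^2=1$. Applying Lemma \ref{lma:homology}(d) gives $\jmath_!(h)=\Delta\eE\in H_2(V,\Sigma;\QQ)$, and feeding this into Chen's compatibility identity with $C=h$ yields
\[1\,=\,h^2\,=\,(\jmath_! h)^2\,=\,(\Delta\eE)^2\,=\,\Delta^2\,\eE^2,\]
from which $\eE^2=1/\Delta^2$ by rearrangement.

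The argument is essentially a one-line computation, so there is no real obstacle; the genuine work was already packaged into Lemma \ref{lma:homology}(d) and into the compatibility of Chen's pairing with the classical one on $V$. The only subtlety worth flagging is that $\eE$ need not itself be an integral class (the torsion divisor $m$ of Lemma \ref{lma:homology} may be a proper divisor of $\Delta$), so the equality must be interpreted in $H_2(V,\Sigma;\QQ)$ and the resulting self-intersection $1/\Delta^2$ is in general a proper fraction.
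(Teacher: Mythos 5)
Your argument is correct and follows the same route as the paper: the paper's proof also starts from the unimodularity of the intersection form on $H_2(X;\ZZ)$ (so that a generator $H$ has $H^2=1$), invokes Lemma \ref{lma:homology}(d) to get $\jmath_!H=\Delta\eE$, and then appeals to the compatibility $C^2=(\jmath_!C)^2$ stated in the paragraph immediately preceding the lemma. You are merely a little more explicit in naming the Chen compatibility step and in flagging that $\eE$ is only a rational class, both of which are consistent with the paper.
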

\begin{proof}
  By Poincar\'e duality over $\ZZ$, the intersection pairing on $H_2(X;\ZZ)$ is unimodular, so if $H$ is a generator of $H_2(X;\ZZ)$, we have $H^2=1$. By Lemma \ref{lma:homology}, $\jmath_!H=\Delta\eE$, so we deduce that $\eE^2=1/\Delta^2$.
\end{proof}

\begin{lma}
  Suppose that $c_1(X)=kH$. The first Chern class $c_1(\orb{X})$ can be identified via pullback and Poincar\'e duality with the class $k\Delta\eE\in H_2(V,\Sigma;\QQ)$. Therefore, if we have an orbifold curve with homology class $C=D\eE$ then $c_1(\orb{X})\cdot C=\frac{kD}{\Delta}$.
\end{lma}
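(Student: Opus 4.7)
The plan is to restrict $c_1(\orb{X})$ to the smooth locus $V\subset\orb{X}$, where it agrees with $c_1(X)$, and then transport the answer to $H_2(V,\Sigma;\QQ)$ using Poincaré–Lefschetz duality and Lemma~\ref{lma:homology}(d). By construction (Lemma~\ref{lma:jhat}), the orbifold tangent bundle $(T\orb{X},\orb{J})$ restricts on $V$ to $(TV,J|_V)=(TX|_V,J|_V)$, so
\[c_1(\orb{X})|_V = c_1(X)|_V = kH|_V = k\jmath^*H \in H^2(V;\QQ).\]

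The next step is to check that $c_1(\orb{X})$ is actually recovered from this restriction. I would run Mayer–Vietoris on $\orb{X} = V \cup \coprod_i \orb{U}_i$, where $\orb{U}_i$ is a conical orbifold neighborhood of $\orb{x}_i$. Each $\orb{U}_i$ is rationally acyclic in positive degrees (the underlying space is a cone), and the overlap $\orb{U}_i\cap V$ deformation retracts onto $\Sigma_i=L(p_i^2,p_iq_i-1)$, a lens space with vanishing rational $H^1$ and $H^2$. The Mayer–Vietoris long exact sequence then forces the restriction map $H^2(\orb{X};\QQ)\to H^2(V;\QQ)$ to be an isomorphism, so $c_1(\orb{X})$ is uniquely determined by the class $k\jmath^*H$ identified above.

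Now Poincaré–Lefschetz duality identifies $H^2(V;\QQ)$ with $H_2(V,\Sigma;\QQ)$, sending $\jmath^*H$ to $\jmath_!H=\Delta\eE$ by Lemma~\ref{lma:homology}(d); Chen's orbifold Poincaré duality, which by construction is compatible with this on the smooth locus, identifies $H^2(\orb{X};\QQ)$ with $H_2(\orb{X};\QQ)\cong H_2(V,\Sigma;\QQ)$. Combining these, $c_1(\orb{X})$ is identified with $k\Delta\eE$. The pairing computation is then immediate:
\[c_1(\orb{X})\cdot C = (k\Delta\eE)\cdot(D\eE) = kD\Delta\cdot\eE^2 = \frac{kD}{\Delta},\]
using $\eE^2=1/\Delta^2$ from the preceding lemma. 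The only delicate point is confirming that Chen's orbifold intersection pairing matches the ordinary Poincaré duality pairing on the subspace $V\subset\orb{X}$, but this compatibility is explicit in \cite{ChenOrbiAdj} and was already invoked at the start of Section~\ref{sct:homology}.
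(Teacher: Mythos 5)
Your proposal is correct and follows essentially the same route as the paper: restrict $c_1(\orb{X})$ to $V$ using naturality of Chern classes, identify it with $k\jmath^*H$, pass through Poincaré--Lefschetz duality, and invoke Lemma~\ref{lma:homology}(d). The only difference is that you supply a Mayer--Vietoris justification for the isomorphism $H^2(\orb{X};\QQ)\cong H^2(V;\QQ)$, which the paper asserts without comment; this is a sound and welcome extra check, but not a different argument.
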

\begin{proof}
  By composing the pullback map along $V\hookrightarrow \orb{X}$ with the Poincar\'e-Lefschetz duality isomorphism, we get an isomorphism
  \[H^2(\orb{X};\QQ)\to H^2(V;\QQ)\to H_2(V,\Sigma;\QQ).\]
  Since the first Chern class is natural under pullback, $c_1(\orb{X})$ pulls back to $c_1(V)\in H^2(V;\QQ)$ which is also the pullback of $c_1(X)$ to $V$. Since $c_1(X)=kH$ and $H$ pulls back to $\Delta\eE\in H^2(V,\Sigma;\QQ)$ we see that $c_1(\orb{X})$ is identified with $k\Delta\eE\in H_2(V,\Sigma;\QQ)$.
\end{proof}

\subsection{Orbifold adjunction}

W. Chen has proved the following adjunction formula for orbifold holomorphic curves. Here we state it only in the case where the orbifold $\orb{X}$ has isolated singularities $\orb{X}_{\text{sing}}$ so that a generic point on the orbifold curve has isotropy of order one.

We introduce some notation. Let $\orb{S}$ be an orbifold Riemann surface, $f\colon \orb{S} \to \orb{X}$ a somewhere-injective orbifold holomorphic map, let $H_z$ be the isotropy group of $\orb{S}$ at $z\in\orb{S}$ and let $G_z$ denote the isotropy group of $\orb{X}$ at $f(z)$. Let $Z\subset \orb{S}$ be the collection of orbifold points, that is the set of points with $|H_z|>1$. Let $D$ be a disc neighbourhood of $z$, let $B\subset\CC^2$ be a ball and let $B\to\orb{X}$ be a local uniformising cover of a neighbourhood of the orbifold point $f(z)$. The orbifold holomorphic curve is defined by an injective homomorphism $\rho_z\colon H_z\to G_z$, a branched cover $\tilde{D}\to D$ with deck group $H_z$ and the collection of all lifts $\{\tilde{f}_\alpha\colon\tilde{D}\to B\}_{\alpha\in A}$ of $f$ to the local uniformising cover of $\orb{X}$ at $f(z)$, equivariant with respect to the actions of $H_z$ and $G_z$ and the homomorphism $\rho_z$:
\begin{equation}\label{eq:equivariance}\tilde{f}_\alpha(\zeta x)=\rho_z(\zeta)\cdot \tilde{f}_\alpha(x),\quad x\in \tilde{D}.\end{equation}
Here $A$ is just a set indexing all the lifts; since $f$ is somewhere-injective, there are $|G_z/H_z|$ of these lifts and between any two lifts (not necessarily distinct) there is a well-defined local intersection number $\tilde{f}_\alpha\cdot\tilde{f}_\beta$.  Define
\begin{equation}\label{eq:kz}k_z:=\frac{1}{|G_z|}\left(\sum_\alpha\tilde{f}_\alpha\cdot\tilde{f}_\alpha+\frac{1}{2}\sum_{\alpha\neq\beta}\tilde{f}_\alpha\cdot\tilde{f}_\beta\right).\end{equation}
  For distinct $z \neq z'\in\orb{S}$, if we denote the unordered pair as $[z,z']$, the number $k_{[z,z']}$ is defined using lifts $\tilde{f}_\alpha$ at $z$ and $\tilde{f}'_\beta$ at $z'$ to be equal to
  \begin{equation}\label{eq:kzz}k_{[z,z']}:=\frac{1}{|G_z|}\sum_{\alpha,\beta}\tilde{f}_\alpha\cdot\tilde{f}'_\beta.\end{equation}

\begin{thm}[{\cite[Theorem 3.1]{ChenOrbiAdj}}]
  Let $f\colon\orb{S}\to \orb{X}$ be a somewhere-injective orbifold holomorphic curve representing a homology class $C\in H_2(\hat{X};\QQ)$. Let $g_{|\orb{S}|}$ denote the genus of the underlying smooth Riemann surface. Then
  \begin{equation}\label{eq:adj}1+\frac{C\cdot C-c_1(\orb{X})\cdot C}{2}=g_{|\orb{S}|}+\frac{1}{2}\sum_{z\in Z}\left(1-\frac{1}{|H_z|}\right)+\sum_{z\in \orb{S}}k_z+\sum_{\mathclap{\substack{\orb{S}\ni z\neq z'\in \orb{S}\\f(z)=f(z')}}}k_{[z,z']}.\end{equation}
\end{thm}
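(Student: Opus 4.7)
The plan is to reduce \eqref{eq:adj} to the ordinary smooth adjunction formula applied to the lifts of $f$ to local uniformising covers around each orbifold point of $\orb{X}$, combined with a Riemann--Hurwitz accounting for the source isotropy. Away from $\orb{X}_{\text{sing}}$ and the preimages $f^{-1}(\orb{X}_{\text{sing}})$, the map is a smooth somewhere-injective $J$-holomorphic map between smooth surfaces, and the Micallef--White/McDuff adjunction formula gives
\[
1 + \tfrac{1}{2}\bigl(C\cdot C - c_1(X)\cdot C\bigr) \;=\; g_{|S|} + \sum_w \delta_w,
\]
with nonnegative local singularity defects $\delta_w$ at each singular point of the image. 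My task is then to upgrade each ingredient to the orbifold setting and to identify the additional terms.

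Near a point $z\in Z$ with $|H_z|>1$, pick a local uniformising chart $B\subset\CC^2\to B/G_z$ about $f(z)$. The defining data of the orbifold curve is exactly the family of lifts $\tilde f_\alpha$, which are $|G_z/H_z|$ somewhere-injective ordinary holomorphic maps from $\tilde D$ into $B$, intertwined by the action of $G_z$ via $\rho_z$ as in \eqref{eq:equivariance}. Chen's orbifold intersection pairing is designed so that the local contribution to $C\cdot C$ from a small neighbourhood of the image of $z$ equals $1/|G_z|$ times $\sum_{\alpha,\beta}\tilde f_\alpha\cdot\tilde f_\beta$, which after separating diagonal and (unordered) off-diagonal pairs becomes precisely the $k_z$ of \eqref{eq:kz}; similarly $k_{[z,z']}$ from \eqref{eq:kzz} appears whenever two distinct source orbifold points have coincident target. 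That this orbifold pairing agrees with the smooth pairing on the complement $V$ is forced by rational linearity together with continuity across the orbifold locus.

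The source contributes via Riemann--Hurwitz. The branched cover $\tilde D\to D$ at $z\in Z$ has Galois group $H_z$, so the orbifold Euler characteristic of $\orb{S}$ equals $\chi(|\orb{S}|) - \sum_{z\in Z}(1 - 1/|H_z|)$; halving this (to pass from Euler characteristic to arithmetic genus) produces the term $\tfrac{1}{2}\sum_{z\in Z}(1 - 1/|H_z|)$ in \eqref{eq:adj}. For the Chern term, $c_1(\orb{X})$ pulls back on each uniformising cover to the usual $c_1$ of $\CC^2$, and the bundle adjunction $c_1(TB)|_{\tilde f_\alpha}=c_1(T\tilde D)+c_1(\nu_{\tilde f_\alpha})$ on each lift, combined with Riemann--Hurwitz for the source branching, packages into the orbifold pairing $c_1(\orb{X})\cdot C$ used on the left of \eqref{eq:adj}.

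Assembling: apply smooth adjunction on the preimage of the smooth locus, fold in the local self-intersection contributions $k_z$ and coincidence contributions $k_{[z,z']}$, and correct the source Euler characteristic by the orbifold term. The main obstacle will be the second step, namely verifying that the equivariant local model forces the local contribution to $C\cdot C$ at a target orbifold point to equal exactly $k_z$ (respectively $k_{[z,z']}$). This requires positivity of intersection for the holomorphic lifts $\tilde f_\alpha,\tilde f_\beta$ in the uniformising cover, an unambiguous identification of the averaging factor $1/|G_z|$ (as opposed to $1/|G_z|^2$, which would be natural if one double-counted lifts), and care when $\rho_z$ has nontrivial kernel or when two distinct source orbifold points collide at a single target orbifold point. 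All three are finite-group bookkeeping problems built on the smooth adjunction baseline.
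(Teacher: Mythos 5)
The paper does not prove this statement; it quotes \cite[Theorem~3.1]{ChenOrbiAdj} as a black box and the remarks that follow the theorem only unpack notation. So there is no internal proof to compare against, and your attempt must be judged on its own merits as a reconstruction of Chen's argument.

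The overall strategy (reduce to the smooth Micallef--White/McDuff adjunction away from the orbifold locus, Riemann--Hurwitz for the source branching, a weighted count over uniformising-cover lifts near orbifold points) is the right flavour, and the source-side bookkeeping is correct: $\chi^{\mathrm{orb}}(\orb{S})=\chi(|\orb{S}|)-\sum_{z\in Z}(1-1/|H_z|)$ does indeed convert $g_{|\orb{S}|}$ to the orbifold genus term. However, the step you single out as ``the main obstacle'' is not merely incomplete; as stated it is based on a misidentification. You claim that the local contribution to $C\cdot C$ near $f(z)$ equals $\tfrac{1}{|G_z|}\sum_{\alpha,\beta}\tilde f_\alpha\cdot\tilde f_\beta$ and that this ``becomes precisely the $k_z$ of \eqref{eq:kz}'' after grouping. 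It does not: equation~\eqref{eq:kz} carries a $\tfrac12$ on the off-diagonal terms, so $\tfrac{1}{|G_z|}\sum_{\alpha,\beta}\tilde f_\alpha\cdot\tilde f_\beta = k_z + \tfrac{1}{|G_z|}\sum_{\alpha<\beta}\tilde f_\alpha\cdot\tilde f_\beta$, which exceeds $k_z$ whenever two distinct lifts pass through the origin. More fundamentally, $k_z$ is not a local contribution to the homological self-intersection $C\cdot C$ at all. It is the orbifold analogue of the genus defect $\delta_w$ on the \emph{right} of the smooth adjunction formula: note that $\tilde f_\alpha\cdot\tilde f_\alpha$ is Chen's local virtual double-point count (zero when the lift is embedded), not an intersection of the curve with a pushed-off copy, and $k_z$ vanishes exactly when the curve is a suborbifold at $z$, as the paper's Remark after the theorem emphasises. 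Meanwhile $C\cdot C$ is a global cohomological pairing on $H_2(\orb{X};\QQ)$, fixed by the class alone and insensitive to how the representative sits at the orbifold points. Conflating these two quantities means your proposed assembly equates things that live on opposite sides of the formula.

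A further, technical gap: one cannot ``apply smooth adjunction on the preimage of the smooth locus'' directly, since that preimage is a punctured surface, not a closed one. The correct version must excise small balls around orbifold points and control the boundary terms (relative Chern number and relative self-intersection) as the excision radius shrinks; precisely those boundary terms are where $k_z$, $k_{[z,z']}$ and the $\tfrac{1}{|H_z|}$ corrections arise, and your sketch passes over how those limits are taken and why the resulting weights are $1/|G_z|$ rather than $1/|G_z|^2$ or $1/|H_z|$. Without that analysis the reduction does not close, so the attempt as written has a genuine gap rather than just omitted routine detail.
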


\begin{rmk}
  The local intersection number of two curves at a point $y$ is nonnegative. It is one if and only if the two curves are embedded at $y$ and intersect one another transversely. It is zero if and only if the two curves are identical and embedded at $z$; in the sum for $k_{[z,z']}$, the two branches of the curve passing through $f(z)=f(z')$ with $z\neq z'$ are considered to be different even if their geometric image coincides, so $k_{[z,z']}$ is always positive if there are distinct $z$ and $z'$ in $\orb{S}$ mapping to the same point in $\orb{X}$. By contrast, it is possible for $k_z$ to be zero if $G_z=H_z$ and the single lift $\tilde{f}$ is embedded at $z$, in other words if the curve is a {\em suborbifold} at $z$.
\end{rmk}

\begin{rmk}\label{rmk:simplify}
  In our case, the group $G_z$ is a cyclic group. If we pick $\zeta\in G_z$ and write $\alpha\mapsto\zeta\alpha$ for its action on the indices of the branches $\tilde{f}_\alpha$ then we see that
  \[\tilde{f}_\alpha\cdot\tilde{f}_\beta=\tilde{f}_{\zeta\alpha}\cdot\tilde{f}_{\zeta\beta}\]
  so that many terms in the sum defining $k_z$ are repeated. Therefore,
  \[k_z=\frac{1}{2|H_z|}\left(2N_0+\sum_{i=1}^{\mathclap{|G_z/H_z|-1}} N_i\right),\]
  where $N_i=\tilde{f}_\alpha\cdot\tilde{f}_{\zeta^i\alpha}$. We define
  \[K_z:=2N_0+\sum_{i=1}^{\mathclap{|G_z/H_z|-1}}N_i,\]
  so that $k_z=K_z/2|H_z|$.
\end{rmk}

\begin{rmk}\label{rmk:locmod}
Recall that we assume a neighbourhood of each singular point $f(z)\in \orb{X}_{\mathrm{sing}}$ is biholomorphic to a neighbourhood of zero in the standard model $\CC^2/G_z$. In this case, any component of the lift $\tilde{f}_{\alpha}(z)$ can be written as $\left(\sum_{i=0}^\infty a_iz^{Q_i},\sum_{i=1}^\infty b_iz^{R_i}\right)$ for some power series $\sum a_iz^{Q_i}$ and $\sum b_iz^{R_i}$ with $a_0,b_1\neq 0$, convergent on a neighbourhood of $0$. Suppose that $Q_0<R_1$; equivalently, we assume that $\CC\times\{0\}\subset\CC^2$ is the tangent plane of the branched minimal immersion $\tilde{f}_{\alpha}$ at the origin. Since $a_0\neq 0$, we may take a branch $F(z)$ of $\sqrt[\leftroot{-4}\uproot{4}Q]{\sum_{i=0}^\infty a_iz^{Q_i-Q}}$ around $z=0$ and use the local coordinate $w=zF(z)$ on the domain; in these coordinates, the orbifold curve is $f(w)=(w^{Q},c_1w^{R_1}+c_2w^{R_2}+\cdots)$ (where we now set $Q=Q_0$). This is a simple version of the local model for branched minimal immersions established by Micallef and White \cite{MW}.
\end{rmk}

\begin{rmk}\label{rmk:equiv}
  The equivariance condition \eqref{eq:equivariance} gives some constraints on the exponents $Q$, $R_1$, $R_2,\ldots$. Given a $|H_z|$th root of unity $\zeta$, suppose that the action of $\rho_z(\zeta)$ on $(z_1,z_2)$ is $(\zeta^{m_1}z_1,\zeta^{m_2}z_2)$ for some $0\leq m_1,m_2<|H_z|$. Then Equation \eqref{eq:equivariance} becomes
\[\left(\zeta^Qz^Q,\sum b_i\zeta^{R_i}z^{R_i}\right)=\left(\zeta^{m_1}z^Q,\sum b_i\zeta^{m_2}z^{R_i}\right)\]
which implies $Q\equiv m_1\mod |H_z|$ and $R_i\equiv m_2\mod |H_z|$.
\end{rmk}

We can express the adjunction contributions $k_z$ in terms of the exponents $Q$, $R_1$, $R_2,\ldots$ as follows.

\begin{lma}\label{lma:jet}
  \begin{enumerate}
  \item Let $u(z)=\left(z^Q,h(z)\right)$ be a germ of a somewhere-injective holomorphic curve in $\CC^2$ with $h(z)=\sum_{i=1}^\infty a_iz^{R_i}$. Let $K$ denote the local adjunction contribution from the singularity at $(0,0)$. Then
    \[K-1\geq QR_1-Q-R_1\]
    with equality if and only if $\gcd(Q,R_1)=1$. Suppose, moreover, that $R_i\equiv R_1\mod d$ for some $d$ and for all $i$. Then $K-1\neq QR_1-Q-R_1$ implies $K-1>d$.
  \item Let $u_i(z)=\left(z^Q,h_i(z)\right)$, $i=0,1$, be germs of somewhere-injective holomorphic curves in $\CC^2$ with $h_i(z)=\sum_{j=1}^\infty a_{i,j}z^{R_j}$. Then the local intersection contribution $\nu$ to $u_0\cdot u_1$ from the intersection point $(0,0)$ satisfies $\nu\geq QR_1$. Suppose, moreover, that $R_j\equiv R_1\mod d$ for some $d$ and for all $j$. Then strict inequality $\nu\neq QR_1$ implies $\nu>d$.
  \end{enumerate}
\end{lma}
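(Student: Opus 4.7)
The plan is to recognise $K$ and $\nu$ as classical invariants of plane curve singularities and to compute them directly from the Puiseux parametrisations.

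For Part (1), I would first identify $K$ with twice the delta invariant $\delta$ of the germ $u$; this is the standard computation of the orbifold-free adjunction defect at a plane curve singularity, and it is already implicit in the Micallef--White local model cited in Remark \ref{rmk:locmod}. Zariski's formula then expresses $\delta$ in terms of the characteristic Puiseux pairs $(\beta_0,\beta_1,\ldots,\beta_g)$ with gcd chain $e_k=\gcd(\beta_0,\ldots,\beta_k)$:
\[
2\delta = \sum_{k=1}^g (e_{k-1}-e_k)\beta_k - \beta_0 + 1.
\]
Here $\beta_0=Q$, $\beta_1=R_1$ and $e_g=1$ by somewhere-injectivity. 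If $\gcd(Q,R_1)=1$ then $g=1$ and the formula collapses to $K-1=(Q-1)R_1-Q=QR_1-Q-R_1$, giving the advertised equality. Otherwise $e_1=\gcd(Q,R_1)\geq 2$, so $g\geq 2$ and there is a second characteristic exponent $\beta_2>R_1$ realised by some $R_j$, producing strict inequality.

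For the refined statement under the congruence hypothesis I would rearrange Zariski's formula, using the telescoping identity $\sum_k(e_{k-1}-e_k)=Q-1$, as
\[
2\delta-\bigl((Q-1)R_1-Q+1\bigr) \;=\; \sum_{k=2}^g (e_{k-1}-e_k)(\beta_k-R_1).
\]
The $k=2$ summand alone is at least $(e_1-e_2)(\beta_2-R_1)\geq 1\cdot d=d$, since $e_1-e_2\geq 1$ and $\beta_2-R_1\geq d$ (any $R_j>R_1$ satisfies $R_j\geq R_1+d$ by the congruence $R_i\equiv R_1\pmod d$); the remaining summands are non-negative. Hence $K-1\geq QR_1-Q-R_1+d$. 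Under $Q<R_1$ (Remark \ref{rmk:locmod}) and the strict case $\gcd(Q,R_1)\geq 2$ (forcing $Q\geq 2$ and $R_1\geq 3$) one checks $QR_1-Q-R_1\geq 1$, so $K-1>d$.

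For Part (2) I would write a defining equation for $u_1$ as a Weierstrass polynomial
\[
F_1(x,y)=\prod_{\eta^Q=1}\bigl(y-h_1(\eta x^{1/Q})\bigr)\in\CC\{x\}[y]
\]
(the product is Galois-invariant in $x^{1/Q}$, hence descends to $\CC\{x\}$), and compute the intersection multiplicity as
\[
\nu=\OP{ord}_z F_1(z^Q,h_0(z))=\sum_{\eta^Q=1}\OP{ord}_z\bigl(h_0(z)-h_1(\eta z)\bigr).
\]
The $\eta$-factor expands as $\sum_j(a_{0,j}-a_{1,j}\eta^{R_j})z^{R_j}$, which has $z$-order at least $R_1$; summing over the $Q$ values of $\eta$ gives $\nu\geq QR_1$. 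In the strict case some $\eta$-factor has order $>R_1$, and under the congruence hypothesis its next non-vanishing exponent satisfies $R_j\geq R_1+d$, so that single factor contributes at least $R_1+d$ while the remaining $Q-1$ factors contribute at least $R_1$ each, giving $\nu\geq QR_1+d>d$.

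The main technical subtlety lies in Part (1): one must confirm that the abstract second characteristic Puiseux exponent $\beta_2$ is in fact realised by an exponent $R_j$ that actually appears in $h(z)$, so that the congruence hypothesis on the $R_i$ applies to $\beta_2$. Somewhere-injectivity $\gcd(Q,R_1,R_2,\ldots)=1$ ensures that some $R_j$ produces a further drop in the gcd chain beyond $e_1$, and the minimal such $R_j$ is by definition $\beta_2$; with this point pinned down, the Zariski-formula estimate closes out the argument.
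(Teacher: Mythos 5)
Your proof is correct and follows essentially the same route as the paper's. In Part~(1), the paper invokes Milnor's formula $K=\sum_j(g_j-g_{j+1})(R_j-1)$ with $g_j=\gcd(Q,R_1,\ldots,R_{j-1})$, which is precisely Zariski's formula for $2\delta$ once the non-characteristic exponents (whose gcd-drops vanish) are discarded; your rearrangement $K-1-(QR_1-Q-R_1)=\sum_{k\geq 2}(e_{k-1}-e_k)(\beta_k-R_1)\geq d$ together with $QR_1-Q-R_1\geq 1$ is a mild variant of the paper's estimate $K-1\geq(g_k-g_{k+1})(R_k-1)-1\geq R_k-2>d$, and both rest on the same observation that the second characteristic exponent is one of the $R_j$ and hence exceeds $R_1$ by at least $d$. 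In Part~(2) you re-derive via the Weierstrass polynomial the intersection-multiplicity formula $\nu=\sum_{\eta^Q=1}\OP{ord}_z(h_0(z)-h_1(\eta z))$ that the paper takes directly from Micallef--White, and then run the same two-line estimate.
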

\begin{proof}
  \begin{enumerate}
  \item Define $g_j=\gcd(Q,R_1,R_2,\ldots,R_{j-1})$. Since the orbifold map is some\-where-injective it is not multiply-covered, and hence there exists $M>0$ such that $g_m=1$ for all $m>M$. Milnor {\cite[Remark 10.10]{Milnor}} (see also {\cite[Remark after Theorem 7.3]{MW}}) gives a formula for the local adjunction contribution in terms of $g_j$:
    \begin{gather*}K=(Q-g_2)(R_1-1)+(g_2-g_3)(R_2-1)+\cdots\\
      \cdots+(g_{M}-1)(R_M-1)\end{gather*}
    We see that $K-1\geq QR_1-Q-R_1$ with equality if and only if $M=1$, that is, if and only if $\gcd(Q,R_1)=1$.

    Suppose that $K-1>QR_1-Q-R_1$. Then, since $\gcd(Q,R)\neq 1$ we must have $Q\geq 2$. Moreover, since $R_1>Q$ we must have $R_1>2$. Since $R_k=R_1\mod d$ and $R_k>R_1$ we also have $R_k>d+2$ for $k\geq 2$. Let $k\geq 2$ be minimal such that $g_k>g_{k+1}$ (which exists because $g_2\neq 1$ and $g_{M+1}=1$).

    We have
    \begin{align*}
      K-1&\geq (g_k-g_{k+1})(R_k-1)-1\\
      &\geq R_k-2\\
      &>d\mbox{ since }k\geq 2.
    \end{align*}
  
\item The local intersection contribution $\nu$ is given by Equation 2 in {\cite[Theorem 7.1]{MW}}:
  \[\nu=\sum_{\zeta\in\rt_Q}\mbox{order of vanishing of }h_0(\zeta z)-h_1(z).\]
  For each $\zeta\in\rt_Q$, the first potentially nonzero term in $h_0(\zeta z)-h_1(z)$ is $\left(a_{1,1}\zeta^{R_1}-a_{2,1}\right)z^{R_1}$ which means $h_0(\zeta z)-h_1(z)$ has order at least $R_1$. Therefore $\nu\geq QR_1$. Inequality means that, for some $\zeta$, the highest order term in $h_0(\zeta z)-h_1(z)$ is $\left(a_{1,j}\zeta^{R_j}-a_{2,j}\right)z^{R_j}$, which has order $R_j>d$. This gives $\nu>d$ as required.
  \end{enumerate}
\end{proof}
  
\begin{cor}\label{cor:gcd}
  Suppose we have an orbifold $\orb{X}$ with a singularity at $\orb{x}$ of type $\frac{1}{p^2}(pq-1,1)$ and an orbifold holomorphic curve $f\colon\orb{S}\to\orb{X}$ with $f(z)=\orb{x}$ for some point $z\in\orb{S}$ with $|H_z|=d_z$. We have
  \[K_z-1\geq\frac{p^2}{d_z}QR_1-Q-R_1\]
  with equality only if $\gcd(Q,R_1)=1$; strict inequality implies $K_z-1>d_z$.
\end{cor}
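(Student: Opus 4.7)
The plan is to decompose $K_z$ via Remark \ref{rmk:simplify} and then bound each piece using Lemma \ref{lma:jet}. Write
\[K_z = 2N_0 + \sum_{i=1}^{p^2/d_z - 1} N_i\]
where $N_0 = \tilde{f}_\alpha \cdot \tilde{f}_\alpha$ is the local self-intersection of a single branch and $N_i = \tilde{f}_\alpha \cdot \tilde{f}_{\zeta^i \alpha}$ is the local intersection of two distinct branches. I would place each branch in the Micallef--White normal form $(w^Q, \sum_j b_j w^{R_j})$ of Remark \ref{rmk:locmod}; the equivariance of Remark \ref{rmk:equiv} then guarantees that all exponents $R_j$ are congruent modulo $d_z = |H_z|$, which is exactly the hypothesis needed to apply the sharp forms of Lemma \ref{lma:jet} with $d = d_z$.

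Applying Lemma \ref{lma:jet}(1) to the single branch gives $N_0 \geq QR_1 - Q - R_1 + 1$, while Lemma \ref{lma:jet}(2) applied to each pair $(\tilde{f}_\alpha, \tilde{f}_{\zeta^i \alpha})$ gives $N_i \geq QR_1$. Summing yields
\[K_z \geq 2(QR_1 - Q - R_1 + 1) + \left(\frac{p^2}{d_z} - 1\right)QR_1 = \frac{p^2}{d_z} QR_1 - Q - R_1 + 1 + (Q-1)(R_1-1),\]
and nonnegativity of $(Q-1)(R_1-1)$ yields the claimed inequality. Equality forces tightness in Lemma \ref{lma:jet}(1), whose equality clause then forces $\gcd(Q, R_1) = 1$.

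The strict inequality clause is proved by a case analysis on the source of the slack. If Lemma \ref{lma:jet}(1) is strict (that is, $\gcd(Q, R_1) > 1$), its quantitative refinement gives $N_0 \geq d_z + 2$, so $K_z - 1 \geq 2d_z + 3 > d_z$. If Lemma \ref{lma:jet}(2) is strict for some $i$, a careful reading of its proof shows the excess is quantized by $d_z$ as $N_i \geq QR_1 + d_z$, and plugging this back into the sum (using that $p^2/d_z \geq 2$ is automatic here) yields $K_z - 1 > d_z$. Finally, if the strictness comes only from the $(Q-1)(R_1-1) > 0$ term, with both inequalities in Lemma \ref{lma:jet} tight, then $Q, R_1 \geq 2$ and $\gcd(Q, R_1) = 1$; in this case one uses the mod-$d_z$ constraint from Remark \ref{rmk:equiv} to argue that $R_1 \geq d_z + 1$, whence $(Q-1)(R_1-1) \geq d_z$ and again $K_z - 1 > d_z$.

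The main obstacle I anticipate is the last case: the quantization by $d_z$ of slack that arises purely from $(Q-1)(R_1-1)$ is not present in the bare statement of Lemma \ref{lma:jet} and must instead be extracted from the specific equivariant structure of the $\frac{1}{p^2}(pq-1, 1)$ singularity. Tracking this carefully, together with the mod-$d_z$ bookkeeping for the other two cases, is what closes the sharp conclusion $K_z - 1 > d_z$ whenever the first inequality is strict.
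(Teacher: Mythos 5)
Your decomposition of $K_z$ and the overall strategy match the paper, but there is a factor-of-two error at the very first step that propagates and creates the ``main obstacle'' you flag at the end. You write that $N_0 = \tilde{f}_\alpha \cdot \tilde{f}_\alpha$ is the local self-intersection of a single branch and then claim that Lemma \ref{lma:jet}(1) gives $N_0 \geq QR_1 - Q - R_1 + 1$. But Lemma \ref{lma:jet}(1) bounds the \emph{local adjunction contribution} $K$, which is twice the delta-invariant of the branch; in the notation of Remark \ref{rmk:simplify}, the quantity controlled by the lemma is $K = 2N_0$, not $N_0$. (The paper's proof says this explicitly: ``$2N_0$ is the local adjunction contribution for each $\tilde{f}_\alpha$.'') Using the correct identification, the sum gives
\[
K_z = 2N_0 + \sum_{i=1}^{p^2/d_z - 1} N_i \;\geq\; \bigl(QR_1 - Q - R_1 + 1\bigr) + \Bigl(\frac{p^2}{d_z}-1\Bigr)QR_1 = \frac{p^2}{d_z}QR_1 - Q - R_1 + 1,
\]
which is exactly the stated bound with no extra $(Q-1)(R_1-1)$ term.

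The $(Q-1)(R_1-1)$ term you obtain is an artifact of doubling the wrong quantity, and the resulting inequality is in fact false: take a branch that is a simple cusp, so $Q=2$, $R_1 = 3$, $N_0 = \delta = 1$, $2N_0 = K = 2$, and suppose each $N_i = QR_1 = 6$; then $K_z = 6\,p^2/d_z - 4$, which meets the paper's bound with equality but violates yours by the amount $(Q-1)(R_1-1) = 2$. As a consequence your equality analysis would wrongly force $Q=1$ or $R_1=1$, whereas the corollary only concludes $\gcd(Q,R_1)=1$. The third case in your strict-inequality analysis --- slack coming ``purely from $(Q-1)(R_1-1)>0$'' --- does not exist once the factor of $2$ is put in the right place: the paper's argument is a clean two-case split (either $2N_0-1 > QR_1-Q-R_1$, handled by Lemma \ref{lma:jet}(1), or some $N_i > QR_1$, handled by Lemma \ref{lma:jet}(2)), and your correctly-anticipated difficulty evaporates. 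Your bookkeeping for the $\mathrm{mod}\,d_z$ congruence of the exponents via Remark \ref{rmk:equiv} is fine and is what makes the sharp clauses of Lemma \ref{lma:jet} applicable.
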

\begin{proof}
  We have a total of $p^2/d_z$ lifts, $\tilde{f}_\alpha$, of $f$ to the local uniformising cover, which are locally given by $\tilde{f}_\alpha(z)=\left(z^Q,\sum a_{\alpha,i}z^{R_i}\right)$. Recall that
  \[K_z=2N_0+N_1+\cdots+N_{(p^2/d_z)-1}\]
  where $2N_0$ is the local adjunction contribution for each $\tilde{f}_\alpha$ (independent of $\alpha$ since the lifts are conjugated by the residual $G_z/H_z$-action) and $N_i$ ($i\neq 0$) is the local intersection contribution $\tilde{f}_\alpha\cdot\tilde{f}_{\zeta^i\cdot\alpha}$. From Lemma \ref{lma:jet}, we get
  \begin{align*}
    2N_0&\geq QR_1-Q-R_1+1,\\
    N_i&\geq QR_1,\quad i\neq 0,
  \end{align*}
  so
  \[K_z-1\geq\frac{p^2}{d_z}QR_1-Q-R_1.\]
  Equality implies that $2N_0=QR_1-Q-R_1+1$, which implies $\gcd(Q,R_1)=1$. Strict inequality implies that either $2N_0-1>QR_1-Q-R_1$ or $N_i>QR_1$. In the first case, Lemma \ref{lma:jet}(1) implies that $2N_0-1>d_z$; in the second case, Lemma \ref{lma:jet}(2) implies that $N_i>d_z$. In either case, we see that $K_z-1>d_z$.
\end{proof}

\subsection{Virtual dimension}

  W. Chen {\cite[Section 1]{ChenPseudo}} gives the following formula for the virtual complex dimension of the moduli space of orbifold holomorphic maps $f\colon\orb{S}\to\orb{X}$ in the class $[\orb{C}]$ where $\orb{S}$ is an orbifold Riemann surface with underlying smooth surface of genus zero and with orbifold points $Z\subset\orb{S}$:
  \begin{equation}\label{eq:vdim}d=c_1(\orb{X})\cdot[\orb{C}]+2-(3-|Z|)-\sum_{z\in Z}\frac{m_{1,z}+m_{2,z}}{|H_z|}\end{equation}
  where $0<m_{1,z},m_{2,z}<|H_z|$ are integers related to the homomorphism $\rho_z\colon H_z\to G_{z}$ (where $G_{z}$ is the isotropy group at $f(z)$). Specifically, in local coordinates on the uniformising cover $\CC^2\to\CC^2/G_z$ at $f(z)$,
  \[\rho_z(\zeta)(z_1,z_2)=(\zeta^{m_{1,z}}z_1,\zeta^{m_{2,z}}z_2).\]
  If, with respect to these coordinates, a lift of $f$ to the uniformising cover is given by $\tilde{f}_\alpha(z)=\left(z^Q,\sum a_iz^{R_i}\right)$ then, by Remark \ref{rmk:equiv}, we have $Q=m_{1,z}\mod |H_z|$ and $R_i=m_{2,z}\mod |H_z|$.
  
  \begin{rmk}\label{rmk:vdim}
    Recall from Section \ref{sct:contgeom} that, unless $p=2$, there are two exceptional Reeb orbits in $\Sigma_{p,q}$; these are the intersections of $\Sigma_{p,q}$ with the coordinate lines $\CC\times\{0\}$ and $\{0\}\times\CC$. In the case when $\Gamma=\Gamma_{p,q}$, if the punctured curve $C$ is asymptotic to one of the exceptional Reeb orbits in $\Sigma_{p,q}$ then each lift $\tilde{f}_{\alpha}$ of the orbifold curve $\orb{C}$ is tangent to the corresponding complex plane $\CC\times\{0\}$ or $\{0\}\times\CC$ in $\CC^2$. Since the action of $\Gamma_{p,q}$ on $\CC^2$ is
  \[\rho(\zeta)(z_1,z_2)=(\zeta z_1,\zeta^{pq-1}z_2),\]
  we know from Remark \ref{rmk:equiv} that either $Q=m_{1,z}=1\mod |H_z|$ and $R_1=m_{2,z}=pq-1\mod |H_z|$ or else $Q=m_{2,z}=pq-1\mod |H_z|$ and $R_1=m_{1,z}=1\mod |H_z|$. We also note that
  \[(pq-1)^{-1}=-pq-1\mod p^2\]
  and $|H_z|$ divides $p^2$. Thus $R_1=Q(\pm pq-1)\mod |H_z|$; we see that this sign ambiguity is inherited from the sign ambiguity mentioned in Remark \ref{rmk:signamb}.

  Suppose that $z$ is the only point in $\orb{C}$ which maps to the orbifold point $\orb{x}$ and that the homology class $[\orb{C}]$ maps to a nonzero class in $H_1(\Sigma_{p,q};\ZZ)$ under the connecting homomorphism. Then the corresponding punctured curve $C$ is asymptotic to a multiple of one of the exceptional Reeb orbits, as the generic Reeb orbits are homologically trivial in $\Sigma_{p,q}$. This will be the case whenever we need to make use of this remark.
  \end{rmk}
  
\section{Curves in the orbifold}\label{sct:orbicurve}

\subsection{Standing notation}

We first establish some standing notation for this section.

Let $X=\cp{2}$, let $N$ be a positive integer and let $B_i\subset X$, $i=1,\ldots,N$, be collection of pairwise disjoint symplectic embeddings of rational homology balls $B_i\cong B_{p_i,q_i}$ where
\[p_1<p_2<\cdots<p_N.\]
Define $\Delta:=\prod_{i=1}^Np_i$. Let $\Sigma_i$ denote the boundary $\partial B_i$, let $\Sigma=\bigcup_{i=1}^N\Sigma_i$, $B=\bigcup_{i=1}^NB_i$, and let $V=X\setminus B$. Let $\orb{X}_{\mathrm{sing}}=\{\orb{x}_i\}_{i=1}^N$ denote the set of orbifold points of $\orb{X}$ and let $\orb{J}\in\mathcal{J}_{\orb{X}}$. Recall that we have a class $\eE\in H_2(V,\Sigma;\QQ)$ such that $\eE^2=1/\Delta^2$ such that any surface in $H_2(V,\Sigma;\ZZ)$ is a multiple of $\eE$.

Let $\orb{C}$ be a somewhere-injective $\orb{J}$-holomorphic orbifold curve in $\orb{X}$. Let $f\colon\orb{S}\to\orb{X}$ be a $\orb{J}$-holomorphic parametrisation of $\orb{C}$ where $\orb{S}$ is some orbifold Riemann surface whose underlying topological surface has genus zero. Let $Z\subset \orb{S}$ denote the set of orbifold points on $\orb{S}$.

\subsection{Low degree curves}\label{sct:lowdeg}

Recall (Lemma \ref{lma:homology}(d)) that under the natural map
\[\jmath_!\colon H_2(X;\QQ)\to H_2(V,\Sigma;\QQ)\cong H_2(\orb{X};\QQ),\]
the class of a complex line is sent to $\Delta\eE$. Let $[\orb{C}]=D\eE\in H_2(\orb{X};\QQ)$ be the homology class of $\orb{C}$. We will now assume that $D\leq\Delta$ and see what restrictions this imposes on $\orb{C}$.

The adjunction formula, together with Remark \ref{rmk:simplify} and the fact that the underlying curve $\orb{S}$ has genus zero, says that:
\begin{equation}\label{eq:adj2}
  1=\frac{3\Delta D-D^2}{2\Delta^2}+\sum_{z\in Z}\frac{1}{2}\left(1-\frac{1}{d_{z}}\right)+\sum_{z\in \orb{S}} \frac{K_{z}}{2d_{z}}+\sum_{\mathclap{\substack{\orb{S}\ni z\neq z'\in \orb{S}\\f(z)=f(z')}}}k_{[z,z']},
\end{equation}
where $d_{z}$ is the size of the isotropy group $H_{z}$ for the curve $\orb{C}$ at $z$.

\begin{lma}\label{lma:kzzz}
  Suppose that $D\leq\Delta$. If $f(z)=f(z')$ then $z,z'\in Z$. If $k_z\neq 0$ then $z\in Z$. In other words, the only $k_z$ and $k_{[z,z']}$ contributions can come from $z,z'\in Z$ and the sums $\sum_{z\in \orb{S}}k_z$ and $\sum_{\substack{\orb{S}\ni z\neq z'\in\orb{S}\\f(z)=f(z')}} k_{[z,z']}$ in Equation \eqref{eq:adj2} reduce to a sum over $z,z'\in Z$.
\end{lma}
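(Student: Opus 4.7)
The plan is to extract an upper bound on the total singularity contribution directly from the adjunction formula \eqref{eq:adj2}, then to show that any nonzero contribution coming from a point of $\orb{S}$ outside $Z$ already exceeds this bound. The starting observation is elementary: for $D\le\Delta$ we have $3\Delta-D\ge 2\Delta$, so $\tfrac{3\Delta D-D^2}{2\Delta^2}\ge D/\Delta$. Since every term on the right of \eqref{eq:adj2} is nonnegative, and since any nontrivial $\orb{J}$-holomorphic orbifold curve has positive $\orb{\omega}$-area (so $D\ge 1$), I obtain the main estimate
\[\sum_{z\in\orb{S}}\frac{K_z}{2d_z}\;+\!\!\sum_{\substack{\orb{S}\ni z\neq z'\in\orb{S}\\ f(z)=f(z')}}\!\! k_{[z,z']}\;\le\;1-\frac{D}{\Delta}\;<\;1.\]

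To rule out $k_z\neq 0$ at $z\notin Z$, I split into two subcases according to whether $f(z)\in V$ or $f(z)=\orb{x}_i$ for some $i$. If $f(z)\in V$, then $|H_z|=|G_z|=1$ and by Remark \ref{rmk:simplify} the contribution to the left-hand sum is just $k_z$, which is the classical local self-intersection (delta invariant) of a singular holomorphic curve germ in a smooth complex surface, hence a positive integer; this already violates the bound. If $f(z)=\orb{x}_i$ then $|G_z|=p_i^2\ge 4$ while $|H_z|=1$, so $f$ has $|G_z|$ distinct lifts $\tilde f_\alpha$ to the uniformising cover, all passing through the origin of $\CC^2$ (the unique $G_z$-fixed point, forced by the equivariance \eqref{eq:equivariance}). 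Positivity of intersections then gives $\tilde f_\alpha\cdot\tilde f_\beta\ge 1$ for $\alpha\neq\beta$, and \eqref{eq:kz} yields $k_z\ge (|G_z|-1)/2\ge 3/2$, once more contradicting the bound.

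The pair contributions are handled in parallel. Suppose $z\neq z'$, $f(z)=f(z')$, and at least one of $z,z'$ lies outside $Z$. There are $|G_z|/|H_z|$ lifts at $z$ and $|G_z|/|H_{z'}|$ lifts at $z'$, and again all these lifts meet at the origin of the local uniformiser at $f(z)$; positivity of intersections and \eqref{eq:kzz} give
\[k_{[z,z']}\;\ge\;\frac{|G_z|}{|H_z|\,|H_{z'}|}\;\ge\;1,\]
where the last inequality follows because $|H_z|$ and $|H_{z'}|$ both divide $|G_z|$ and at least one of them equals $1$. This contradicts the strict main estimate, so we must have $z,z'\in Z$, and the lemma follows. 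The only nontrivial geometric input in the argument is the claim that every lift of $f$ at a point mapping to $\orb{X}_{\mathrm{sing}}$ must pass through the origin of its local uniformiser; without this, positivity of intersections would not give us anything useful, and I expect this to be the main (though modest) obstacle to a fully rigorous write-up.
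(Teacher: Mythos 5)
Your argument is correct and runs along the same line as the paper: every term on the right of \eqref{eq:adj2} is nonnegative, so a positive-integer-sized contribution from a point outside $Z$ already overshoots the left-hand side. You carry this out with more care than the paper does. In particular, the paper's proof glosses over the case where $z\notin Z$ but $f(z)=\orb{x}_i$ is an orbifold point by asserting that $k_z$ is ``a positive integer'' --- which is not literally true there, since $k_z=N_0+\tfrac12\sum_{i\ge1}N_i$ can be a half-integer --- whereas your lower bound $k_z\ge(|G_z|-1)/2\ge 3/2$ is exactly the estimate actually needed, and your parallel bound $k_{[z,z']}\ge |G_z|/(|H_z||H_{z'}|)\ge1$ handles the pair contributions uniformly rather than case-by-case. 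One small slip in the phrasing: the fact that all lifts $\tilde f_\alpha$ pass through $0$ is not ``forced by the equivariance \eqref{eq:equivariance}'' (which is vacuous when $|H_z|=1$), but rather by the elementary fact that $0$ is the unique preimage of the orbifold point under the uniformising cover $\CC^2\to\CC^2/G_z$; the conclusion you draw from it is right.
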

\begin{proof}
  If $f(z)=f(z')$ and $f(z)$ is not an orbifold point then the contribution $k_{[z,z']}$ is a positive integer by Equation \eqref{eq:kzz}. Similarly, if $k_z\neq 0$ and $z\not\in Z$, then $k_z$ is a positive integer by Equation \eqref{eq:kz}. All other terms on the right-hand side of Equation \eqref{eq:adj2} are nonnegative and some of them are positive. The sum of this positive integer and these positive terms is supposed to be 1 (the left-hand side), which is impossible.
\end{proof}

\begin{lma}\label{lma:contribz}
  Suppose that $D\leq\Delta$. If $K_{z}=0$ and $f(z)=\orb{x}_k$ then $d_{z}=p_k^2$. As a consequence, the total contribution
  \[\frac{1}{2}\left(1-\frac{1}{d_{z}}\right)+k_z\]
  to the right-hand side of Equation \eqref{eq:adj2} from each point $z\in Z$ is either $\frac{1}{2}\left(1-\frac{1}{p_k^2}\right)$ (if $K_{z}=0$) or else it is greater than or equal to $1/2$.
\end{lma}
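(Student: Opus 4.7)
The plan is to unpack $K_z$ from Remark \ref{rmk:simplify} and exploit positivity of local intersection numbers for complex-analytic curve germs. From that remark,
\[K_z=2N_0+\sum_{i=1}^{|G_z/H_z|-1}N_i,\]
where $N_0=\tilde{f}_\alpha\cdot\tilde{f}_\alpha$ is the local self-intersection of a single lift at the origin, and $N_i=\tilde{f}_\alpha\cdot\tilde{f}_{\zeta^i\alpha}$ for $i\geq 1$ is the local intersection number at $0\in\CC^2$ of two genuinely distinct lifts. They are distinct as germs because if $\zeta^i\in G_z\setminus H_z$ fixed $\tilde{f}_\alpha$ up to reparametrisation, then $\zeta^i$ would lie in $\rho_z(H_z)$, contradicting $\zeta^i\notin\rho_z(H_z)$.

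To prove the first claim I would invoke positivity of intersection for distinct holomorphic germs through a common point of $\CC^2$. Concretely, the bound $\nu\geq QR_1\geq 1$ from Lemma \ref{lma:jet}(2) gives $N_i\geq 1$ for each $i=1,\ldots,|G_z/H_z|-1$; hence $|G_z/H_z|\geq 2$ forces $K_z\geq N_1\geq 1$. The contrapositive is precisely the first assertion: $K_z=0$ implies $|G_z/H_z|=1$, so $H_z=G_z$ and $d_z=p_k^2$.

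For the consequence, a direct calculation from $k_z=K_z/(2d_z)$ gives
\[\tfrac{1}{2}\!\left(1-\tfrac{1}{d_z}\right)+k_z=\tfrac{d_z-1+K_z}{2d_z}.\]
If $K_z=0$, the first part gives $d_z=p_k^2$, and this expression simplifies to $\tfrac{1}{2}(1-1/p_k^2)$. Otherwise $K_z$ is a positive integer (as a sum of nonnegative intersection multiplicities, at least one of which must be positive), so $K_z\geq 1$ and the contribution is at least $d_z/(2d_z)=\tfrac{1}{2}$.

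I do not anticipate any genuine obstacle; the only care required is to confirm that the lifts $\tilde{f}_{\zeta^i\alpha}$ for $i\neq 0$ really are distinct germs from $\tilde{f}_\alpha$, which is built into the setup via the definition of $H_z$ as the stabiliser together with somewhere-injectivity of $f$.
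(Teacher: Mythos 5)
Your proof is correct and follows essentially the same route as the paper: show that $d_z \neq p_k^2$ forces at least two distinct lifts through the origin, hence $N_i \geq 1$ for some $i \neq 0$ and $K_z > 0$, then verify the contribution bound by direct computation using $k_z = K_z/(2d_z)$. The only minor cosmetic difference is that you invoke Lemma \ref{lma:jet}(2) for $N_i \geq 1$ where the paper just cites positivity of intersections of distinct holomorphic germs through a common point, but the content is the same.
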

\begin{proof}
  If $d_{z}\neq p_k^2$ then there are $p_k^2/d_{z}$ lifts $\tilde{f}_\alpha$ of $f$ near $z$, which intersect pairwise at $0$. Hence $N_j\geq 1$ for $j=1,\ldots,p_k^2/d_{z}-1$, giving $K_z\geq\frac{p_k^2}{d_{z}}-1>0$. Therefore if $K_{z}=0$ then we have $d_{z}=p_k^2$.

  Each point $z\in Z$ contributes a total of $\frac{1}{2}\left(1+\frac{K_{z}-1}{d_{z}}\right)$ to the right-hand side of \eqref{eq:adj2}. This contribution is either $\frac{1}{2}\left(1-\frac{1}{p^2_k}\right)$ (if $K_{z}=0$) or else greater than or equal to $1/2$.  This implies the claim.
\end{proof}

\begin{lma}\label{lma:nopun}
  Suppose that $D\leq\Delta$. We have $Z=f^{-1}\left(\orb{X}_{\mathrm{sing}}\right)$.
\end{lma}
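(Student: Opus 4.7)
The plan is to prove both inclusions separately, with the hard direction being $f^{-1}(\orb{X}_{\mathrm{sing}}) \subseteq Z$.

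First I would dispose of the inclusion $Z \subseteq f^{-1}(\orb{X}_{\mathrm{sing}})$, which is formal: if $z \in Z$ then $H_z$ is nontrivial by definition of $Z$, and since the orbifold holomorphic curve is defined via an injective homomorphism $\rho_z \colon H_z \to G_z$, the target isotropy group $G_z$ must also be nontrivial, forcing $f(z)$ to lie in $\orb{X}_{\mathrm{sing}}$.

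For the reverse inclusion, I would argue by contradiction: suppose $z \notin Z$ (so $d_z = |H_z| = 1$) but $f(z) = \orb{x}_k$ for some $k$. Because $p_k \geq 2$, the isotropy group $G_z$ has order $p_k^2 \geq 4$, and the number of local lifts $\tilde{f}_\alpha$ of $f$ at $z$ in the uniformising cover is $|G_z|/|H_z| = p_k^2$. These lifts are distinct but all pass through the origin, so each pairwise local intersection number $N_j$ (for $j = 1,\ldots,p_k^2 - 1$) is at least $1$ — this is the same observation used inside the proof of Lemma \ref{lma:contribz}. Consequently
\[
K_z \geq p_k^2 - 1 \geq 3, \qquad k_z = \frac{K_z}{2 d_z} = \frac{K_z}{2} \geq \frac{3}{2}.
\]

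The contradiction now comes from Equation \eqref{eq:adj2}. Since $D \leq \Delta$, the term $\frac{3\Delta D - D^2}{2\Delta^2}$ is nonnegative (this is just $D(3\Delta - D) \geq 0$), and every other contribution on the right-hand side — including each $k_{z'}$, each $\frac{1}{2}(1 - 1/d_{z'})$ and each $k_{[z',z'']}$ — is nonnegative. Therefore the single term $k_z \geq 3/2$ already exceeds the value $1$ of the left-hand side, which is impossible. Hence $z$ must lie in $Z$, proving the reverse inclusion. The main obstacle is really just the bookkeeping of lifts in the uniformising cover, but this has already been carried out in Lemma \ref{lma:contribz}, so once that lemma is in place the argument is short.
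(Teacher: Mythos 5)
Your proposal is correct and follows essentially the same line of reasoning as the paper: the paper handles the forward inclusion by definition of an orbifold map, and the reverse inclusion by noting that $z\not\in Z$ forces $d_z=1$, hence $K_z\neq 0$ by Lemma \ref{lma:contribz}, hence a contradiction with Lemma \ref{lma:kzzz}. You have simply inlined that chain of citations into a direct estimate. In fact your quantitative version is arguably a touch cleaner: the proof of Lemma \ref{lma:kzzz} asserts that $k_z$ is a positive \emph{integer} when $z\not\in Z$, which is only literally true when $f(z)$ is a smooth point of $\orb{X}$ (if $f(z)\in\orb{X}_{\mathrm{sing}}$ and $d_z=1$, then $k_z=K_z/2$ is a half-integer). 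Your computation $K_z\geq p_k^2-1\geq 3$, hence $k_z\geq 3/2>1$, reaches the same contradiction without needing integrality and so covers the relevant case explicitly. Both routes are sound; yours makes the arithmetic transparent rather than deferring to the earlier lemma.
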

\begin{proof}
  By definition of an orbifold holomorphic map, an orbifold point of the curve must map to an orbifold point of $\orb{X}$, so $Z\subset f^{-1}\left(\orb{X}_{\mathrm{sing}}\right)$. For the reverse inclusion, note that if $f(z)=\orb{x}_k$ and $z\not\in Z$ then $d_z=1$. By Lemma \ref{lma:contribz}, this means $K_z\neq 0$ and by Lemma \ref{lma:kzzz} this cannot happen as $z\not\in Z$.
\end{proof}

\begin{lma}\label{lma:Z12}
  Suppose that $D\leq\Delta$. We have $1\leq |Z|\leq 2$.
\end{lma}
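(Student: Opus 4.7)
The plan is to read both bounds off the reduced adjunction formula \eqref{eq:adj2}, using Lemmas \ref{lma:kzzz}, \ref{lma:contribz}, and \ref{lma:nopun} to control the terms on the right-hand side. Once those lemmas have been applied, what remains is a purely numerical estimate.

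For the upper bound $|Z|\leq 2$: By Lemma \ref{lma:nopun}, each $z\in Z$ satisfies $f(z)=\orb{x}_k$ for some $k$, and since $\orb{x}_k$ is a genuine singularity we have $p_k\geq 2$. Lemma \ref{lma:contribz} then tells us that the combined contribution $\tfrac{1}{2}(1-1/d_z)+K_z/(2d_z)$ from such a $z$ to the right-hand side of \eqref{eq:adj2} is at least $\tfrac{1}{2}(1-1/p_k^2)\geq 3/8$. Meanwhile the cross-terms $k_{[z,z']}$ are nonnegative and the degree term
\[
\frac{D(3\Delta-D)}{2\Delta^2}
\]
is strictly positive for $0<D\leq\Delta$. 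Thus if $|Z|\geq 3$ the right-hand side would strictly exceed $3\cdot 3/8 = 9/8>1$, contradicting \eqref{eq:adj2}.

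For the lower bound $|Z|\geq 1$: Suppose for contradiction that $Z=\emptyset$. Then by Lemma \ref{lma:nopun}, $f^{-1}(\orb{X}_{\mathrm{sing}})=\emptyset$, so $\orb{C}$ is a closed holomorphic curve supported in $V\subset\orb{X}$. Its homology class therefore lies in the image of $H_2(V;\ZZ)\to H_2(V,\Sigma;\ZZ)$, which by Lemma \ref{lma:homology}(b,d) is generated (on the free part) by $\Delta\eE$. Combined with $0<D\leq\Delta$ this forces $D=\Delta$. Plugging back into \eqref{eq:adj2} saturates the identity as $1=1+0+0+0$, so the remaining scenario to exclude is that $\orb{C}$ is an honest line in $\cp{2}$ entirely disjoint from all the balls $B_i$; this can then be ruled out by an intersection-theoretic argument (any such line would give a degree-one class in $H_2(V;\ZZ)$, contradicting the computation of Lemma \ref{lma:homology}(b) which says the image is contained in $\Delta\ZZ\subset\ZZ=H_2(X;\ZZ)$, unless $\Delta=1$, which is the trivial case $N\leq 1$ with $p_1=1$ excluded from consideration).

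The main obstacle is the lower bound rather than the upper bound: the upper bound follows essentially immediately from the minimum contribution estimate $3/8$ coming from Lemma \ref{lma:contribz}, whereas the lower bound needs the extra homological input (Lemma \ref{lma:homology}) to promote the adjunction equation into a usable constraint on $D$, and a small argument to handle the saturated edge case $D=\Delta$.
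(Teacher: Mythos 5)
Your upper bound argument ($|Z|\leq 2$) matches the paper's proof: Lemma \ref{lma:nopun} places $Z$ over the singular points, Lemma \ref{lma:contribz} gives the per-point lower bound $\tfrac{1}{2}(1-1/p_k^2)\geq 3/8$, and three such points overshoot $1$. That part is fine.

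The lower bound contains a genuine error. You claim the image of $H_2(V;\ZZ)\to H_2(V,\Sigma;\ZZ)$ is generated (modulo torsion) by $\Delta\eE$. That conflates the two maps in Lemma \ref{lma:homology}: part (b) says $\jmath_*\colon H_2(V;\ZZ)\to H_2(X;\ZZ)$ sends the generator to $\pm\Delta H$, and part (d) says $\jmath_!\colon H_2(X;\QQ)\to H_2(V,\Sigma;\QQ)$ sends $H$ to $\Delta\eE$. The natural map $H_2(V;\ZZ)\to H_2(V,\Sigma;\ZZ)$ is the \emph{composite} $\jmath_!\circ\jmath_*$, so the generator maps to $\pm\Delta^2\eE$, not $\pm\Delta\eE$; picking up only one factor of $\Delta$ is wrong. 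This is precisely the step the paper uses: $Z=\emptyset$ forces $\partial C=0\in H_1(\Sigma;\ZZ)$, hence $[\orb{C}]$ lies in the image of $H_2(V;\ZZ)$, hence $D\equiv 0\bmod\Delta^2$, which is immediately incompatible with $0<D\leq\Delta$ (since $\Delta\geq 2$, as all $p_i\geq 2$). There is no ``saturated edge case'' $D=\Delta$ to handle.

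Your patch for $D=\Delta$ (the curve would be an embedded degree-$1$ sphere in $V$, but closed curves in $V$ have degree divisible by $\Delta$ in $H_2(X;\ZZ)$) is correct as far as it goes, and it does rescue the proof. But it is a roundabout re-derivation of exactly the divisibility fact you should have had directly: a closed curve in $V$ of degree $d$ in $X$ has $D=d\Delta$ with $\Delta\mid d$, so $D\geq\Delta^2$. Had you stated the image correctly as $\Delta^2\eE\ZZ$, the contradiction with $0<D\leq\Delta$ is one line, and no appeal to saturation of adjunction or to the curve being a line is needed.
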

\begin{proof}
  If we had $|Z|=0$ then, by Lemma \ref{lma:nopun}, $f^{-1}\left(\orb{X}_{\mathrm{sing}}\right)$ would be empty and so the punctured curve $C$ would have no punctures. This would mean $\partial C=0\in H_1(\Sigma;\ZZ)$ and hence $D\equiv 0\mod \Delta^2$. Since $0<D\leq \Delta$, this is impossible.
  
  If we had $|Z|\geq 3$ then, by Lemma \ref{lma:contribz}, these three points together would contribute at least
  \[\sum_{z\in Z}\frac{1}{2}\left(1-\frac{1}{p_k^2}\right)\geq \frac{3}{2}-\frac{3}{2\min p_k^2}\geq\frac{3}{2}\left(1-\frac{1}{4}\right)=\frac{9}{8}>1\]
  to the right-hand side of Equation \eqref{eq:adj2} (since $p_k\geq 2$), which contradicts positivity of all the other terms.
\end{proof}

\begin{lma}
  Suppose that $D\leq\Delta$. For all $z,z'\in Z$ with $z\neq z'$ we have $f(z)\neq f(z')$, so that $k_{[z,z']}=0$.
\end{lma}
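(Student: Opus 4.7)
The claim is vacuous if $|Z|\leq 1$, so by Lemma~\ref{lma:Z12} I may assume $|Z|=2$, say $Z=\{z,z'\}$. I will argue by contradiction: suppose $f(z)=f(z')=\orb{x}_k$ for some $k$, and show that the right-hand side of \eqref{eq:adj2} strictly exceeds $1$.

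First I would observe that the ``genus'' term is strictly positive: since $0<D\leq\Delta<3\Delta$,
\[
\frac{3\Delta D-D^2}{2\Delta^2}=\frac{D(3\Delta-D)}{2\Delta^2}>0.
\]
So it suffices to prove
\[
\sum_{w\in Z}\left(\tfrac12\!\left(1-\tfrac{1}{d_w}\right)+\tfrac{K_w}{2d_w}\right)+k_{[z,z']}\geq 1.
\]

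Next I would bound $k_{[z,z']}$ from below. Since $f(z)=f(z')=\orb{x}_k$, the isotropy group at the image is $G_z$ of order $p_k^2$, and there are $p_k^2/d_z$ lifts $\tilde f_\alpha$ at $z$ and $p_k^2/d_{z'}$ lifts $\tilde f'_\beta$ at $z'$, each a germ of a holomorphic curve through the origin of the local uniformising cover $\CC^2\to \CC^2/G_z$. Every local intersection number $\tilde f_\alpha\cdot\tilde f'_\beta$ is a positive integer, so by \eqref{eq:kzz}
\[
k_{[z,z']}=\frac{1}{p_k^2}\sum_{\alpha,\beta}\tilde f_\alpha\cdot\tilde f'_\beta\;\geq\;\frac{1}{p_k^2}\cdot\frac{p_k^2}{d_z}\cdot\frac{p_k^2}{d_{z'}}=\frac{p_k^2}{d_zd_{z'}}\;\geq\;\frac{1}{p_k^2},
\]
using $d_z,d_{z'}\leq p_k^2$.

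Then I would invoke Lemma~\ref{lma:contribz}: each of $z$ and $z'$ contributes at least $\tfrac12(1-1/p_k^2)$ to the right-hand side of \eqref{eq:adj2} (the worst case being $K_w=0$, which forces $d_w=p_k^2$). Summing the two orbifold contributions together with the lower bound for $k_{[z,z']}$ gives
\[
\left(1-\tfrac{1}{p_k^2}\right)+\tfrac{1}{p_k^2}=1.
\]
Combined with the strictly positive area term, this makes the right-hand side of \eqref{eq:adj2} exceed $1$, contradicting the left-hand side. Hence no two distinct orbifold points of $\orb{S}$ can map to the same orbifold point of $\orb{X}$, and no terms $k_{[z,z']}$ survive in \eqref{eq:adj2}.

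The main obstacle is the tightness of the estimate: the uniform lower bound $\tfrac12(1-1/p_k^2)$ per orbifold point is strictly less than $\tfrac12$, and the uniform lower bound $1/p_k^2$ for $k_{[z,z']}$ is correspondingly small, so the two contributions only saturate the value $1$ together. It is essential that the area term $\frac{D(3\Delta-D)}{2\Delta^2}$ is \emph{strictly} positive for $0<D\leq\Delta$ to push the sum above $1$ and deliver the contradiction.
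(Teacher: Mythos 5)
Your proof is correct and follows essentially the same path as the paper: reduce to $|Z|=2$ via Lemma~\ref{lma:Z12}, bound the two orbifold-point contributions from below via Lemma~\ref{lma:contribz}, bound $k_{[z,z']}$ below by $1/p_k^2$ using positivity of local intersection numbers, and derive a contradiction with the strictly positive area term in the adjunction formula. The only (inconsequential) cosmetic difference is that you track the full count of $\frac{p_k^2}{d_z}\cdot\frac{p_k^2}{d_{z'}}$ lift pairs rather than noting that a single pair already suffices.
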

\begin{proof}
  By Lemma \ref{lma:kzzz}, if $k_{[z,z']}\neq 0$ then $z,z'\in Z$. If $|Z|=1$ then there is no $k_{[z,z']}$ term. Suppose that $|Z|=2$. We have that $K_{z}\geq 0$ with equality if and only if $d_{z}=p_k^2$ where $f(z)=\orb{x}_k$ (and the same for $z'$, where $f(z')=\orb{x}_{k'}$). Therefore Equation \eqref{eq:adj2} and Lemma \ref{lma:contribz} imply that
  \[0\geq \frac{(3\Delta-D)D}{\Delta^2}-\left(\frac{1}{p_k^2}+\frac{1}{p_{k'}^2}\right)+2k_{[z,z']}\]
  If both $f(z)=f(z')=\orb{x}_k$ then $k_{[z,z']}\geq\tfrac{1}{p_k^2}$ and the inequality reduces to
  \[0\geq (3\Delta-D)D.\]
  Since $0<D\leq \Delta$, we have $0<(3\Delta-D)D$ which is a contradiction. Therefore $f(z)\neq f(z')$ and there is no $k_{[z,z']}$ term.
\end{proof}

The adjunction formula for $\orb{C}$ is now
\[D^2-3\Delta D+(2-|Z|)\Delta^2=\sum_{z\in Z}(K_z-1)\frac{\Delta^2}{d_z}.\]
\begin{lma}\label{lma:coprime}
  Suppose that $D\leq\Delta$ and that either:
  \begin{itemize}
  \item $Z=\{z\}$, or
  \item $Z=\{z,z'\}$ and $K_{z'}=0$.
  \end{itemize}
  Then $K_z-1<d_z$. In particular, if $z\mapsto \left(z^Q,\sum_{i=1}^\infty a_iz^{R_i}\right)$ is a local parametrisation of one of the lifts $\tilde{f}_\alpha$ of $f$ to the local uniformising cover $B \subset \CC^2$ of a neighbourhood of $f(z)$, then $\gcd(Q,R_1)=1$.
\end{lma}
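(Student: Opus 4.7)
The plan is to extract the bound $K_z - 1 < d_z$ directly from the simplified adjunction formula
$$D^2-3\Delta D+(2-|Z|)\Delta^2=\sum_{w\in Z}(K_w-1)\frac{\Delta^2}{d_w},$$
and then feed this upper bound into Corollary \ref{cor:gcd} to conclude $\gcd(Q,R_1)=1$. The only analytic input is the sign of the quadratic $D^2-3\Delta D$ on the range $0<D\leq\Delta$: at $D=\Delta$ it equals $-2\Delta^2$ and it vanishes as $D\to 0$, so on the relevant range it is strictly negative.

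First I would treat the case $Z=\{z\}$. Here the adjunction formula reads
$$(K_z-1)\frac{\Delta^2}{d_z}=D^2-3\Delta D+\Delta^2.$$
Since $D^2-3\Delta D<0$ for $0<D\leq\Delta$, the right-hand side is strictly less than $\Delta^2$, giving $(K_z-1)/d_z<1$, that is $K_z-1<d_z$. Next I would treat the case $Z=\{z,z'\}$ with $K_{z'}=0$. By Lemma \ref{lma:contribz}, $d_{z'}=p_{k'}^2$ where $f(z')=\orb{x}_{k'}$, so the adjunction formula becomes
$$(K_z-1)\frac{\Delta^2}{d_z}=D^2-3\Delta D+\Delta^2\Bigl(1-\tfrac{|Z|-1}{1}\Bigr)+\frac{\Delta^2}{p_{k'}^2},$$
which, after putting in $|Z|=2$, reduces to
$$(K_z-1)\frac{\Delta^2}{d_z}=D^2-3\Delta D+\frac{\Delta^2}{p_{k'}^2}.$$
Again using $D^2-3\Delta D<0$ and $p_{k'}\geq 2$, the right-hand side is strictly less than $\Delta^2/p_{k'}^2\leq\Delta^2$, so $K_z-1<d_z$ as before.

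Having the bound $K_z-1<d_z$, I would finish by quoting Corollary \ref{cor:gcd}. That corollary asserts $K_z-1\geq\frac{p_k^2}{d_z}QR_1-Q-R_1$, where strict inequality would force $K_z-1>d_z$. Since we have just shown $K_z-1<d_z$, strict inequality in the corollary is ruled out, so equality holds, which by the corollary's equality case gives $\gcd(Q,R_1)=1$. No step looks genuinely obstructive: the only thing to be careful about is the book-keeping in case two, making sure one remembers that $K_{z'}=0$ contributes a \emph{negative} term $-\Delta^2/d_{z'}$ on the right-hand side, which improves (rather than weakens) the inequality for $K_z-1$; this is what makes the argument robust under the presence of a second, maximally-ramified orbifold point.
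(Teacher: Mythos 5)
Your proof is correct and follows essentially the same route as the paper: apply the simplified adjunction formula, use $D^2-3\Delta D<0$ on $0<D\leq\Delta$ to bound $(K_z-1)\Delta^2/d_z$ from above (by $\Delta^2$ when $|Z|=1$, and by $\Delta^2/d_{z'}$ when $|Z|=2$ and $K_{z'}=0$), and then deduce $\gcd(Q,R_1)=1$ from the equality case of Corollary~\ref{cor:gcd}. The only cosmetic difference is that you substitute $d_{z'}=p_{k'}^2$ via Lemma~\ref{lma:contribz} before bounding, whereas the paper leaves the bound as $\Delta^2/d_{z'}$; the logic is identical.
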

\begin{proof}
  Since $D^2-3\Delta D<0$, we have
  \[\sum_{z\in Z}(K_z-1)\frac{\Delta^2}{d_z}<(2-|Z|)\Delta^2.\]
  In the case $Z=\{z\}$, this gives $K_z-1<d_z$. In the second case, this gives
  \[(K_z-1)\frac{\Delta^2}{d_z}<\frac{\Delta^2}{d_{z'}}\]
  so $(K_z-1)/d_z<1$. In either case, Corollary \ref{cor:gcd} implies that $\gcd(Q,R_1)=1$.
\end{proof}

\begin{lma}\label{lma:Kzneq1}
  Suppose that $z\in Z$ is a point with $|G_z|=p^2$. Then $K_z\neq 1$.
\end{lma}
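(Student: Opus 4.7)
My plan is to split the argument according to the order $d := |H_z|$ of the domain isotropy, noting that $d$ divides $p^2 = |G_z|$ and $d > 1$ because $z \in Z$.

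If $d = p^2$ then $|G_z/H_z| = 1$, so there is a single lift $\tilde f$ at $z$ and the expression for $K_z$ in Remark~\ref{rmk:simplify} collapses to $K_z = 2 N_0$.  Here $N_0 = \tilde f \cdot \tilde f$ is the local adjunction contribution ($\delta$-invariant) of the branch, a non-negative integer, so $K_z$ is an even non-negative integer and in particular cannot equal $1$.

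If $d < p^2$ then $p^2/d \geq 2$.  Writing a lift in the Micallef--White form $(w^Q, c_1 w^{R_1} + \cdots)$ of Remark~\ref{rmk:locmod}, the standing hypothesis $Q_0 < R_1$ there forces $R_1 \geq Q + 1 \geq 2$.  Corollary~\ref{cor:gcd} then yields
\[
K_z - 1 \ \geq\ \frac{p^2}{d}\, Q R_1 - Q - R_1 \ \geq\ 2 Q R_1 - Q - R_1,
\]
and over the region $\{Q \geq 1,\, R_1 \geq Q+1\}$ the right-hand side is minimised at the corner $(Q, R_1) = (1, 2)$, where it equals $1$.  Hence $K_z \geq 2$, and once more $K_z \neq 1$.

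The only real subtlety is justifying the use of the Micallef--White form with $Q < R_1$ strict.  This is automatic whenever the tangent plane of $\tilde f$ at the orbifold point is one of the two coordinate axes $\CC \times \{0\}$ or $\{0\} \times \CC$ of the uniformising cover.  Otherwise the two weights of the $H_z$-action on $\CC^2$ must coincide, so $H_z$ acts by scalars, and one can rotate the coordinates on $\CC^2$ to align the tangent plane of $\tilde f$ with $\CC \times \{0\}$ without disturbing the (scalar) form of the action; Remark~\ref{rmk:locmod} then applies in the new frame.  Since the argument uses only the coarse bound $R_1 \geq 2$, no refined bookkeeping of the equivariance constraints on the exponents is needed.
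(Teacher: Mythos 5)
Your Case 1 ($d = p^2$) is fine: with a single lift, Remark \ref{rmk:simplify} gives $K_z = 2N_0$, which is even, hence never $1$.

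The gap is in Case 2 ($d < p^2$), and it sits exactly at the configuration the paper's proof is designed to handle. To invoke Corollary \ref{cor:gcd} with the bound $N_i \geq QR_1$ for $i \neq 0$, you need \emph{all} $p^2/d$ lifts $\tilde f_\alpha$ to have the form $(z^Q, \sum a_{\alpha,j} z^{R_j})$ with the \emph{same} exponents and $R_1 > Q$; this requires all lifts to share the tangent line $\CC \times \{0\}$, i.e.\ a $G_z$-fixed coordinate axis. Your rotation (or shear) argument only preserves the $\rho_z(H_z)$-action, not the full $G_z$-action that permutes the lifts: after rotating so that $\tilde f_\alpha$ is tangent to $\CC \times \{0\}$, the other lift $\zeta \cdot \tilde f_\alpha$ (with $\zeta \in G_z \setminus \rho_z(H_z)$) is tangent to $\zeta \cdot (\CC \times \{0\})$, a different line, and is \emph{not} of Micallef--White form in the new frame. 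Lemma \ref{lma:jet}(2) then gives only $N_1 \geq 1$, not $N_1 \geq QR_1 \geq 2$. Concretely, the case you cannot rule out by your bound is precisely $N_0 = 0$, $N_1 = 1$, $p^2/d = 2$: two embedded branches with distinct tangent lines meeting transversely, giving $K_z = 1$.

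The paper closes exactly this case by a different argument: two embedded, transverse branches swapped by the deck group produce a size-$2$ orbit of $\rt_{p^2}$ acting on tangent lines at the origin. Generic orbits have size $p^2/g$ with $g = \gcd(p^2, pq-2)$, so one needs $p^2 = 2g$; combined with $g \in \{1,2,4\}$ (Remark \ref{rmk:g124}) and the fact that $g = 4$ when $p = 2$, this is impossible. You would need something like this Reeb-orbit/weight analysis to complete your Case 2.
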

\begin{proof}
  Assume that $K_z=1$. Since $K_z=2N_0+N_1+\cdots+N_{(p^2/d_z)-1}$ with $N_i>0$ for all $i\neq 0$, we see that $N_0=0$, $N_1=1$ and $p^2/d_z=2$. Geometrically, this means that the lift of $f$ to the uniformising cover has two branches $\tilde{f}_\alpha$, $\alpha\in A=\{1,2\}$; moreover these branches are embedded at $z$ and intersect transversely there. Let $T_\alpha$ be the tangent line to $f_\alpha$ at $z$; every element of the deck group for the orbifold cover either switches of fixes the branches. Let $\zeta$ be an element of the deck group which switches the branches. Then $\zeta T_1=T_2$ and $\zeta T_2=T_1$, so the action of $\rt_{p^2}$ on the space of tangent complex lines at the origin has an orbit of size two. The action of $\rt_{p^2}$ on complex lines is the same as its action on Reeb orbits of $\Sigma_{p,q}$, so the orbits fall into two categories: exceptional orbits of size $1$ and generic orbits of size $p^2/g$ where $g=\gcd(p^2,pq-2)$ (see Section \ref{sct:contgeom}). Thus $p^2/g=2$ and, since $g\in\{1,2,4\}$ by Remark \ref{rmk:g124}, we must have $g=2$ and $p^2=4$. However, when $p=2$ we know that $g=4$, so we have a contradiction.
\end{proof}

We now analyse the cases $|Z|=1$ and $|Z|=2$ separately.

\subsection{The case $|Z|=1$}

Suppose that $Z=\{z\}$. The adjunction formula for $\orb{C}$ tells us that
\begin{equation}\label{eq:adjz1}D^2-3\Delta D+\Delta^2=(K_z-1)\frac{\Delta^2}{d_z}.\end{equation}

\begin{lma}\label{lma:oddfib}
  Suppose that $D\leq\Delta$ and $Z=\{z\}$. We have $f(z)=\orb{x}_N$ where $p_N$ is maximal in $\{p_1,\ldots,p_N\}$. Moreover, either:
  \begin{itemize}
  \item $K_z=0$, $d_z=p_N^2$ and $p_N$ is an odd-indexed Fibonacci number, or
  \item $K_z\neq 0$ and the homology class $[\orb{C}]=D\eE$ satisfies
    \[D<\frac{3-\sqrt{5}}{2}\Delta.\]
  \end{itemize}
  In either case, $D<\frac{2\Delta}{3}$.
\end{lma}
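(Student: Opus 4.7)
The plan is to analyse the adjunction formula \eqref{eq:adjz1} in the two cases allowed by Lemma \ref{lma:Kzneq1} for $K_z$, namely $K_z = 0$ and $K_z \geq 2$, and to squeeze out the claims from the resulting Diophantine equation together with the divisibility information coming from the integer lattice structure of $H_2(V,\Sigma;\ZZ)$.

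First, in the case $K_z = 0$, Lemma \ref{lma:contribz} forces $d_z = p_k^2$ (where $f(z) = \orb{x}_k$), and so \eqref{eq:adjz1} rearranges to $p_k^2 D^2 - 3 p_k^2 \Delta D + (p_k^2 + 1)\Delta^2 = 0$. Since $\orb{C}$ has its unique puncture mapping to $\orb{x}_k$, the boundary class $\partial[\orb{C}]$ is concentrated on $\Sigma_k$ and vanishes in $H_1(\Sigma_j;\ZZ) = \ZZ/(p_j^2)$ for every $j \neq k$. Combining this with the description of $\jmath_! H$ in Lemma \ref{lma:homology}(d) gives the divisibility $\Delta/p_k \mid D$; setting $E := D p_k/\Delta \in \ZZ$, the quadratic collapses to
\[ E^2 + 1^2 + p_k^2 = 3 \cdot E \cdot 1 \cdot p_k, \]
i.e., $(E, 1, p_k)$ is a Markov triple. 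Since the Markov triples containing a $1$ are precisely the Fibonacci triples $(1, F_{2n-1}, F_{2n+1})$, I conclude $p_k = F_{2n+1}$ and $E = F_{2n-1}$, hence $D = F_{2n-1}\Delta/p_k \leq \Delta/2$ for $p_k \geq 2$.

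Next, if $K_z \geq 2$ then $(K_z-1)\Delta^2/d_z \geq \Delta^2/d_z > 0$, so $D^2 - 3\Delta D + \Delta^2 > 0$. The roots of $x^2 - 3x + 1$ are $(3\pm\sqrt{5})/2$, and since $D \leq \Delta < (3+\sqrt{5})\Delta/2$, we must have $D < (3-\sqrt{5})\Delta/2$. Combining both cases yields $D < 2\Delta/3$, since $(3-\sqrt{5})/2 < 1/2 < 2/3$.

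The step I expect to be the main obstacle is the claim $f(z) = \orb{x}_N$, i.e.\ that $p_k = \max_i p_i$. To handle it I would return to the connecting map $\partial : H_2(V,\Sigma;\ZZ) \to \bigoplus_j \ZZ/(p_j^2)$ and analyse it carefully on the integer lattice $\ZZ \oplus T$ from Lemma \ref{lma:homology}(c). Using the vanishing $\partial[\orb{C}]|_{\Sigma_j} = 0$ for every $j \neq k$, the pairwise coprimality of the $p_i$, and the integrality condition $m \mid D$, one should rule out $p_k < p_N$: in Case A the shape $D = F_{2n-1}\Delta/p_k$ together with $\gcd(F_{2n-1}, F_{2n+1}) = 1$ constrains how $p_k$ can sit inside $\{p_1,\ldots,p_N\}$, while in Case B the combined bounds $\Delta/p_k \leq D < (3-\sqrt{5})\Delta/2$ force $p_k > (3+\sqrt{5})/2$. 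A careful chase through these constraints should leave $p_k = p_N$ as the only possibility.
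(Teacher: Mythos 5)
Your handling of the two cases ($K_z=0$ giving a $(E,1,p_k)$ Markov triple, hence a Fibonacci constraint, versus $K_z\geq 2$ giving the quadratic inequality with root $\tfrac{3-\sqrt 5}{2}\Delta$, both yielding $D<\tfrac{2\Delta}{3}$) is essentially the same as the paper's and is correct, modulo the issue below. You correctly invoke Lemma~\ref{lma:Kzneq1} to rule out $K_z=1$.

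The genuine gap is the one you yourself flag: the step $f(z)=\orb{x}_N$. You propose the divisibility $\tfrac{\Delta}{p_k}\mid D$, i.e.\ $p_j\mid D$ for $j\neq k$, but this is too weak and your proposed ``constraint chase'' does not close it. The paper's argument uses the stronger divisibility $p_j^2\mid D$ for every $j\neq k$: since $\orb C$ avoids $\orb x_j$, the punctured curve $C$ has no boundary in $\Sigma_j$, and the kernel of $\partial\colon H_2(V,\Sigma;\ZZ)\to H_1(\Sigma_j;\ZZ)=\ZZ/(p_j^2)$ picks up a full factor of $p_j^2$ rather than $p_j$ (this is the same mechanism by which $\partial C=0$ in \emph{all} the $\Sigma_j$ forces $D\equiv 0\bmod\Delta^2$, as used in Lemma~\ref{lma:Z12}). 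From $\prod_{j\neq k}p_j^2\mid D$ one gets $D\geq\Delta^2/p_k^2$, and if $p_k$ were not maximal then $\Delta\geq p_kp_N>p_k^2$ would give $\Delta^2/p_k^2>\Delta$, contradicting $D\leq\Delta$ immediately, with no further case analysis. With only $\tfrac{\Delta}{p_k}\mid D$, the bound is $D\geq\Delta/p_k$, which together with $D<\tfrac{2\Delta}{3}$ gives just $p_k>\tfrac 32$, i.e.\ nothing; likewise the two constraints you list in Cases A and B (the shape $D=F_{2n-1}\Delta/p_k$ with $\gcd(F_{2n-1},F_{2n+1})=1$, and the lower bound $p_k>(3+\sqrt5)/2$) are satisfied for nonmaximal $p_k$ as well (e.g.\ they do not rule out $k=1$ when $N=2$ and $p_1\geq 3$). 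So the argument as proposed does not establish maximality, and the key missing ingredient is precisely the quadratic divisibility $p_j^2\mid D$.

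A secondary remark: in Case A you need to know $E=Dp_k/\Delta$ is a positive integer before invoking the classification of Markov triples containing $1$; your weaker divisibility does at least give you that. But the paper avoids the substitution entirely by solving the quadratic for $D$ and observing that integrality of $D$ forces $\sqrt{5p_N^2-4}\in\ZZ$, which is equivalent to $p_N$ being an odd-indexed Fibonacci number; the two are equivalent routes.
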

\begin{proof}
  If $f(z)=\orb{x}_k$ then the curve $\orb{C}$ does not intersect $\Sigma_i$ for $i\neq k$. Hence its homology class $[\orb{C}]=D\eE$ reduces to zero modulo $p_i^2$ for $i\neq k$. Therefore $D\geq \Delta^2/p_k^2$ which is greater than $\Delta$ if $p_k$ is not maximal. This would contradict the assumption $D\leq\Delta$. Therefore $f(z)=\orb{x}_N$ with $p_N$ maximal in $\{p_1,\ldots,p_N\}$.
  
  If $K_z=0$ then $d_z=p_N^2$ by Lemma \ref{lma:contribz}. The adjunction formula \eqref{eq:adjz1} tells us
  \[D^2+\Delta^2+\frac{\Delta^2}{p_N^2}=3\Delta D.\]
  Therefore $D=\frac{3p_N\pm\sqrt{5p_N^2-4}}{2}\frac{\Delta}{p_N}$. If this is an integer then $p_N$ is an odd-indexed Fibonacci number, $F_{2m+1}$. In this case, $\sqrt{5p_N^2-4}$ is an odd-indexed Lucas number\footnote{The Lucas numbers are defined by the Fibonacci recursion $L_n=L_{n-1}+L_{n-2}$ but with $L_0=2$, $L_1=1$.} and $D=p_1\cdots p_{N-1}F_{2m-1}$. Note: $D=\frac{3p_N-\sqrt{5p_N^2-4}}{2p_N}\Delta<\frac{2\Delta}{3}$.
  
  If $K_z\neq 0$ then $D^2-3\Delta D+\Delta^2\geq 0$. The quadratic function on the left hand side of this inequality is decreasing on the interval $[0,\Delta]$ where $D$ lives, so $D$ must be less than the root $\frac{3-\sqrt{5}}{2}\Delta$.
\end{proof}

\begin{lma}\label{lma:vdimbound}
  Suppose that $D\leq\Delta$ and $Z=\{z\}$. Then the virtual complex dimension of the moduli space containing $\orb{C}$ is less than or equal to 1. If it is equal to 1 then $D>\Delta/3$.
\end{lma}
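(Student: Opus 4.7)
The plan is to substitute directly into Chen's virtual dimension formula \eqref{eq:vdim} and combine it with the sharp upper bound on $D$ given by Lemma \ref{lma:oddfib}. Since $|Z|=1$, the ``topological'' term $2-(3-|Z|)$ vanishes, and the formula reduces to
\[
d = c_1(\orb{X})\cdot[\orb{C}] - \frac{m_{1,z}+m_{2,z}}{d_z}.
\]
Using that $c_1(\cp{2}) = 3H$ together with the identification $c_1(\orb{X})\cdot[\orb{C}] = 3D/\Delta$ from Section~\ref{sct:homology}, this becomes
\[
d = \frac{3D}{\Delta} - \frac{m_{1,z}+m_{2,z}}{d_z}.
\]

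For the first assertion, I would note that $0 < m_{1,z}, m_{2,z} < d_z$ forces $(m_{1,z}+m_{2,z})/d_z > 0$, so $d < 3D/\Delta$. Lemma \ref{lma:oddfib} gives $D < 2\Delta/3$ in both the $K_z = 0$ and $K_z \neq 0$ subcases, so $3D/\Delta < 2$ and therefore $d < 2$. Since $d$ is the complex dimension of a moduli space of orbifold holomorphic maps, it is an integer, so $d \leq 1$.

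For the second assertion, if $d = 1$, then rearranging gives
\[
\frac{3D}{\Delta} = 1 + \frac{m_{1,z}+m_{2,z}}{d_z} > 1,
\]
which immediately yields $D > \Delta/3$.

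There is no real obstacle here — the main content is already contained in Lemma \ref{lma:oddfib}. The only subtlety is the integrality of the virtual dimension for orbifold curves, which one must invoke to pass from the strict inequality $d < 2$ to $d \leq 1$; this is justified as a genuine dimension of a moduli space in Chen's framework \cite{ChenPseudo}.
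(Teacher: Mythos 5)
Your proof is correct and follows essentially the same route as the paper: substitute $|Z|=1$ into the virtual dimension formula \eqref{eq:vdim}, observe that the remaining correction term $(m_{1,z}+m_{2,z})/d_z$ is strictly positive, invoke $D<2\Delta/3$ from Lemma \ref{lma:oddfib}, and conclude. Your explicit appeal to integrality of the Fredholm index to pass from $d<2$ to $d\leq 1$ is implicit in the paper's ``therefore at most one,'' so no discrepancy there.
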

\begin{proof}
  When $|Z|=1$, Equation \eqref{eq:vdim} for the virtual complex dimension of the moduli space reduces to:
  \[\frac{3D}{\Delta}-\frac{m_1+m_2}{|H_z|}.\]
  This is strictly less than $\frac{3D}{\Delta}$. By Lemma \ref{lma:oddfib}, we also know that $D<\frac{2\Delta}{3}$. Therefore the virtual dimension of the curves $\orb{C}$ is at most one. If the virtual dimension equals one then
  \[\frac{3D}{\Delta}-\frac{m_1+m_2}{|H_z|}=1,\]
  so $D>\Delta/3$ as required.
\end{proof}

\begin{lma}\label{lma:dsq}
  Suppose that $D\leq\Delta$, that $Z=\{z\}$ and that the virtual dimension of the moduli space containing $\orb{C}$ is equal to 1. Then
  \[\frac{D^2}{\Delta^2}=\frac{1}{p_N^2}\left(\frac{p_N^2}{d_z}\right)^2QR_1.\]
\end{lma}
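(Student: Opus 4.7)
The plan is to combine the orbifold adjunction formula with the virtual-dimension constraint. By Lemma \ref{lma:coprime}, the hypotheses $D \leq \Delta$ and $Z = \{z\}$ force $\gcd(Q, R_1) = 1$ and $K_z - 1 < d_z$; since strict inequality in Corollary \ref{cor:gcd} would give $K_z - 1 > d_z$, we must have the equality
\[K_z - 1 = \frac{p_N^2}{d_z}\,QR_1 - Q - R_1.\]
Substituting this into the adjunction identity \eqref{eq:adj2} in the case $|Z| = 1$ (where the $k_{[z,z']}$ contribution is absent) and rearranging produces
\[\frac{D^2}{\Delta^2} = \frac{3D}{\Delta} - 1 + \frac{p_N^2 QR_1}{d_z^2} - \frac{Q + R_1}{d_z}.\]

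Next I would invoke the virtual-dimension formula \eqref{eq:vdim}: with $|Z| = 1$ it reads $\mathrm{vdim} = 3D/\Delta - (m_{1,z} + m_{2,z})/d_z$, so the hypothesis $\mathrm{vdim} = 1$ rearranges to $3D/\Delta - 1 = (m_{1,z} + m_{2,z})/d_z$. Plugging this into the previous display yields
\[\frac{D^2}{\Delta^2} = \frac{p_N^2 QR_1}{d_z^2} + \frac{m_{1,z} + m_{2,z} - Q - R_1}{d_z}.\]

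The hard part will be showing that the correction term $(m_{1,z} + m_{2,z} - Q - R_1)/d_z$ vanishes, i.e.\ that $Q = m_{1,z}$ and $R_1 = m_{2,z}$ on the nose rather than merely modulo $d_z$ (which is all that Remark \ref{rmk:equiv} directly provides). My plan is to extract this from the normal form of Remark \ref{rmk:locmod}: in the normalised local lift $\tilde{f}_\alpha(w) = (w^Q, c_1 w^{R_1} + c_2 w^{R_2} + \cdots)$ with $c_1 \neq 0$, the leading exponent $Q$ is the minimal positive integer in the residue class of $m_{1,z}$ modulo $d_z$, so $Q = m_{1,z}$, and similarly $R_1 = m_{2,z}$. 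With these identifications the correction vanishes, and rewriting $p_N^2 QR_1/d_z^2$ as $\frac{1}{p_N^2}\bigl(\frac{p_N^2}{d_z}\bigr)^2 QR_1$ gives the stated identity.
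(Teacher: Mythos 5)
Your approach is genuinely different from the paper's, and the algebra up to the display
\[\frac{D^2}{\Delta^2} = \frac{p_N^2 QR_1}{d_z^2} + \frac{m_{1,z} + m_{2,z} - Q - R_1}{d_z}\]
is correct: the deduction $K_z - 1 = \tfrac{p_N^2}{d_z}QR_1 - Q - R_1$ from Lemma~\ref{lma:coprime} and Corollary~\ref{cor:gcd} is sound, and the substitution of the $\mathrm{vdim}=1$ relation is fine. But the final step, where you assert $Q = m_{1,z}$ and $R_1 = m_{2,z}$, does not follow from the Micallef--White normal form, and this is a real gap. Remark~\ref{rmk:equiv} gives only $Q \equiv m_{1,z}$ and $R_1 \equiv m_{2,z}$ modulo $d_z$. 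The normal form guarantees that the first coordinate of the normalised lift is exactly a monomial $w^Q$, but $Q$ is the order of vanishing of $z_1 \circ \tilde{f}_\alpha$ and nothing forces it to be the least positive residue of $m_{1,z}$ modulo $d_z$: a lift of the shape $(w^{m_{1,z}+d_z},\, c_1 w^{m_{2,z}+d_z} + \cdots)$ is equally equivariant. In fact, since $0 < m_{1,z}, m_{2,z} < d_z$ and $Q \equiv m_{1,z}$, $R_1 \equiv m_{2,z}$ modulo $d_z$, one always has $m_{1,z} \leq Q$ and $m_{2,z} \leq R_1$, so your correction term $(m_{1,z}+m_{2,z}-Q-R_1)/d_z$ is a \emph{non-positive} integer; your argument as written therefore only yields the inequality $D^2/\Delta^2 \leq \tfrac{p_N^2}{d_z^2}QR_1$, not equality.

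The paper proves the lemma by an intersection argument rather than adjunction plus virtual dimension. It takes two distinct curves $\orb{C}, \orb{C}'$ in the one-dimensional moduli space, notes that $\orb{C}\cdot\orb{C}' = D^2/\Delta^2 < 4/9$ by Lemma~\ref{lma:oddfib}, so all intersection contributions are fractional and hence concentrated at the single orbifold point $\orb{x}_N$. Bounding the local intersection of lift pairs from below via Lemma~\ref{lma:jet}(2) gives $D^2/\Delta^2 \geq \tfrac{1}{p_N^2}(\tfrac{p_N^2}{d_z})^2 QR_1$, and the excess case is excluded because it would force $D^2/\Delta^2 > 1$, contradicting $D \leq \Delta$. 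Notice that this is exactly the complementary inequality to the one your calculation provides: combining the paper's positivity-of-intersection bound with your adjunction-plus-vdim identity would give the equality (and would incidentally prove $Q = m_{1,z}$, $R_1 = m_{2,z}$ as a by-product), so your computation is a useful consistency check, but it cannot stand alone as a proof.
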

\begin{proof}
  Let $\orb{C}$ and $\orb{C}'$ be two curves in this 1-dimensional moduli space. Since they both have homology class $D\eE$ and $\eE^2=1/\Delta^2$, we know that $D^2/\Delta^2$ is the sum of local intersection contributions from the points in $\orb{C}\cdot\orb{C}'$. We know from Lemma \ref{lma:oddfib} that $D<\frac{2\Delta}{3}$, so none of these intersection contributions can be integers as they sum to $D^2/\Delta^2<4/9$. Since the only orbifold point contained in both $\orb{C}$ and $\orb{C}'$ is $\orb{x}_N$, there is only one local intersection contribution, which we now compute.

  Both curves have $p_N^2/d_z$ lifts in the uniformising cover as they belong to the same moduli space. The local models for the lifts are of the form
  \[\tilde{f}_\alpha=\left(z^Q,\sum a_{\alpha,i}z^{R_i}\right)\qquad \tilde{f}'_\beta=\left(z^Q,\sum a'_{\beta,i}z^{R_i}\right),\]
  with $R_i\equiv R_1\mod d_z$. Lemma \ref{lma:jet} tells us that the local intersection contribution from each pair of branches is at least $QR_1$. There are $\left(\frac{p_N^2}{d_z}\right)^2$ pairs of branches and the sum of local intersection numbers is weighted by an overall factor of $1/p_N^2$ in Equation \eqref{eq:kzz}, so we get
  \[\frac{D^2}{\Delta^2}\geq\frac{1}{p_N^2}\left(\frac{p_N^2}{d_z}\right)^2QR_1.\]
  By Lemma \ref{lma:jet}, strict inequality means that one of the local intersection numbers $\tilde{f}_{\alpha}\cdot\tilde{f}'_\beta$ is strictly greater than $d_z$. This would mean that $\tilde{f}_{\zeta^i\alpha}\cdot\tilde{f}'_{\zeta^i\beta}>d_z$ for each $i=0,\ldots,(p_N^2/d_z)-1$ (that is, for the $|G_z/H_z|$ pairs of lifts which are conjugate to the pair $(\alpha,\beta)$ under the residual $G_z/H_z$-action). This would give
  \[\frac{D^2}{\Delta^2}>\frac{1}{p_N^2}\frac{p_N^2}{d_z}d_z=1,\]
  which is impossible, since $D\leq\Delta$. Therefore we have equality and the Lemma follows.
\end{proof}

\subsection{The case $|Z|=2$}

  Suppose that $Z=\{z,z'\}$. The adjunction formula for $\orb{C}$ tells us that
\begin{equation}\label{eq:adjz2}0=\frac{3\Delta D-D^2}{\Delta^2}+\frac{K_z-1}{d_z}+\frac{K_{z'}-1}{d_{z'}}.\end{equation}

\begin{lma}\label{lma:cyl}
  Suppose that $D\leq \Delta$, that $Z=\{z,z'\}$, and that $f(z)=\orb{x}_k$, $f(z')=\orb{x}_{k'}$ with $p_{k'}<p_k$. Then:
  \begin{enumerate}
    \item[A.] the term $K_{z'}$ vanishes and $d_{z'}=p_{k'}^2$;
    \item[B.] the homology class $D\eE$ of $\orb{C}$ satisfies $D<\Delta/3p_{k'}$;
    \item[C.] $p_k$ is maximal among $\{p_1,\ldots,p_N\}$, that is $k=N$;
    \item[D.] there is no other orbifold holomorphic curve $\orb{C}'$ with homology class $[\orb{C}']=D'\eE$ satisfying $D'\leq\Delta$ passing through the two orbifold points $\orb{x}_k$ and $\orb{x}_{k'}$.
  \end{enumerate}
\end{lma}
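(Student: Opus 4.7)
The tool throughout is the orbifold adjunction formula \eqref{eq:adjz2}, which for $|Z|=2$ reads $D^2-3\Delta D=\Delta^2\bigl[(K_z-1)/d_z+(K_{z'}-1)/d_{z'}\bigr]$. With $0<D\le\Delta$ the left-hand side is strictly negative, while the admissible values of each $K_\bullet$ are restricted to $\{0\}\cup\{2,3,\ldots\}$ by Lemma \ref{lma:Kzneq1}, with $K_\bullet=0$ further forcing $d_\bullet=p^2$ by Lemma \ref{lma:contribz}. This rigidity drives parts A and B; parts C and D then combine it with homological data and positivity of intersection.

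For A, the minimum of $(K_\bullet-1)/d_\bullet$ over admissible values is $-1/p^2$ (attained only when $K_\bullet=0$), whereas if $K_\bullet\ne 0$ one has $(K_\bullet-1)/d_\bullet\ge 1/d_\bullet\ge 1/p^2$. Were $K_{z'}\ne 0$, the right-hand side of adjunction would therefore be at least $1/p_{k'}^2-1/p_k^2>0$, contradicting the negative left-hand side; hence $K_{z'}=0$, and then $d_{z'}=p_{k'}^2$ by Lemma \ref{lma:contribz}. For B, substituting $(K_{z'}-1)/d_{z'}=-1/p_{k'}^2$ rewrites adjunction as $D(3\Delta-D)=\Delta^2 c$ with $c=1/p_{k'}^2-(K_z-1)/d_z$. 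Since $g(D)=D(3\Delta-D)$ is increasing on $[0,\Delta]$, the desired inequality $D<\Delta/(3p_{k'})$ reduces to $c<1/p_{k'}-1/(9p_{k'}^2)$, which I would verify in the two subcases $K_z=0$ (so $c=1/p_{k'}^2+1/p_k^2$) and $K_z\ge 2$ (so $c<1/p_{k'}^2$) using the elementary arithmetic bound $10/(9p_{k'})<1$ valid for $p_{k'}\ge 2$.

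For C, the argument mimics Lemma \ref{lma:oddfib}. Lemma \ref{lma:nopun} identifies $Z$ with $f^{-1}(\orb{X}_{\mathrm{sing}})$, so $\orb{C}$ avoids every singular point except $\orb{x}_k,\orb{x}_{k'}$; hence the boundary of $D\eE$ in $H_1(\Sigma_i;\ZZ)=\ZZ/(p_i^2)$ vanishes for each $i\ne k,k'$, forcing $p_i^2\mid D$. Coprimality of the $p_i$ then gives $D\ge\prod_{i\ne k,k'}p_i^2=\Delta^2/(p_kp_{k'})^2$, and comparing with B yields $3\prod_{i\ne k,k'}p_i<p_k$. This is impossible whenever $p_N$ appears in the product, i.e.\ whenever $k\ne N$; hence $k=N$. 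For D, a second curve $\orb{C}'$ with $D'\le\Delta$ through both $\orb{x}_N$ and $\orb{x}_{k'}$ satisfies $|Z'|=2$ by Lemma \ref{lma:Z12}, so parts A--C apply to it as well, giving $D'<\Delta/(3p_{k'})$ and $\orb{C}'$ a suborbifold at $\orb{x}_{k'}$. Both curves then lift at $\orb{x}_{k'}$ to single smooth germs in $\CC^2$; distinctness of $\orb{C},\orb{C}'$ forces distinct germs by analytic continuation, so positivity of intersection contributes at least $1/p_{k'}^2$ to $\orb{C}\cdot\orb{C}'=DD'/\Delta^2$. The bound $DD'<\Delta^2/(9p_{k'}^2)$ then yields a contradiction.

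The most delicate step is the divisibility statement $p_i^2\mid D$ in C, which requires unpacking the connecting homomorphism $H_2(V,\Sigma;\ZZ)\to H_1(\Sigma;\ZZ)$ and matching the asymptotic Reeb orbits of the punctured curve with the components of $\partial(D\eE)$. The arithmetic in B is likewise close to sharp in the extreme case $p_{k'}=2$, where the $D^2$ correction in the adjunction equation is essential and cannot be replaced by the cruder linearisation $D(3\Delta-D)\approx 3D\Delta$.
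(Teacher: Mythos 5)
Your proof is correct and follows essentially the same approach as the paper's: part~A from the sign rigidity of $(K_\bullet-1)/d_\bullet$ via Lemmas~\ref{lma:contribz} and~\ref{lma:Kzneq1}, part~B by locating the small root of the adjunction quadratic, part~C by the same divisibility argument for $\partial C$ in $\bigoplus_{i\neq k,k'}H_1(\Sigma_i;\ZZ)$, and part~D by positivity of intersection against the area bound from~B. Two small remarks: in~D you only use the contribution $\geq 1/p_{k'}^2$ at $\orb{x}_{k'}$, which is a mild simplification of the paper's bound $\geq 1/p_k^2 + 1/p_{k'}^2$ and works just as well; in~B, the inequality $10/(9p_{k'})<1$ settles the $K_z\geq 2$ subcase cleanly, but for the $K_z=0$ subcase with $p_{k'}=2$ the crude bound $1/p_k^2<1/p_{k'}^2$ narrowly fails, and one must use $p_k\geq 3$ (as the paper implicitly does in checking $H(1/3p_{k'})<0$) — you flagged this sharpness yourself, so this is a bookkeeping point rather than a gap.
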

\begin{proof}
  \begin{enumerate}
  \item[A.] We must have either $K_z=0$ or $K_{z'}=0$: otherwise the last two terms on the right hand side of Equation \eqref{eq:adjz2} are nonnegative and cannot cancel the positive first term. Moreover, if $K_{z'}\neq 0$ then $K_z=0$ and hence, by Lemma \ref{lma:contribz}, $d_z=p_k^2$, so the adjunction formula \eqref{eq:adjz2} becomes
    \[0=\frac{3\Delta D-D^2}{\Delta^2}-\frac{1}{p_k^2}+\frac{K_{z'}-1}{d_{z'}}.\]
    Since $d_{z'}\leq p_{k'}^2<p_k^2$ and since $K_{z'}\neq 1$ by Lemma \ref{lma:Kzneq1}, the final term is strictly larger than $1/p_k^2$ and therefore the right hand side is still positive, which is a contradiction. Therefore $K_{z'}=0$, and $d_{z'}=p_{k'}^2$ by Lemma \ref{lma:contribz}.
  \item[B.] Using part A of the lemma, the adjunction formula \eqref{eq:adjz2} becomes
    \[D^2-3\Delta D+\frac{\Delta^2}{p_{k'}^2}=(K_z-1)\frac{\Delta^2}{d_z}\geq -\frac{\Delta^2}{p_k^2}.\]
    Since $0<D/\Delta\leq 1$, this means that $D/\Delta$ must lie in the interval $[0,D(p_{k'},p_{k})]$ where
    \[D(p_k,p_{k'}):=\frac{1}{2}\left(3-\sqrt{9-4\left(\frac{1}{p_{k'}^2}+\frac{1}{p_k^2}\right)}\right).\]
    On this interval, the function $H(x):=x^2-3x+\left(\frac{1}{p_{k'}^2}+\frac{1}{p_k^2}\right)$ is nonnegative. We compute that $H(1/3p_{k'})<0$, so $D(p_{k'},p_k)<\tfrac{1}{3p_{k'}}$ and hence $D<\tfrac{\Delta}{3p_{k'}}$ as required.
  \item[C.] Since $\orb{C}$ is disjoint from $\orb{x}_i$ for $i\neq k,k'$, the boundary $\partial C$ must vanish in $H_1(\bigcup_{i\neq k,k'}\Sigma_i;\ZZ)=\ZZ/(\Delta^2/p_{k'}^2p_k^2)$ and hence $\Delta^2/p_{k'}^2p_k^2$ divides $D$. Therefore
    \[\frac{\Delta^2}{p_{k'}^2p_k^2}\leq D<\frac{\Delta}{3p_{k'}}\]
    which means
    \[\frac{3\Delta}{p_{k'}p_k}\leq p_k.\]
    If $p_k$ is not maximal then $p_N$ divides the left hand side, which means that $p_N\leq p_k$, which contradicts maximality of $p_N$. Therefore $p_k$ is maximal.
  \item[D.] If there were two orbifold holomorphic curves $\orb{C}$ and $\orb{C}'$ in homology classes $D\eE$ and $D'\eE$ which both passed through the orbifold points $\orb{x}_k$ and $\orb{x}_{k'}$ then the intersection formula would give local contributions from these points of at least $\frac{1}{p_k^2}+\frac{1}{p_{k'}^2}$ to the product $\frac{DD'}{\Delta^2}$. Since part B of this Lemma implies that both $D$ and $D'$ are less than $\Delta/3p_{k'}$, this gives
    \[\frac{1}{p_k^2}+\frac{1}{p_{k'}^2}\leq \frac{DD'}{\Delta^2}\leq \frac{1}{9p_{k'}^2},\]
    which is a contradiction.
  \end{enumerate}
\end{proof}

\subsection{Analysis of SFT limit curves}\label{sct:sftanalysis}

Let $J\in\mathcal{J}_\Sigma$ and let $\orb{J}\in\mathcal{J}_{\orb{X}}$ be the associated orbifold almost complex structure on $\orb{X}$ given by Lemma \ref{lma:jhat}. By Lemma \ref{lma:adjusted}, any $J\in\mathcal{J}_\Sigma$ is an adjusted almost complex structure, which means we can perform neck-stretching along $\Sigma$ starting with $J$ and obtain a sequence of almost complex structures $J_t$ on $X$ for which the SFT compactness theorem applies. Pick points $w_1,w_2\in V$ and look at the unique $J_t$-holomorphic curve $u_t(w_1,w_2)$ in the class of a line $\cp{1}\subset\cp{2}$ passing through $w_1$ and $w_2$.

As $t\to\infty$, the sequence $u_t(w_1,w_2)$ converges (in the Gromov-Hofer sense, {\cite[Section 9]{BEHWZ}}) to a holomorphic building $u_\infty(w_1,w_2)$ in the split almost complex manifold made up of:
\begin{itemize}
\item $\overline{V}$, the completion of $V$, equipped with the almost complex structure $\overline{J|_V}$;
\item $\RR\times\Sigma$, the symplectisation of $\Sigma$, equipped with an complex structure making it biholomorphic to $\left(\CC^2\setminus\{(0,0)\}\right)/\Gamma_{p_i,q_i}$;
\item $\overline{B}$, the completion of $B$, equipped with the almost complex structure which is the completion of $J|_B$.
\end{itemize}

Let $\mathcal{C}=\{C_1,\ldots,C_n\}$ be an enumeration of the distinct simple punctured curves underlying those components of $u_\infty(w_1,w_2)$ which lie in the symplectic completion $\bar{V}$; let $\orb{\mathcal{C}}=\{\orb{C}_1,\ldots,\orb{C}_n\}$ denote the associated somewhere-injective orbifold $\orb{J}$-holomorphic curves in $\orb{X}$, constructed in Lemma \ref{lma:orbifoldcompactification}.

\begin{lma}\label{lma:area}
  For each $\orb{C}\in\orb{\mathcal{C}}$, if we write $[\orb{C}]=D\eE$ then we have $D\leq\Delta$. In particular, all of the conclusions from Section \ref{sct:orbicurve} apply to $\orb{C}$.
\end{lma}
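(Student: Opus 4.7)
The plan is to show $D\leq\Delta$ by combining conservation of homology across the neck-stretch with positivity of symplectic area.

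First I would use topological conservation. At finite $t$, the line $u_t\subset\cp{2}$ has class $H$, and the restriction $u_t\cap V$ is a 2-chain with boundary on $\Sigma$ whose class in $H_2(V,\Sigma;\QQ)$ is $\jmath_!H=\Delta\eE$ by Lemma \ref{lma:homology}(d). Gromov--Hofer compactness \cite[Section~9]{BEHWZ} shows that $u_t\cap V$ converges to the $\bar{V}$-level $\sum_j\mu_j v_j$ of the building, and relative homology is preserved under this limit. Under the identification $H_2(V,\Sigma;\QQ)\cong H_2(\orb{X};\QQ)$ from Section \ref{sct:homology}, the class of $v_j$ coincides with $[\orb{C}_j]=D_j\eE$, so the identity of relative classes rearranges to
\[\sum_j\mu_j D_j=\Delta.\]

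Next I would check positivity. Each $\orb{C}_j$ is a nonconstant somewhere-injective $\orb{J}$-holomorphic orbifold curve, so $\int_{\orb{C}_j}\orb{\omega}>0$. On the other hand, $[\orb{\omega}]$ restricts on $V$ to $\jmath^*[\omega_X]$, and naturality of the pairing applied to $\jmath_!H=\Delta\eE$ yields $[\orb{\omega}](\eE)=[\omega_X](H)/\Delta=A/\Delta>0$, where $A$ is the $\omega$-area of a line in $\cp{2}$. Hence $D_j\cdot A/\Delta=\int_{\orb{C}_j}\orb{\omega}>0$, forcing $D_j>0$. Combined with the first step, every term of the sum $\sum_j\mu_j D_j=\Delta$ is positive, so $\mu_j D_j\leq\Delta$, and in particular $D_j\leq\Delta$, as required.

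The main obstacle is making the homological identity in the first step rigorous: one must verify that under the Gromov--Hofer limit the relative class of $u_t\cap V$ carries over to the sum, with multiplicities, of relative classes of the $\bar{V}$-components. Formally, this is a cut-and-paste/cobordism consequence of the definition of $\jmath_!$ together with SFT compactness, but writing it out carefully is the one nontrivial point; once it is in hand, the rest is a purely formal manipulation of the intersection pairing.
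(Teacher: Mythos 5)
Your proposal is correct and follows essentially the same line as the paper: conservation across the neck-stretch plus positivity of area. The paper's own proof is slightly more economical — it cites Cieliebak--Mohnke for the fact that the $\omega$-area of the line equals the sum of areas of the $\bar{V}$-level components, normalises so that $\eE$ has area $1$, and concludes $D\leq\Delta$ directly; you instead run the conservation argument in $H_2(V,\Sigma;\QQ)$ to get $\sum_j\mu_j D_j=\Delta$ and then invoke area positivity to get $D_j>0$, which is an equivalent reformulation rather than a different method.
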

\begin{proof}
  The symplectic area of $\cp{1}\subset\cp{2}$ is equal to the sum of the symplectic areas of the various components of the SFT limit building living in $\bar{V}$ {\cite[Corollary 2.11]{CieliebakMohnke}}. The curves $C\in\mathcal{C}$ are obtained from these components by taking the underlying simple curves and ignoring repeats, so the sum
  \[\sum_{C\in\mathcal{C}}\int_C\overline{\omega|_V}\]
  of symplectic areas is less than or equal to the area of $\cp{1}$. The class $H=[\cp{1}]\in H_2(X;\ZZ)$ is identified with the class $\Delta\eE\in H_2(V,\Sigma;\ZZ)$. Therefore if we normalise $H$ to have symplectic area $\Delta$, $\eE$ must have symplectic area $1$, and hence $D\eE$ has area $D$. This implies that $D\leq \Delta$.
\end{proof}

\begin{lma}
  For generic $J\in\mathcal{J}_\Sigma$ and generic $w_1,w_2\in V$ there exists a component $\orb{C}\in\orb{\mathcal{C}}$ with $|Z|=1$ such that the virtual (complex) dimension of the moduli space of $\orb{J}$-holomorphic orbifold curves containing $\orb{C}$ is 1.
\end{lma}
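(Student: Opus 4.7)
The strategy is to apply SFT compactness to the family of $J_t$-holomorphic lines $u_t(w_1,w_2)$ and read off the required component from the limit building $u_\infty$.

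By SFT compactness \cite{BEHWZ}, $u_\infty$ has arithmetic genus zero and total homology class $H=\Delta\eE$, and contains a component in $\bar V$ through each of the generic points $w_1,w_2$. By Lemma \ref{lma:area} every simple $\orb{C}\in\orb{\mathcal{C}}$ satisfies $D\leq\Delta$, hence $|Z|\in\{1,2\}$ by Lemma \ref{lma:Z12}. For generic $J\in\mathcal{J}_\Sigma$ each such simple $\orb{C}$ is cut out transversely, so its moduli space has nonnegative virtual complex dimension.

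Next I would invoke the virtual-dimension bounds. Lemma \ref{lma:vdimbound} gives virtual dimension $\leq 1$ when $|Z|=1$; for $|Z|=2$, the dimension formula \eqref{eq:vdim} combined with $D<\Delta/(3p_{k'})$ from Lemma \ref{lma:cyl}(B) and $p_{k'}\geq 2$ yields virtual dimension strictly less than $3/2$, hence again $\leq 1$. Since each generic point $w_j$ imposes a complex-codimension-one condition on the relevant moduli, no single simple component can contain both points; there therefore exist distinct components $\orb{C}^{(1)},\orb{C}^{(2)}\in\orb{\mathcal{C}}$ with $w_j\in\orb{C}^{(j)}$, each having virtual complex dimension exactly $1$.

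The remaining step, which I expect to be the main obstacle, is to show that at least one of $\orb{C}^{(1)},\orb{C}^{(2)}$ has $|Z|=1$. When $N=1$ this is automatic, since $|Z|=2$ requires two distinct orbifold points. For $N\geq 2$ I would argue by contradiction: assume both $\orb{C}^{(j)}$ are $|Z|=2$ cylinders. By Lemma \ref{lma:cyl}(C),(D) each would have one end at $\orb{x}_N$ and the other at distinct $\orb{x}_{k'_1}\neq\orb{x}_{k'_2}$, and each such pair of endpoints supports at most one simple cylinder. My plan is to derive a contradiction from the degree identity $\sum_i\mathrm{mult}(\orb{C}_i)D_i=\Delta$ together with the requirement that the total boundary of the $\bar V$-components in $H_1(\Sigma;\ZZ)=\bigoplus_j\ZZ/(p_j^2)$ equal the primitive Mayer--Vietoris image $\partial H$ of order $\Delta$: since each simple $|Z|=2$ cylinder contributes to only two of the $\Sigma_j$-summands and Lemma \ref{lma:cyl}(D) caps the supply of such cylinders, matching $\partial H$ in every summand forces at least one $|Z|=1$ component to appear. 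That component must pass through one of $w_1,w_2$, so by the earlier virtual-dimension analysis its moduli space has complex dimension exactly $1$, yielding the required $\orb{C}$.
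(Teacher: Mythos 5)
Your framework matches the paper's at a high level (neck-stretch, pass to the orbifold compactification, use Lemma \ref{lma:area} and Lemma \ref{lma:Z12}, invoke transversality for generic $J$, and apply Lemma \ref{lma:vdimbound}), but the crucial step---showing that the component through a marked point has $|Z|=1$---is only sketched, and the route you propose for it both overcomplicates the argument and has a genuine gap.

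The gap is in your final paragraph. You establish only $\mathrm{vdim}\leq 1$ for $|Z|=2$ components, and you acknowledge that you cannot directly exclude the possibility that both $\orb{C}^{(1)}$ and $\orb{C}^{(2)}$ are $|Z|=2$ cylinders. Your proposed fix---a count matching the Mayer--Vietoris boundary class in $H_1(\Sigma;\ZZ)=\bigoplus_j\ZZ/(p_j^2)$ against the limited supply of $|Z|=2$ cylinders---is not carried out, and as stated is on shaky ground: the ``degree identity'' $\sum_i\mathrm{mult}(\orb{C}_i)D_i=\Delta$ is only an inequality once one accounts for the components of $u_\infty$ living in $\RR\times\Sigma$ and in the $\overline{B}_i$, and the image of $H$ under the Mayer--Vietoris connecting map does not have order $\Delta$ in each $\ZZ/(p_j^2)$ summand separately, so the summand-by-summand matching you describe needs to be formulated and justified much more carefully before it can be believed. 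More importantly, none of this is needed. Lemma \ref{lma:cyl} C and D together say that every $|Z|=2$ curve with $D\leq\Delta$ passes through $\orb{x}_N$ and one other $\orb{x}_{k'}$, and that for each such pair of orbifold points there is at most one such curve; hence there are at most $N-1$ somewhere-injective $|Z|=2$ curves in total, independently of any virtual-dimension estimate. Since, for generic $J$, the $|Z|=1$, $\mathrm{vdim}=0$ curves of bounded energy are also a finite set, a generic $w_1$ avoids the union of all of these. The component $\orb{C}\in\orb{\mathcal{C}}$ through $w_1$ (which exists because the incidence condition is closed under SFT convergence) therefore has $|Z|=1$ and $\mathrm{vdim}\geq 1$; combined with Lemma \ref{lma:vdimbound} this gives $\mathrm{vdim}=1$. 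This is the paper's argument, and it is both shorter and watertight. As a minor further point, you do not actually need $w_2$ at all for this lemma; only a single generic point is used.
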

\begin{proof}
  Any nonconstant holomorphic curve in $\orb{X}$ necessarily passes through $V$, so if we pick $J$ generically on $V$ then, for any fixed $E>0$, we can achieve transversality for all somewhere-injective orbifold $\orb{J}$-holomorphic curves with energy at most $E$. In particular, we can achieve transversality for all curves with $|Z|=1$, virtual complex dimension zero living in a homology class $m\eE$ with $m\leq\Delta$. In particular, there is a finite collection of such curves. By Lemma \ref{lma:cyl} D, there is at most a finite number of somewhere-injective curves with energy less than $\Delta$ and $|Z|=2$ and, by Lemma \ref{lma:Z12}, there are no curves with $|Z|\geq 3$.

  We pick $w_1,w_2$ in $V$ so that they do not lie on any of the finite set of curves with $|Z|\neq 1$ or virtual dimension zero. Since the condition of containing $w_i$ is closed, one of the curves $\orb{C}\in\orb{\mathcal{C}}$ contains $w_1$, and hence (by our choice of $w_1$) it must have $|Z|=1$ and virtual dimension at least 1. By Lemma \ref{lma:vdimbound}, its virtual dimension is precisely 1.
\end{proof}

\begin{lma}
  If $N\geq 2$ then, for any $j\neq N$, the set $\orb{\mathcal{C}}$ contains a unique curve $\orb{C}_{\mathrm{cyl},j}$ through $\orb{x}_j$. This curve has $|Z|=2$ and $\orb{x}_N\in\orb{C}_{\mathrm{cyl},j}$.
\end{lma}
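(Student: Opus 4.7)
The plan has three parts --- structure, uniqueness, and existence --- the first two of which are packaged from Section~\ref{sct:lowdeg}. Suppose $\orb{C}\in\orb{\mathcal{C}}$ passes through $\orb{x}_j$ with $j\neq N$. Lemma~\ref{lma:area} gives $D\leq\Delta$, Lemma~\ref{lma:Z12} gives $|Z|\in\{1,2\}$, and $|Z|=1$ is excluded by Lemma~\ref{lma:oddfib} (which would force the unique orbifold point to be $\orb{x}_N$). Hence $|Z|=2$; writing $Z=\{z,z'\}$ with $f(z)=\orb{x}_j$ and $f(z')=\orb{x}_{k'}$, and relabelling so that $p_{k'}<p_k$, Lemma~\ref{lma:cyl} C forces $p_k=p_N$, so $k=N$ and $\orb{x}_N\in\orb{C}$. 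Uniqueness is then an immediate application of Lemma~\ref{lma:cyl} D (both candidate curves have degree at most $\Delta$ by Lemma~\ref{lma:area}).

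Existence is the substantive step, and I would argue it via connectedness of the SFT limit. A Mayer--Vietoris computation parallel to Lemma~\ref{lma:homology}(b), applied to the single ball $B_j$ rather than the whole collection, shows that the inclusion-induced map $H_2(X\setminus B_j^\circ;\ZZ)\to H_2(X;\ZZ)=\ZZ$ has image $p_j\ZZ$. Since $p_j\geq 2$, the line class $H=1$ cannot be represented by a cycle avoiding $B_j^\circ$, so $u_t(w_1,w_2)$ meets $B_j^\circ$ for every $t$, and the Gromov--Hofer limit $u_\infty(w_1,w_2)$ contains non-trivial pieces in $\bar B_j$ or in the symplectisation $\RR\times\Sigma_j$. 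Because $\cp{1}$ is connected, the limit building (viewed as a nodal surface with matched Reeb orbits as nodes) is connected; chasing matched Reeb orbits upward from any such lower piece produces a $\bar V$-component with a negative puncture on $\Sigma_j$. Its underlying simple curve still carries a puncture on $\Sigma_j$, so its orbifold compactification (Lemma~\ref{lma:orbifoldcompactification}) is an element of $\orb{\mathcal{C}}$ through $\orb{x}_j$.

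The main obstacle is the existence step: one must carry out the Mayer--Vietoris computation, verify the connectedness argument tracing a $\bar B_j$- or symplectisation-$\Sigma_j$-component up to a $\bar V$-component via matched Reeb orbits, and confirm that on passing to the underlying simple curve the relevant puncture on $\Sigma_j$ survives.
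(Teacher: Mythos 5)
Your argument has the same skeleton as the paper's: a homological reason why each $J_t$-line must interact with $B_j$, the SFT limit, the chase through matched Reeb orbits to a $\bar{V}$-component puncturing at $\Sigma_j$, orbifold compactification via Lemma \ref{lma:orbifoldcompactification}, then the degree bound plus Lemmas \ref{lma:Z12} and \ref{lma:cyl} to pin down $|Z|=2$ and $\orb{x}_N\in\orb{C}$, with uniqueness from Lemma \ref{lma:cyl}~D. Your use of Lemma \ref{lma:oddfib} to exclude $|Z|=1$ is equivalent to the paper's direct divisibility computation, so the structural part is fine. There is one genuine soft spot in the existence step. You assert that because $u_t(w_1,w_2)$ meets the open ball $B_j^\circ$ for every $t$, the Gromov--Hofer limit must contain pieces in $\bar{B}_j$ or $\RR\times\Sigma_j$. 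Meeting an open set is not a closed condition, so this does not follow directly: the intersection points could a priori escape to $\Sigma_j$ as $t\to\infty$. The paper closes exactly this gap by observing that each $J_t$-line meets the \emph{compact} Lagrangian skeleton $L_{p_j,q_j}$ (because $c_1(\cp{2})$ trivialises on the complement of a line but restricts nontrivially to any neighbourhood of $L_{p_j,q_j}$, $c_1(B_{p_j,q_j})$ being primitive in $\ZZ/(p_j)$); intersection with a compact set is closed under Gromov--Hofer convergence, so the limit has a component in $\bar{B}_j$ with no further argument. Your Mayer--Vietoris observation is Poincar\'e dual to this Chern-class argument, and since $X\setminus L_{p_j,q_j}$ deformation retracts onto $X\setminus B_j^\circ$ it actually proves the sharper statement that the line meets $L_{p_j,q_j}$; it is only the choice to phrase the conclusion in terms of the open set $B_j^\circ$ that creates the difficulty. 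The remaining items you flag as needing verification (the chase through matched orbits, survival of the $\Sigma_j$-puncture on the underlying simple curve) are routine and match what the paper implicitly does.
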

\begin{proof}
  Fix $j$. Each $J_t$-holomorphic curve in the class of a line $\cp{1}\subset\cp{2}$ necessarily passes through the pinwheel $L_{p_j,q_j}\subset B_j$. This is because we can trivialise the complex determinant line bundle of $\cp{2}$ in the complement of $\cp{1}$, but any neighbourhood of $L_{p_j,q_j}$ has nontrivial first Chern class. Since the condition that a curve intersects $L_{p_j,q_j}$ is a closed condition, the SFT limit building must contain a component which intersects $L_{p_j,q_j}$. This component lives in the symplectic completion of the rational homology ball $\bar{B}_j$, so, correspondingly, there must be a component of the building in $\bar{V}$ with punctures asymptotic to Reeb orbits in $\Sigma_j$. After taking the orbifold compactification, this means that there is a component of the orbifold curve containing $\orb{x}_j$. A curve containing only $\orb{x}_j$ would necessarily live in a homology class $D_j\eE$ with $\Delta^2/p_j^2$ dividing $D_j$, but this is strictly bigger than $\Delta$ if $j\neq N$. Hence, by Lemma \ref{lma:area}, this cannot occur. Therefore this component has $|Z|=2$. By Lemma \ref{lma:cyl} C, this curve also contains $\orb{x}_N$. Uniqueness follows from Lemma \ref{lma:cyl} D.
\end{proof}

\begin{thm}\label{thm:oneball}
  Suppose that $N=1$. Let $\orb{C}\in\orb{\mathcal{C}}$ be a component with $Z=\{z\}$ living in a moduli space of virtual dimension 1. Then we have $d_z=p_1^2$. If $z\mapsto (z^Q,a_1z^{R_1}+\cdots)$ is a local model for the lift of the curve to a uniformising cover of a neighbourhood of the orbifold point $\orb{x}_1$ then there exist positive integers $b,c<p_1$ such that $Q=b^2$, $R_1=c^2$ and
  \[p_1^2+b^2+c^2=3p_1bc.\]
  Moreover, $q_1$ is determined up to $q_1\mapsto p_1-q_1$ by
  \[bq_1=\pm 3c\mod p_1.\]
\end{thm}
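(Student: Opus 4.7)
The plan is to combine three ingredients assembled in this section, then invoke a Diophantine classification due to Rosenberger. Setting $e := p_1^2/d_z$, Lemma \ref{lma:dsq} provides $D^2 = e^2 Q R_1$, Lemma \ref{lma:coprime} gives $\gcd(Q, R_1) = 1$, and Corollary \ref{cor:gcd} then applies with equality, so $K_z - 1 = e Q R_1 - Q - R_1$. Substituting this into the adjunction identity \eqref{eq:adjz1} and using $D^2 = e^2 QR_1$ to cancel the quadratic terms yields
\[
3 p_1 D = p_1^2 + e(Q + R_1).
\]

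Because $QR_1 = (D/e)^2$ is a rational square of a positive integer, it must be a perfect integer square, and coprimality of $Q$ and $R_1$ forces each factor to be a perfect square individually. Writing $Q = b^2$ and $R_1 = c^2$ with $\gcd(b, c) = 1$ gives $D = ebc$, and the identity above becomes the generalised Markov equation
\[
p_1^2 + e b^2 + e c^2 = 3 e p_1 b c.
\]
To conclude $e = 1$, I appeal to Rosenberger's classification \cite{Rosenberger79} of positive integer solutions to Markov-type Diophantine equations $\alpha x^2 + \beta y^2 + \gamma z^2 = \delta xyz$: the admissible coefficient tuples form a short explicit list on which $(1, e, e;\, 3e)$ appears only when $e = 1$. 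Hence $d_z = p_1^2$ and the equation reduces to the classical Markov relation $p_1^2 + b^2 + c^2 = 3 p_1 bc$; the bounds $b, c < p_1$ follow from $bc = D < 2 p_1 / 3$, proved in Lemma \ref{lma:oddfib}.

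For the congruence on $q_1$, the class $[\orb{C}] = D\eE$ restricts nontrivially to $H_1(\Sigma_{p_1, q_1}; \ZZ) = \ZZ/(p_1^2)$ because $0 < D \le p_1 < p_1^2$, so Remark \ref{rmk:vdim} applies and yields $R_1 \equiv Q(\pm p_1 q_1 - 1) \pmod{p_1^2}$; equivalently $b^2 + c^2 \equiv \pm b^2 p_1 q_1 \pmod{p_1^2}$. The Markov equation gives $p_1 \mid b^2 + c^2$ with $(b^2 + c^2)/p_1 = 3bc - p_1$, so dividing the congruence by $p_1$ produces $3bc \equiv \pm b^2 q_1 \pmod{p_1}$. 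Since entries of a Markov triple are pairwise coprime, $\gcd(b, p_1) = 1$, and cancelling $b$ gives $b q_1 \equiv \pm 3 c \pmod{p_1}$, as required.

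The substantive obstacle is the Rosenberger step. The adjunction--virtual-dimension calculus uniformly produces a one-parameter family of Markov-type equations indexed by the unknown divisor $e \mid p_1^2$, and it is only the rigidity of Rosenberger's classification that collapses this family to the classical Markov equation and forces the maximal isotropy $d_z = p_1^2$; without this Diophantine input, the argument would stop at the generalised equation and leave $d_z$ undetermined.
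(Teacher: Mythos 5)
Your overall strategy matches the paper's exactly: combine Lemma \ref{lma:dsq}, Lemma \ref{lma:coprime} and Corollary \ref{cor:gcd}, feed them into the adjunction identity \eqref{eq:adjz1}, recognise $Q$ and $R_1$ as coprime perfect squares $b^2,c^2$, obtain a Markov-type Diophantine equation, invoke Rosenberger to pin down the isotropy, and finally use Remark \ref{rmk:vdim} for the congruence on $q_1$. The algebra up to the equation $p_1^2 + e(b^2 + c^2) = 3ep_1bc$ is correct, and the derivation of the congruence $bq_1 \equiv \pm 3c \pmod{p_1}$ at the end is correct.

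However, there is a genuine gap in the Rosenberger step. You apply Rosenberger's classification to the coefficient tuple $(\alpha,\beta,\gamma,\delta) = (1, e, e, 3e)$ and assert that it appears on the admissible list only when $e = 1$. But Rosenberger's theorem (as stated in the paper) classifies only those equations whose coefficients $\alpha,\beta,\gamma$ are \emph{pairwise coprime}, and for $e > 1$ one has $\gcd(\beta,\gamma) = e \neq 1$, so the tuple $(1,e,e,3e)$ lies outside the domain of the theorem: its absence from the list proves nothing. (Indeed, tuples with non-coprime coefficients can certainly have solutions even when they are not on the list -- for instance $(2,2,2,6)$ is just the Markov equation times $2$.) The paper handles this by first rewriting the equation as $d_z + b^2 + c^2 = 3p_1bc$, factoring $d_z = s^2 t$ with $t$ squarefree, writing $p_1 = str$, and multiplying through by $r^2$ so that $(rb, rc, rs)$ solves $x^2 + y^2 + tz^2 = 3txyz$. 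The tuple $(1,1,t,3t)$ \emph{is} pairwise coprime and $t \mid 3t$, so Rosenberger now legitimately applies and forces $t=1$; then $(rb,rc,rs)$ is a genuine Markov triple, and since Markov triples have $\gcd = 1$ one gets $r = 1$ and hence $d_z = s^2 = p_1^2$. This factorisation-and-rescaling manoeuvre is not cosmetic -- it is exactly what puts the equation within the hypotheses of Rosenberger's classification, and without it your argument does not go through.
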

\begin{proof}  
  Let $z\mapsto \left(z^Q,\sum a_iz^{R_i}\right)$ be a local model for one of the lifts $\tilde{f}_\alpha$ to the uniformising cover. By Lemma \ref{lma:dsq}, we have $D^2=\left(\frac{p_1^2}{d_z}\right)^2QR_1$, and, by Lemma \ref{lma:coprime}, we have $\gcd(Q,R_1)=1$. This implies that $Q=b^2$ and $R_1=c^2$ for some integers $b,c$. Since $bc$ divides $D^2<p_1^2$, we have $b,c<p_1$. Recall the adjunction formula \eqref{eq:adjz1} in this case is
  \[D^2-3\Delta D+\Delta^2=(K_z-1)\frac{\Delta^2}{d_z},\]
  where now: $\Delta=p_1$, $D=\tfrac{p_1^2}{d_z}bc$, and (by Corollary \ref{cor:gcd})
  \begin{align*}
    K_z-1&=\frac{p_1^2}{d_z}QR_1-Q-R_1\\
    &=\frac{p_1^2}{d_z}b^2c^2-b^2-c^2.
  \end{align*}
  This becomes
  \[\left(\frac{p_1^2}{d_z}\right)^2b^2c^2-3\frac{p_1^3bc}{d_z}+p_1^2=\left(\frac{p_1^2}{d_z}b^2c^2-b^2-c^2\right)\frac{p_1^2}{d_z},\]
  or
  \[d_z+b^2+c^2=3p_1bc.\]
  Let us factorise $d_z$ as $d_z=s^2t$ where $t$ is squarefree. Since $d_z$ divides $p_1^2$, we see that $p_1=str$ for some $r$. Multiplying by $r^2$ we get
  \[r^2s^2t+(rb)^2+(rc)^2=3rst(rb)(rc)\]
  so that $rb,rc,rs$ is a positive integer solution to the Diophantine equation
  \[x^2+y^2+tz^2=3txyz.\]
  There is a complete classification, due to Rosenberger \cite{Rosenberger79}, of Diophantine equations of the form $\alpha x^2+\beta y^2+\gamma z^2=\delta xyz$, with $1\leq\alpha\leq\beta\leq\gamma$, $\gcd(\alpha,\beta)=\gcd(\alpha,\gamma)=\gcd(\beta,\gamma)$ and with $\delta$ divisible by $\alpha\beta\gamma$, which admit positive integer solutions. They are called {\em Markov-Rosenberger equations} and there are six possibilities:
  \[(\alpha,\beta,\gamma,\delta)=\begin{cases}
    (1,1,1,1)\\ (1,1,1,3)\\ (1,1,2,2)\\ (1,1,2,4)\\ (1,2,3,6)\\ (1,1,5,5).
  \end{cases}\]
  The equation $x^2+y^2+tz^2=3txyz$ therefore has positive integer solutions if and only if $t=1$. But then $(rb,rc,rs)$ is a solution of the Markov equation $x^2+y^2+z^2=3xyz$ so $r=\gcd(x,y,z)=1$. Therefore $d_z=s^2=p_1^2$ and
  \[p_1^2+b^2+c^2=3p_1bc.\]
  Finally, by Remark \ref{rmk:vdim}, we know that $R_1=Q(\pm p_1q_1-1)\mod p_1^2$. Since $R_1=c^2$ and $Q=b^2$ this tells us that
  \[c^2=b^2(\pm p_1q_1)\mod p_1^2.\]
  Since $b^2+c^2=(3bc-p_1)p_1$ this gives
  \[3bc=\pm b^2q_1\mod p_1\]
  or $3c=\pm bq_1\mod p_1$ (since $b$ and $p_1$ are coprime).
\end{proof}

\begin{thm}\label{thm:twoball}
  Suppose that $N=2$. Let $\orb{C}=\orb{C}_{\mathrm{cyl},1}\in\orb{\mathcal{C}}$ be the unique component with $Z=\{z,z'\}$, $f(z')=\orb{x}_1$, $f(z)=\orb{x}_2$, and let $[\orb{C}]=D\eE$. Then
  \[p_1^2+p_2^2+D^2=3p_1p_2D.\]
  Moreover $3D=\pm p_1q_2\mod p_2$ and $3D=\pm p_2q_1\mod p_1$.
\end{thm}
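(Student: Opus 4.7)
The plan is to show that $\orb{C}_{\mathrm{cyl},1}$ is a suborbifold at both $\orb{x}_1$ and $\orb{x}_2$; once this is established, the adjunction formula for $\orb{C}_{\mathrm{cyl},1}$ collapses directly to the Markov equation, and the congruences follow from the integrality of the virtual dimension.

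First, Lemma \ref{lma:cyl}(A) immediately yields $K_{z'}=0$ and $d_{z'}=p_1^2$, so $\orb{C}_{\mathrm{cyl},1}$ is a suborbifold at $\orb{x}_1$ with a local lift of the form $(z,c'z^{R'}+\cdots)$ having $Q'=1$ and, by Remark \ref{rmk:vdim}, $R'\equiv\pm p_1q_1-1\pmod{p_1^2}$. Substituting $K_{z'}=0$ and $d_{z'}=p_1^2$ into the adjunction formula \eqref{eq:adjz2} reduces it to
\[\frac{D^2-3p_1p_2D+p_2^2}{(p_1p_2)^2}=\frac{K_z-1}{d_z}.\]

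The main obstacle is to force $K_z=0$ and $d_z=p_2^2$. Assume for contradiction that $K_z\geq 2$ (recall $K_z\neq 1$ by Lemma \ref{lma:Kzneq1}). Then the right-hand side is positive, forcing the left-hand side positive; together with the bound $D<\Delta/(3p_1)$ from Lemma \ref{lma:cyl}(B), one gets $(K_z-1)/d_z<1/p_1^2$, so $(K_z-1)p_1^2<d_z$. Let $(z^Q,\sum_ia_iz^{R_i})$ be a local lift of $f$ at $\orb{x}_2$. By Corollary \ref{cor:gcd}, $K_z-1\geq (p_2^2/d_z)QR_1-Q-R_1$, and strict inequality would give $K_z-1>d_z$, contradicting the previous bound; hence $\gcd(Q,R_1)=1$ and $K_z-1=(p_2^2/d_z)QR_1-Q-R_1$. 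Substituting back and using an intersection of $\orb{C}_{\mathrm{cyl},1}$ with one of the $|Z|=1$ curves through $\orb{x}_2$ from Section \ref{sct:sftanalysis} (whose local exponents are controlled by Theorem \ref{thm:oneball}) to relate $D$ to $(Q,R_1,d_z)$, the adjunction identity rescales to a Markov--Rosenberger Diophantine equation. Rosenberger's classification \cite{Rosenberger79}, applied as in the endgame of Theorem \ref{thm:oneball}, forces $d_z=p_2^2$; combined with the equivariance constraint of Remark \ref{rmk:vdim} this forces $Q=1$, and therefore $K_z=0$. The hard part is precisely this step: because $\orb{C}_{\mathrm{cyl},1}$ is unique by Lemma \ref{lma:cyl}(D), the two-curve intersection trick of Lemma \ref{lma:dsq} is unavailable and must be replaced by an auxiliary intersection with a $|Z|=1$ curve.

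With $K_z=K_{z'}=0$, $d_z=p_2^2$, $d_{z'}=p_1^2$, the adjunction identity becomes $-1/p_2^2=(D^2-3p_1p_2D+p_2^2)/(p_1p_2)^2$, which rearranges to $p_1^2+p_2^2+D^2=3p_1p_2D$. For the congruences, Remark \ref{rmk:vdim} gives $(m_{1,i}+m_{2,i})/p_i^2\equiv\pm q_i/p_i\pmod 1$ at each suborbifold point, so integrality of the virtual complex dimension \eqref{eq:vdim},
\[d=\frac{3D}{p_1p_2}+1-\frac{m_{1,z'}+m_{2,z'}}{p_1^2}-\frac{m_{1,z}+m_{2,z}}{p_2^2}\in\ZZ,\]
yields $3D\equiv\pm q_1p_2\pm q_2p_1\pmod{p_1p_2}$; reducing modulo $p_1$ and $p_2$ separately gives $3D\equiv\pm p_2q_1\pmod{p_1}$ and $3D\equiv\pm p_1q_2\pmod{p_2}$.
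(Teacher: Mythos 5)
Your opening (reducing adjunction \eqref{eq:adjz2} to an equation in $(K_z,d_z)$ via Lemma \ref{lma:cyl}(A), and extracting $\gcd(Q,R_1)=1$ via Corollary \ref{cor:gcd}) and your closing (deriving the congruences from Remark \ref{rmk:vdim} and the integrality of \eqref{eq:vdim}) are both fine and match the paper's steps. But the middle -- forcing $d_z=p_2^2$ and $Q=1$, hence $K_z=0$ -- is a genuine gap. You correctly observe that the two-curve intersection trick of Lemma \ref{lma:dsq} is unavailable because $\orb{C}_{\mathrm{cyl},1}$ is rigid, but the replacement you propose, intersecting with a $|Z|=1$ curve through $\orb{x}_2$, does not obviously deliver what you need. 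That auxiliary curve has its own local data $(Q_0,R_{0,1},\ldots)$ with $d_{z_0}=p_2^2$, so the local intersection contribution at $\orb{x}_2$ is a resultant-type quantity mixing two distinct exponent profiles, not a clean multiple of $QR_1$. Even if it were, adjunction contains $D^2$ rather than $DD_0$, so you would still lack the closing relation. The claim that this ``rescales to a Markov--Rosenberger Diophantine equation'' is unsupported, and in fact Rosenberger's classification plays no role in the paper's proof of this theorem.

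The missing input is that the virtual dimension of the moduli space containing $\orb{C}_{\mathrm{cyl},1}$ is exactly zero, not merely an integer: by Lemma \ref{lma:cyl}(D) the curve is unique, and for generic $J$ it is regular, so \eqref{eq:vdim} becomes an equation
\[
\frac{3D}{p_1p_2}+1=\frac{m_{1,z}+m_{2,z}}{d_z}+\frac{m_{1,z'}+m_{2,z'}}{p_1^2}.
\]
Combining this with adjunction, introducing $\tilde Q=\tfrac{p_2^2}{d_z}Q$ and $\tilde R=\tfrac{p_2^2}{d_z}R_1$, and establishing the two bounds $\tilde Q+\tilde R<p_2^2$ and $\tilde Q<\tfrac{2p_2}{p_1}$ (which your plan does not provide) allows an algebraic elimination of $D$ that forces $\tilde Q=1$, hence $d_z=p_2^2$, $Q=1$, $K_z=0$. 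That is the paper's route, and it sidesteps the intersection argument entirely; without it, the Diophantine endgame you sketch cannot get started.
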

\begin{proof}
  By Lemma \ref{lma:cyl} A, we know that $K_{z'}=0$ and $d_{z'}=p_1^2$, so that the adjunction formula \eqref{eq:adjz2} becomes
  \begin{equation}\label{eq:adjtwoball}D^2-3p_1p_2D+p_2^2=(K_z-1)\frac{p_2^2}{d_z}p_1^2.\end{equation}
  The virtual dimension of the moduli space containing $\orb{C}$ is zero since $\orb{C}$ is the unique curve in its homology class (we can choose $J$ generically on $V$ so that $\orb{C}$ is regular). The virtual dimension formula, Equation \eqref{eq:vdim}, becomes
  \begin{equation}\label{eq:vdimtwoball}\frac{3D}{p_1p_2}+1=\frac{m_{1,z}+m_{2,z}}{d_z}+\frac{m_{1,z'}+m_{2,z'}}{p_1^2}.\end{equation}
  To find the numbers $m_{i,z}$, $m_{i,z'}$, we use Remark \ref{rmk:vdim}: if $\tilde{f}_\alpha(z)=\left(z^Q,\sum a_iz^{R_i}\right)$ is a local lift of $\orb{C}$ in a neighbourhood of $\orb{x}_2$ then $m_{1,z}=[Q]_{d_z}$ and $m_{2,z}=R_1=[Q(\pm p_2q_2-1)]_{d_z}$ where we write $[x]_y$ for the remainder of $x$ modulo $y$. Similarly we get $m_{1,z'}=1$ and $m_{2,z'}=[\pm p_1q_1-1]_{p_1^2}$ because the condition $K_{z'}=0$ means that, in local coordinates, the lift of $\orb{C}$ in a neighbourhood of $\orb{x}_1$ is $\left(z,\sum a'_iz^{R'_i}\right)$.

  Note that by Lemma \ref{lma:coprime}, we know that $\gcd(Q,R_1)=1$, so by Corollary \ref{cor:gcd} we have $K_z-1=\frac{p_2^2}{d_z}QR_1-Q-R_1$.
  
  We have
  \begin{align*}
    p_1p_2\frac{m_{1,z'}+m_{2,z'}}{p_1^2}&=p_1p_2\frac{1+[\pm p_1q_1-1]_{p_1^2}}{p_1^2}\\
                                         &=p_2\frac{[\pm p_1q_1]_{p_1^2}}{p_1}\\
                                         &=p_2[\pm q_1]_{p_1}.
  \end{align*}
  The sign ambiguity here is simply a question of which plane $\CC\times\{0\}$ or $\{0\}\times\CC$ our curve asymptotes; since $q_1$ is only determined up to a sign modulo $p_1$, we can relabel the $q_1$ we have picked to be $p_1-q_1$ if necessary in order to replace the term $[\pm q_1]_{p_1}$ by $q_1$ (this relabelling does not affect any other equations as $q_1$ appears only here). The virtual dimension formula \eqref{eq:vdimtwoball} now implies
  \[3D+p_1p_2=p_1(Q+R_1)\frac{p_2^2}{d_z}\frac{1}{p_2}+p_2q_1.\]

We now define $\tilde{Q}=\frac{p_2^2}{d_z}Q$ and $\tilde{R}=\frac{p_2^2}{d_z}R_1$, so that
  \begin{equation}\label{eq:vdim2}3D+p_1p_2=p_1(\tilde{Q}+\tilde{R})\frac{1}{p_2}+p_2q_1.\end{equation}
  In terms of $\tilde{Q}$ and $\tilde{R}$, we have $(K_z-1)p_2^2/d_z=\tilde{Q}\tilde{R}-\tilde{Q}-\tilde{R}$, and the adjunction formula \eqref{eq:adjtwoball} becomes
  \begin{equation}\label{eq:adjcyl}D^2-3p_1p_2D+p_2^2=(\tilde{Q}\tilde{R}-\tilde{Q}-\tilde{R})p_1^2.\end{equation}
  \begin{clm}
    We have $\tilde{Q}+\tilde{R}=[\pm\tilde{Q}p_2q_2]_{p_2^2}$.
  \end{clm}
  \begin{proof}
    We know that
    \begin{align*}
      \tilde{Q}+\tilde{R}&=\tilde{Q}+\tilde{Q}(\pm p_2q_2-1)\mod p_2^2\\
      &=\pm\tilde{Q}p_2q_2\mod p_2^2,
      \end{align*}
    so it suffices to show that $\tilde{Q}+\tilde{R}<p_2^2$. Equation \eqref{eq:vdim2} tells us that
    \[\frac{3p_2D}{p_1}+p_2^2\left(1-\frac{q_1}{p_1}\right)=\tilde{Q}+\tilde{R}.\]
    By Lemma \ref{lma:cyl} B, we have $D<\frac{p_2}{3}$, so
    \[\tilde{Q}+\tilde{R}<p_2^2\left(1-\frac{q_1}{p_1}+\frac{1}{p_1}\right).\]
    This implies that $\tilde{Q}+\tilde{R}$ is strictly less than $p_2^2$ as $q_1\geq 1$.
  \end{proof}
  \begin{clm}
    We have $\tilde{Q}<\frac{2p_2}{p_1}$.
  \end{clm}
  \begin{proof}
    We know that $D^2-3p_1p_2D<0$, so
    \begin{align*}
      p_2^2&>D^2-3p_1p_2D+p_2^2\\
      &>((\tilde{Q}-1)(\tilde{R}-1)-1)p_1^2\\
      &>(\tilde{Q}^2-\tilde{Q}-1)p_1^2
    \end{align*}
    where we have used Equation \eqref{eq:adjcyl} and the fact that $\tilde{R}\geq\tilde{Q}+1$ (see the definition of the local model in Remark \ref{rmk:locmod}). If we define $f(x)=x^2-x-1$ then this tells us $f(\tilde{Q})<\tfrac{p_2^2}{p_1^2}$. We will show that $f\left(\tfrac{2p_2}{p_1}\right)>\tfrac{p_2^2}{p_1^2}$; since the function $f(x)$ is monotonically increasing on the interval $[1/2,\infty)$, this will imply the claim. We have
      \begin{align*}
        f\left(\frac{2p_2}{p_1}\right)&=4\frac{p_2^2}{p_1^2}-2\frac{p_2}{p_1}-1\\
        &=\frac{p_2^2}{p_1^2}+\left(\frac{3p_2}{p_1}-2\right)\frac{p_2}{p_1}-1\\
        &>\frac{p_2^2}{p_1^2}
      \end{align*}
      where we have used $p_2/p_1>1$.
  \end{proof}
  Let us define $k:=[\pm\tilde{Q}q_2]_{p_2}$ (the sign choice is the one for which $\tilde{Q}+\tilde{R}=kp_2$). In terms of $k$, the adjunction formula \eqref{eq:adjcyl} and virtual dimension formula \eqref{eq:vdim2} become
  \begin{align}
    \label{eq:adjpen} D^2-3p_1p_2D+p_2^2&=\left(\tilde{Q}(kp_2-\tilde{Q})-kp_2\right)p_1^2,\\
    \label{eq:vlast} p_1k&=3D+p_1p_2-p_2q_1,
  \end{align}
  so
  \begin{align}
    \label{eq:adjlast} D^2+(p_1\tilde{Q})^2+p_2^2&=3p_1p_2D+p_1p_2(p_1k)(\tilde{Q}-1)\\
    \nonumber &=3(p_1\tilde{Q})p_2D+p_1(p_1-q_1)p_2^2(\tilde{Q}-1).
  \end{align}
  We know that $D<p_2$ and $\tilde{Q}<2p_2/p_1$, so
  \[D^2+(p_1\tilde{Q})^2<p_2D+2p_2p_1\tilde{Q}<3(p_1\tilde{Q})p_2D.\]
  Therefore,
  \[p_2^2>p_1(p_1-q_1)p_2^2(\tilde{Q}-1).\]
  However, the right-hand side is strictly bigger than $p_2^2$ unless $\tilde{Q}=1$. Thus we deduce that $\frac{p_2^2}{d_z}Q=1$, so $d_z=p_2^2$ and $Q=1$. The adjunction formula \eqref{eq:adjlast} now gives
  \[D^2+p_1^2+p_2^2=3p_1p_2D,\]
  as required.

  The equalities $3D=\pm p_1q_2\mod p_2$ and $3D=\pm p_2q_1\mod p_1$ follow from the virtual dimension formula \eqref{eq:vlast} (recall from Remark \ref{rmk:signamb} that $q_i$ is only uniquely determined up to a sign modulo $p_i$).
\end{proof}

\begin{rmk}\label{rmk:unicity}
  Note that, for any given Markov number $p$ and for any $q$, there is at most one Markov triple $(a,b,p)$ with $a,b<p$ and $aq=\pm 3b\mod p$. See {\cite[Proposition 3.15]{Aigner13}}.
\end{rmk}

\begin{thm}\label{thm:threeball}
  Suppose that $N=3$. Then $(p_1,p_2,p_3)$ is a Markov triple.
\end{thm}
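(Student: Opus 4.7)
The plan is to apply Theorem \ref{thm:twoball} to the two pairs $(B_1, B_3)$ and $(B_2, B_3)$ and then invoke the uniqueness statement in Remark \ref{rmk:unicity} to force the two resulting Markov triples to share the same unordered pair of smaller entries.

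First I would apply Theorem \ref{thm:twoball} to each of $(B_1, B_3)$ and $(B_2, B_3)$; each is a disjoint pair of symplectic rational homology balls in $\cp{2}$ and therefore satisfies the hypotheses of the $N=2$ theorem. This produces Markov triples $(p_1, p_3, c_{13})$ and $(p_2, p_3, c_{23})$ together with
\[
c_{13}\,q_3 \equiv \pm 3 p_1 \pmod{p_3}, \qquad c_{23}\,q_3 \equiv \pm 3 p_2 \pmod{p_3}.
\]
Moreover, Lemma \ref{lma:cyl}(B), applied in each of these $N=2$ setups with $p_3$ playing the role of the larger orbifold point, yields $c_{13}, c_{23} < p_3/3 < p_3$, so $p_3$ is the largest entry of both Markov triples.

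Next I would invoke Remark \ref{rmk:unicity}: for the Markov number $p_3$ and the residue $q_3$, there is at most one unordered pair $\{a,b\}$ with $a,b < p_3$ such that $(a,b,p_3)$ is a Markov triple and $aq_3 \equiv \pm 3b \pmod{p_3}$. Both $\{c_{13}, p_1\}$ and $\{c_{23}, p_2\}$ satisfy these hypotheses, so they coincide as unordered pairs. Since $p_1 < p_2$ by our standing convention, the only possible matching is $c_{13} = p_2$ and $c_{23} = p_1$, and substituting into either Markov equation gives $p_1^2 + p_2^2 + p_3^2 = 3 p_1 p_2 p_3$, which is the desired conclusion.

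The congruences $q_i = \pm 3 p_j/p_k \mod p_i$ asserted in Theorem \ref{thm:markov}(C) then follow by substituting the identifications $c_{13} = p_2$ and $c_{23} = p_1$ into the formulas from Theorem \ref{thm:twoball} (giving the $\mod p_3$ congruences), together with the analogous application of Theorem \ref{thm:twoball} to the pair $(B_1, B_2)$ (giving the $\mod p_1$ and $\mod p_2$ congruences). The only step genuinely requiring verification is the bound $c_{13}, c_{23} < p_3$ that makes Remark \ref{rmk:unicity} applicable; this is immediate from Lemma \ref{lma:cyl}(B), so once the hard geometric work done in Theorems \ref{thm:oneball} and \ref{thm:twoball} is in place, the proof reduces to a short combinatorial consequence of the uniqueness of Markov triples with prescribed largest entry and characteristic number.
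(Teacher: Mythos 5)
Your proposal is correct and follows essentially the same route as the paper: apply Theorem \ref{thm:twoball} to the two pairs containing $B_3$, obtain two Markov triples with largest entry $p_3$ and matching characteristic-number condition modulo $p_3$, and invoke Remark \ref{rmk:unicity} to identify them (forcing $c_{13}=p_2$, $c_{23}=p_1$ since $p_1<p_2$). The only cosmetic difference is that you phrase the congruences in the equivalent form $c\,q_3\equiv\pm 3p_i\bmod p_3$ rather than $p_i q_3\equiv\pm 3c\bmod p_3$, which are interchangeable via $q_3^2\equiv-9\bmod p_3$.
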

\begin{proof}
  Suppose without loss of generality that $p_3>p_1,p_2$. By Theorem \ref{thm:twoball}, there exist $b_1<p_3$ and $b_2<p_3$ such that
  \begin{align*}
    b_1^2+p_2^2+p_3^2&=3b_1p_2p_3,\\
    p_1^2+b_2^2+p_3^2&=3p_1b_2p_3,\\
    p_2q_3&=\pm 3b_1\mod p_3,\\
    p_1q_3&=\pm 3b_2\mod p_3.
  \end{align*}
  By Remark \ref{rmk:unicity}, this implies that $b_1=p_1$ and $b_2=p_2$. Hence, $(p_1,p_2,p_3)$ is a Markov triple.
\end{proof}

\begin{cor}\label{cor:fourball}
  We have $N\leq 3$.
\end{cor}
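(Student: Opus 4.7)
My plan is to argue by contradiction. Suppose $N\geq 4$; by discarding extra balls if necessary I may assume $N=4$, and by the standing notation the numbers satisfy $p_1<p_2<p_3<p_4$. The first observation is that any three of four pairwise disjoint Lagrangian pinwheels still form a configuration to which Theorem \ref{thm:threeball} applies, so every one of the four triples
\[
(p_1,p_2,p_3),\ (p_1,p_2,p_4),\ (p_1,p_3,p_4),\ (p_2,p_3,p_4)
\]
must be a Markov triple.

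I would then exploit the arithmetic of Markov triples through Vieta's formulas: given fixed positive integers $a,b$, any $c$ for which $(a,b,c)$ is a Markov triple is a root of the quadratic $c^2-3ab\,c+(a^2+b^2)=0$, which has at most two roots. Applied to the pair $(p_1,p_2)$, this forces $p_3,p_4$ to be precisely those two roots, yielding the symmetric-function identities
\[
p_3+p_4=3p_1p_2,\qquad p_3p_4=p_1^2+p_2^2.
\]
(This is the standard observation behind the Markov mutation $p_3\mapsto 3p_1p_2-p_3$ used in the introduction.)

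Next I would substitute these identities into the Markov equation $p_1^2+p_3^2+p_4^2=3p_1p_3p_4$ coming from the triple $(p_1,p_3,p_4)$; using $p_3^2+p_4^2=(p_3+p_4)^2-2p_3p_4=9p_1^2p_2^2-2p_1^2-2p_2^2$ and rearranging yields $p_2^2(9p_1^2-3p_1-2)=p_1^2(3p_1+1)$. Noting the factorisation $9p_1^2-3p_1-2=(3p_1-2)(3p_1+1)$ and cancelling the common factor $3p_1+1$, this reduces to the single Diophantine constraint
\[
(3p_1-2)\,p_2^2=p_1^2.
\]

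The final step is to rule out positive integer solutions with $p_1<p_2$. If $p_1=1$ then the equation forces $p_2^2=1$, contradicting $p_1<p_2$; if $p_1\geq 2$ then $3p_1-2\geq 4$, which forces $p_2^2\leq p_1^2/4$ and hence $p_2<p_1$, again a contradiction. This contradicts the assumption $N\geq 4$ and completes the argument. The main obstacle here is essentially clerical rather than conceptual: all the substantive geometry has already been absorbed into Theorem \ref{thm:threeball}, and the only delicate step is verifying that the four Markov-triple constraints collapse (via Vieta) into a single unsolvable Diophantine equation rather than an infinite family to be ruled out case-by-case.
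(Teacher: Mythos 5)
Your argument is correct, and it takes a genuinely different (and more self-contained) route than the paper's own proof. Both proofs start from the observation that all four triples from $\{p_1,p_2,p_3,p_4\}$ must be Markov, and both rely on the Vieta observation that for fixed $a,b$ there are at most two values of $c$ with $(a,b,c)$ Markov, related by $c+c'=3ab$. But the paper then invokes a structural theorem: the graph of Markov triples under mutation is a tree (cited from Aigner), so the four triples --- which are pairwise joined by mutation edges --- would force a $K_4$ inside a tree, a contradiction. You instead bypass the tree theorem entirely: you use Vieta on $(p_1,p_2,\cdot)$ to solve $p_3+p_4=3p_1p_2$, $p_3p_4=p_1^2+p_2^2$, substitute into the Markov equation for $(p_1,p_3,p_4)$, and after cancelling the common factor $3p_1+1$ arrive at $(3p_1-2)p_2^2=p_1^2$, which has no solutions compatible with $p_1<p_2$ (and in fact only uses three of the four Markov constraints). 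Your route is more elementary, avoiding the acyclicity of the Markov mutation graph, at the cost of a few lines of explicit algebra; the paper's is shorter on the page but leans on a deeper external fact. Both are valid proofs.
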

\begin{proof}
  If $(a,b,c)$ and $(a,b,c')$ are both Markov triples then $c'=3ab-c$ and we say that $(a,b,c)$ and $(a,b,c')$ are related by a mutation. The graph whose vertices are Markov triples and whose edges are mutations forms a tree called the {\em Markov tree} (see for example {\cite[Theorem 3.3]{Aigner13}} for a proof that this is a tree). Suppose $N\geq 4$. Then, by Theorem \ref{thm:threeball}, $(p_i,p_j,p_k)$ is a Markov triple for any $\{i,j,k\}\subset\{1,2,3,4\}$. This would imply that the Markov tree contains a copy of the complete graph on four vertices, which is a contradiction.
\end{proof}

\subsection*{Acknowledgements}

The authors would like to thank Georgios Dimitroglou Rizell, Paul Hacking, Yank{\i} Lekili, Oscar Randal-Williams and Chris Wendl for useful conversations and comments, and to thank Renato Vianna, Emily Maw and an anonymous referee for their many detailed corrections to earlier versions of this manuscript.

I.S. is partially supported by a Fellowship from the EPSRC.

\bibliographystyle{plain}
\bibliography{pinwheel}

\end{document}